\newtheorem{thm}{Theorem}[]
\newtheorem{lem}[thm]{Lemma}
\newtheorem{cor}[thm]{Corollary}
\newtheorem{defn}[thm]{Definition}
\newtheorem{statement}[thm]{Lemma}
\newcommand{\R}{\ensuremath{\mathbb{R}\xspace}}
\newcommand{\E}{\ensuremath{\mathbb{E}\xspace}}
\newcommand{\set}[1]{\ensuremath{\left\{#1\right\}\xspace}}
\newcommand{\st}{\ensuremath{\;\big|\;\xspace}}
\newcommand{\Lip}{\text{Lip}}
\newcommand{\M}{\mathcal{M}}
\newcommand{\Mput}{\mathcal{M}_\mathrm{put}}
\newcommand{\abs}[1]{\left\lvert#1\right\rvert}
\newcommand{\norm}[1]{\left\lVert#1\right\rVert}
\newcommand{\taubarhat}{\widehat{\widebar{\tau}}}
\newcommand{\tauhat}{\widehat{\tau}}
\let\originalleft\left
\let\originalright\right
\renewcommand{\left}{\mathopen{}\mathclose\bgroup\originalleft}
\renewcommand{\right}{\aftergroup\egroup\originalright}
\begin{document}

\title{Manifold Learning Using Kernel Density Estimation and Local Principal Components Analysis}

\author{\name Kitty Mohammed \email kittymohammed1985@gmail.com\\
\addr Department of Statistics\\
University of Washington\\
Seattle, WA 98195-4322 USA
\AND
\name Hariharan Narayanan \email hariharan.narayanan@tifr.res.in\\
\addr School of Technology and Computer Science\\
Tata Institute of Fundamental Research\\
Mumbai, Maharashtra 400 005 India}

\maketitle

\begin{abstract}%
We consider the problem of recovering a $d-$dimensional manifold $\mathcal{M} \subset \R^n$ when provided with noiseless samples from $\mathcal{M}$. There are many algorithms (e.g., Isomap) that are used in practice to fit manifolds and thus reduce the dimensionality of a given data set. Ideally, the estimate $\Mput$ of $\mathcal{M}$ should be an actual manifold of a certain smoothness; furthermore, $\Mput$ should be arbitrarily close to $\M$ in Hausdorff distance given a large enough sample. Generally speaking, existing manifold learning algorithms do not meet these criteria. \citet*{fefferman2016testing} have developed an algorithm whose output is provably a manifold. The key idea is to define an approximate squared-distance function (asdf) to $\mathcal{M}$. Then, $\Mput$ is given by the set of points where the gradient of the asdf is orthogonal to the subspace spanned by the largest $n - d$ eigenvectors of the Hessian of the asdf. As long as the asdf meets certain regularity conditions, $\Mput$ is a manifold that is arbitrarily close in Hausdorff distance to $\M$. In this paper, we define two asdfs that can be calculated from the data and show that they meet the required regularity conditions. The first asdf is based on kernel density estimation, and the second is based on estimation of tangent spaces using local principal components analysis. 
\end{abstract}

\begin{keywords}
manifold learning, KDE, local PCA, ridges
\end{keywords}

\section{Introduction}

It is often the case that high-dimensional data sets have lower-dimensional structure taking the form of a manifold. Manifold learning consists of algorithms that take a high-dimensional data set as input and output a fit of the manifold structure. Many of these algorithms (such as Isomap, Laplacian eigenmaps, locally linear embedding, etc.) are used in practice and have a theoretical literature supporting them. \citet*{ma2011manifold} give a concise overview of these methods.

A drawback of most manifold learning algorithms is that if we are given data from a manifold, their output is not an actual manifold that is close to the original manifold. \citet*{fefferman2016testing} develop an algorithm whose output is provably a manifold of certain smoothness. They start by defining an approximate squared-distance function (asdf) from the data in a manner that uses exhaustive search, utilizing the data only indirectly. Thus, a very large number of potential asdfs are examined before an approximately optimal one is chosen. In this paper, we do away with the exhaustive search, albeit in the specific case of noiseless data that is sampled uniformly from a manifold. \citet{fefferman2016testing} prove a key theorem that states that as long as we are able to define an asdf meeting certain general conditions, their algorithm outputs a set that is a manifold with bounded smoothness and Hausdorff distance to the original manifold. We demonstrate two different methods of estimating the true manifold via asdfs that can be calculated from the data. The two asdfs in our paper are based on 1) kernel density estimation, and 2) approximating the manifold using tangent planes which are in turn approximated with local principal components analysis (PCA).

\cite{ozertem2011locally} learn manifolds by forming a kernel density estimator (KDE) from the data points and finding its $d$-dimensional ridges. We give a more precise definition later, but a ridge is essentially a higher-dimensional analog of the mode and is related to the output set from the algorithm of \citet{fefferman2016testing}. \cite{ozertem2011locally} give a practical method for finding the ridges through a variant of gradient descent where the descent is constrained to the subspace spanned by the largest eigenvectors of the Hessian of the KDE. We state their algorithm in Section \ref{simulations} of our paper and use it to produce simulation results. Although they only apply subspace-constrained gradient descent to find ridges of the KDE, the method is more general and can be used to find ridges of both of our asdfs.

\subsection{Related work}

Manifold learning has existed as an area of statistics and machine learning since the early 2000s. Some classical manifold learning algorithms are Isomap \citep*{tenenbaum2000global}, locally linear embedding \citep*{roweis2000nonlinear}, and Laplacian eigenmaps \citep*{belkin2003laplacian}. Many of these early algorithms rely on spectral graph theory and start off by constructing a graph which is then used to produce a lower-dimensional embedding of the data set. The theoretical guarantees are centered around proving that asymptotically, certain values such as the geodesic distance can be approximated to arbitrary precision. 

More recently, there have been quite a few papers combining ridge estimation with manifold learning (including the work of \citealp*{ozertem2011locally}).  Some early results on ridge estimation are due to \citet*{eberly1996ridges}, \citet*{hall1992ridge}, and \citet*{cheng2004estimating}. Ridge sets can be constructed to estimate a probability density or an embedded submanifold. Theoretical guarantees in this setting have been given by \citet*{genovese2012manifold}, \citet*{genovese2012minimax}, \citet*{genovese2014nonparametric}, and \citet*{chen2015asymptotic}. Of these, the most relevant results for us are from \citet{genovese2014nonparametric}. They prove that as the sample size goes to infinity, their ridge set gets arbitrarily close to an underlying manifold in Hausdorff distance. \cite{fefferman2016testing} also define a procedure related to ridge estimation methods that can be used to estimate an underlying manifold. For our purposes, the major advances of their work are twofold. First, their method is general; as long as a function meets a few conditions, it can be used to define an estimator that can be made arbitrarily close to an underlying manifold in Hausdorff distance. Furthermore, they show that this estimator is itself a manifold with bounded reach (which measures how rough a submanifold can be). Second, their proofs rely on using the implicit function theorem concretely, allowing them to make quantitative statements about the bounds of interest.

\subsection{Outline}

Section \ref{techbkgrd} contains the technical background required to read our main results and proofs. This section starts off with background material on submanifolds, including key definitions, conventions regarding coordinates and projection operators, and important geometric results. Section \ref{sec:assumptions} contains the model assumptions. Section \ref{sec:fmn} summarizes the major theorems we use from \cite{fefferman2016testing}. In Section \ref{ridge}, we summarize the algorithm from \cite{ozertem2011locally} that we use to actually compute the putative manifold. Section \ref{empproc} lists a few key concepts from empirical process theory. We include these because a few of our proofs are simpler when we work in the continuous setting and then argue that a similar result holds for a finite sample from the manifold. Sections \ref{kde} and \ref{localpca} contain the main results of our paper. We provide the precise definition of the asdfs and prove that they do indeed meet the conditions required to apply the theorems contained in Section \ref{sec:fmn}. Finally, we end with simulation results in Section \ref{simulations} and a brief discussion in Section \ref{discussion}.

\section{Technical background and assumptions} \label{techbkgrd}

We now provide the definitions and major theorems that we rely on in the rest of the paper. The results we use the most often are Theorems \ref{federersreach}, \ref{c11fctmanifold}, \ref{thm13fmn}, and \ref{putativemanifold2}; the rest of this section can be referred to as necessary.

\subsection{Manifolds} \label{sec:tech}
This subsection is adapted from \cite{fefferman2016testing}. In the current paper, we use the terms manifold and submanifold interchangeably with compact imbedded $d$-manifold. A closed subset $\mathcal{M} \subset \R^n$ is a \textit{compact imbedded} $d$-\textit{manifold} if the following conditions hold. First, $\mathcal{M}$ is compact. Next, there exists $r_1 > r_2 > 0$ such that for every $z \in \mathcal{M}$ there exists a $d-$dimensional subspace $T_z\mathcal{M}$ of $\R^n$ such that $\mathcal{M} \cap B(z, r_2) = \Gamma \cap B(z, r_2)$ for a patch $\Gamma$ over $T_z\mathcal{M}$ of radius $r_1$, centered at $z$ and tangent to $T_z\mathcal{M}$ at $z$. A \textit{patch} of radius $r$ over $T_z \mathcal{M}$ is a subset $\Gamma := \set{x + \Psi(x)\,|\,x \in B_d(r) \subset T_z \mathcal{M}}$ of $\R^n$ where $\Psi(x): B_d(r) \rightarrow T_z^{\perp} \mathcal{M}$ is a $C^2$-function that is zero at the origin.

The tangent space can be defined in the usual way (corresponding to $T_z\mathcal{M}$) or by using the following definition which applies to arbitrary closed sets $A \subset \R^n$. At a point $a \in A$, $\mathrm{Tan}^0(a, A)$ is the set of vectors $v$ such that for all $\epsilon > 0$, there exists $b \in A$ such that $0 < \abs{a - b} < \epsilon$ and $\abs{v/\abs{v} - \frac{b - a}{\abs{b - a}}} < \epsilon$. Let the \textit{tangent space} $\mathrm{Tan}(a, A)$ be the set of all $x$ such that $x - a \in \mathrm{Tan}^0(a, A)$.

The geometric quantities of a submanifold $\mathcal{M}$ that we are most concerned with are the $d$-dimensional volume $V$ and the \textit{reach} $\tau$. The reach is the largest number such that all points within $\tau$ of $\mathcal{M}$ have a unique closest point on $\mathcal{M}$. Intuitively, the reach governs how ``rough" an embedded submanifold is. For example, the reach of a line with a sharp cusp is zero, and the reach of a linear subspace is infinite. 

The following theorem due to \cite{federer1959curvature} is useful for bounding the distance from a point on a manifold to the tangent space at a nearby point.
\begin{thm} [Federer's reach condition]
	\label{federersreach}
	Let $\mathcal{M}$ be an embedded submanifold of $\R^n$. Then
	\begin{align*}
	\mathrm{reach}(\mathcal{M})^{-1} = \sup \set{2 \|b - a\|^{-2} \|b - \Pi_a b\| \st a, b \in \mathcal{M}, a \neq b}.
	\end{align*}
\end{thm}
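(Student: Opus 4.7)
Let me denote the right-hand supremum by $S$ and prove the two inequalities $S \le \mathrm{reach}(\mathcal{M})^{-1}$ and $\mathrm{reach}(\mathcal{M})^{-1} \le S$ separately. Throughout, I take $\Pi_a b$ to mean the orthogonal projection of $b$ onto the affine tangent plane $a + T_a\mathcal{M}$, so $b - \Pi_a b$ is the component of $b - a$ normal to $T_a\mathcal{M}$.

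For $S \le \mathrm{reach}(\mathcal{M})^{-1}$, fix $\rho < \mathrm{reach}(\mathcal{M})$, $a \in \mathcal{M}$, and a unit vector $\nu \in T_a^{\perp}\mathcal{M}$, and set $c := a + \rho\nu$. The plan is to argue that $a$ is the unique nearest point of $\mathcal{M}$ to $c$; granted this, every $b \in \mathcal{M}$ satisfies $\|b-c\|^2 \ge \rho^2$, and expanding $\|b-c\|^2 = \|b-a\|^2 - 2\rho\langle b-a,\nu\rangle + \rho^2$ yields $\langle b-a,\nu\rangle \le \|b-a\|^2/(2\rho)$. Choosing $\nu = (b - \Pi_a b)/\|b - \Pi_a b\|$ (which lies in $T_a^{\perp}\mathcal{M}$) maximizes this inner product to $\|b - \Pi_a b\|$, so $\|b - \Pi_a b\| \le \|b-a\|^2/(2\rho)$; letting $\rho \uparrow \mathrm{reach}(\mathcal{M})$ and taking the supremum over $a \ne b$ delivers the desired bound.

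For the reverse inequality, I will show $\mathrm{reach}(\mathcal{M}) \ge 1/S$ by verifying that every $x \in \R^n$ with $d := d(x, \mathcal{M}) < 1/S$ has a unique nearest point on $\mathcal{M}$. Pick any nearest point $a$; the first-order optimality condition gives $x - a \in T_a^{\perp}\mathcal{M}$, so $\langle x-a, \Pi_a b - a\rangle = 0$ for any $b \in \mathcal{M}$ and hence $\langle x-a, b-a\rangle = \langle x-a, b - \Pi_a b\rangle \le d\cdot (S/2)\|b-a\|^2$ by Cauchy--Schwarz and the definition of $S$. Substituting into $\|x-b\|^2 = d^2 + \|b-a\|^2 - 2\langle x-a, b-a\rangle$ produces
\[
\|x-b\|^2 \;\ge\; d^2 + \|b-a\|^2(1 - dS) \;>\; d^2 \quad \text{for all } b \ne a,
\]
so $a$ is the unique nearest point and $\mathrm{reach}(\mathcal{M}) \ge 1/S$.

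The main obstacle is the unsubstantiated claim in the first part: that $a$ is the nearest point to $c = a + \rho\nu$ for \emph{all} $\rho \in [0, \mathrm{reach}(\mathcal{M}))$, not just for $\rho$ small. I would handle this by a continuity argument. Using the $C^2$ patch $\Psi$ from the manifold definition in Section \ref{sec:tech}, a Hessian computation at the origin of the patch (exploiting $\Psi(0) = 0$ and $D\Psi(0) = 0$) shows that $a$ is a strict local minimizer of $b \mapsto \|b - c\|^2$ on $\mathcal{M}$ for small $\rho$; points of $\mathcal{M}$ outside a fixed patch are uniformly bounded away from $a$, so $a$ is in fact the global nearest point for small $\rho$. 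For $\rho < \mathrm{reach}(\mathcal{M})$ the nearest-point projection is well-defined and continuous on the open tubular neighborhood of width $\mathrm{reach}(\mathcal{M})$, so the nearest point to $c = a + \rho\nu$ depends continuously on $\rho$ along this ray; continuity together with the small-$\rho$ conclusion forces the nearest point to equal $a$ throughout the connected interval $[0, \mathrm{reach}(\mathcal{M}))$.
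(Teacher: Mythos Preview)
The paper does not prove this statement at all; it is quoted as a known result of Federer and used as a black box throughout. So there is no paper-side argument to compare against, and your attempt must be judged on its own.

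Your second inequality ($\mathrm{reach}(\mathcal{M}) \ge 1/S$) is clean and correct.

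The first inequality has a real gap, and you have located it yourself but not closed it. Your proposed fix is: (i) for small $\rho$, a Hessian computation plus a far-points bound shows $a$ is the global nearest point to $a+\rho\nu$; (ii) $\rho\mapsto\pi(a+\rho\nu)$ is continuous on $[0,\mathrm{reach})$; (iii) hence $\pi(a+\rho\nu)=a$ throughout. Step (iii) does \emph{not} follow from (i) and (ii): a continuous map that equals $a$ on an initial subinterval can perfectly well drift away from $a$ later. What you need is a connectedness (open--closed) argument on the set $A=\{\rho:\pi(a+\rho\nu)=a\}$, and continuity of $\pi$ gives only closedness. Openness is the substantive part: if $\pi(a+\rho_0\nu)=a$ with $\rho_0$ possibly large, you must show $a$ remains the strict global minimizer for $\rho$ slightly beyond $\rho_0$. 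Your Hessian computation at $a$ gives positive definiteness of $2I - 2\rho_0\,\mathrm{II}_\nu$ only when $\rho_0$ is below the reciprocal of the largest principal curvature in direction $\nu$; knowing that this threshold is at least $\mathrm{reach}(\mathcal{M})$ is itself a reach--curvature inequality that requires proof, so invoking it here is circular or at least unsupported.

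A standard way to finish is to work instead with $I=\{\rho\ge 0: d(a+\rho\nu,\mathcal{M})=\rho\}$, which is a closed down-set by the triangle inequality (if $\rho\in I$ and $\rho'<\rho$ then $d(a+\rho'\nu,\mathcal{M})\ge \rho-(\rho-\rho')=\rho'$), hence an interval $[0,R]$. If $R<\mathrm{reach}$, pick $\rho_n\downarrow R$ with nearest points $b_n\neq a$; any subsequential limit $b^*$ satisfies $\|b^*-(a+R\nu)\|=R$, so $b^*$ is a second nearest point to $a+R\nu$ unless $b^*=a$. Ruling out $b^*=a$ still takes a short local argument, but one that does not need the Hessian to be definite --- only that $a$ is a critical point with no other critical points in a fixed small neighborhood, which contradicts $b_n\to a$, $b_n\neq a$, $b_n$ critical. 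This is essentially Federer's Theorem~4.8(12), and it is where the actual content lives; bare continuity of $\pi$ is not enough.
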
 

In this paper, we assume regularity conditions on the manifold we draw samples from. We assume it is in $\mathcal{G}$, where $\mathcal{G} = \mathcal{G}(d, V_{max}, \tau_{min})$ is the family of boundaryless $C^2$-submanifolds of the unit ball of $\R^n$ with dimension $d$, volume less than or equal to $V_{max}$, and reach at least $\tau_{min}$. 

Let the \textit{tubular neighborhood} $\mathcal{M}_{\widebar{\tau}}$ be the set of all points within a distance of $\widebar{\tau}$ of $\mathcal{M}$. Now, for points $z \in \mathcal{M}$ and $y \in \mathcal{M}_{\widebar{\tau}}$, denote the projection onto the tangent plane at $z$ by
\begin{align*}
	\Pi_z: \R^n \rightarrow T_z \mathcal{M}.
\end{align*}
A number of our proofs rely on defining the following sets:
\begin{align*}
U_{\widebar{\tau}}^z & := \set{y \st \norm{y - \Pi_z (y)} \leq \widebar{\tau}} \cap \set{y \st \norm{z - \Pi_z (y)} \leq \widebar{\tau}}\\
\widetilde{\mathcal{A}}_{z, \widebar{\tau}} & := U_{\widebar{\tau}}^z \cap \mathcal{M}\\
\mathcal{A}_{z, \widebar{\tau}} & := U_{\widebar{\tau}}^z \cap T_z \mathcal{M}.
\end{align*}
$U_{\widebar{\tau}}^z$ is a cylinder centered at $z$, and $\widetilde{\mathcal{A}}_{z, \widebar{\tau}}$ and $\mathcal{A}_{z, \widebar{\tau}}$ are nearby regions of the manifold and tangent space, respectively. $\mathcal{A}_{z, \widebar{\tau}}$ can also be defined as the projection of the cylinder onto the tangent space; i.e., as $\Pi_z \left(U_{\widebar{\tau}}^z\right)$. These sets are especially useful because, as long as $\widebar{\tau} < \tau$, they allow us to work with a local parametrization of the manifold. As mentioned earlier, manifolds can be defined locally as functions from the tangent space to the normal space. The functions we are working with are in the class $C^{1, 1}$; i.e., they are once continuously differentiable and have a Lipschitz gradient. This is summarized in the next theorem.

\begin{thm}
	\label{c11fctmanifold}
	Let $\mathcal{M} \in \mathcal{G}(d, V_{max}, \tau_{min})$. Let $z \in \mathcal{M}$ and $y \in \mathcal{M}_{\widebar{\tau}}$. When $\widebar{\tau}$ is sufficiently small, there exists a $C^{1, 1}$ function
	\begin{align*}
		F_{z, U_{\widebar{\tau}}^x}: \mathcal{A}_{z, \widebar{\tau}} \rightarrow \Pi_z^{-1}\left(\Pi_z(0)\right)
	\end{align*}
	such that
	\begin{align*}
		\set{y + F_{z, U_{\widebar{\tau}}^z}(y) \st y \in \mathcal{A}_{z, \widebar{\tau}}} = \widetilde{\mathcal{A}}_{z, \widebar{\tau}}.
	\end{align*}
	Additionally, there exists a constant $C$ such that $\mathrm{Lip}(\nabla F_{z, U_{\widebar{\tau}}^z}) \leq C/\tau$.
\end{thm}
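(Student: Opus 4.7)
The plan is to construct $F_{z,U_{\bar\tau}^z}$ by inverting $\Pi_z$ on the slab $\widetilde{\mathcal{A}}_{z,\bar\tau}$, so the proof breaks into three stages: well-definedness of the graph parametrization, $C^{1,1}$ regularity, and the quantitative $C/\tau$ bound.

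First I would fix $\bar\tau$ small (say $\bar\tau$ less than a small fixed fraction of $\tau \ge \tau_{\min}$) and use Federer's reach condition (Theorem \ref{federersreach}) to show that $\Pi_z\colon \widetilde{\mathcal{A}}_{z,\bar\tau}\to \mathcal{A}_{z,\bar\tau}$ is a bijection. For surjectivity, given $x\in \mathcal{A}_{z,\bar\tau}$, the fibre $\Pi_z^{-1}(x)$ is an affine normal flat meeting $\mathcal{M}$; a continuity/compactness argument combined with the reach bound $\|b-\Pi_a b\|\le \|b-a\|^2/(2\tau)$ shows a unique intersection lies inside the cylinder. For injectivity, if $y_1,y_2\in\widetilde{\mathcal{A}}_{z,\bar\tau}$ satisfy $\Pi_z y_1=\Pi_z y_2$, then $y_1-y_2\in T_z^{\perp}\mathcal{M}$; applying Federer's condition at both $y_1$ and $y_2$ forces $\|y_1-y_2\|\le \|y_1-y_2\|^2/\tau$, hence $y_1=y_2$ once $\bar\tau$ is small. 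Then define
\begin{align*}
F_{z,U_{\bar\tau}^z}(x) := (\Pi_z|_{\widetilde{\mathcal{A}}_{z,\bar\tau}})^{-1}(x) - x \in T_z^{\perp}\mathcal{M}, \qquad x\in \mathcal{A}_{z,\bar\tau}.
\end{align*}

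Next I would upgrade this parametrization to $C^{1,1}$. Because $\mathcal{M}\in\mathcal{G}$ is $C^2$, each point $y$ near $z$ has its own local graph $\Phi_y$ over $T_y\mathcal{M}$, and the implicit function theorem applied to the smooth map $(x,\eta)\mapsto \Pi_z(x+\Phi_y(\eta))$ shows that our inverse map, and hence $F_{z,U_{\bar\tau}^z}$, is $C^1$ with
\begin{align*}
DF_{z,U_{\bar\tau}^z}(x) = \Pi_z^{\perp}\circ (\Pi_z|_{T_y\mathcal{M}})^{-1},\qquad y=x+F(x),
\end{align*}
where $\Pi_z^{\perp}$ is the orthogonal projection onto $T_z^{\perp}\mathcal{M}$. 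Since Federer's condition controls the angle between $T_y\mathcal{M}$ and $T_z\mathcal{M}$ by $O(\|y-z\|/\tau)$, the operator $\Pi_z|_{T_y\mathcal{M}}$ is invertible with bounded inverse on the cylinder once $\bar\tau$ is small, so $DF$ is well defined and continuous.

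The main obstacle is the quantitative Lipschitz estimate $\mathrm{Lip}(\nabla F_{z,U_{\bar\tau}^z})\le C/\tau$. I would derive it from the fact that reach $\ge\tau$ implies the second fundamental form of $\mathcal{M}$ has operator norm at most $1/\tau$ (a standard corollary of Theorem \ref{federersreach}: differentiating the inequality $\|b-\Pi_a b\|\le \|b-a\|^2/(2\tau)$ twice at $a=b$ yields the bound on the second fundamental form). At the base point $z$ we have $\nabla F(0)=0$, so $\nabla^2 F(0)$ is literally the second fundamental form at $z$ along $T_z\mathcal{M}$, giving $\|\nabla^2 F(0)\|\le 1/\tau$. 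For $x\neq 0$ the formula for $DF(x)$ above, differentiated once more, expresses $\nabla^2 F(x)$ as a composition of the second fundamental form at the corresponding $y\in\mathcal{M}$ with the uniformly bounded operators $(\Pi_z|_{T_y\mathcal{M}})^{-1}$ and $\Pi_z^{\perp}$; taking operator norms yields $\|\nabla^2 F(x)\|\le C/\tau$ throughout $\mathcal{A}_{z,\bar\tau}$, which by integration along line segments gives the claimed Lipschitz bound on $\nabla F$. The only delicacy is verifying that the multiplicative constants coming from $(\Pi_z|_{T_y\mathcal{M}})^{-1}$ remain $O(1)$ uniformly on $\mathcal{A}_{z,\bar\tau}$, which is a direct consequence of choosing $\bar\tau$ a sufficiently small multiple of $\tau_{\min}$.
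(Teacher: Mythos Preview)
The paper does not supply a proof of this theorem. It appears in Section~\ref{sec:tech} as part of the technical background, stated without attribution to a specific source and without argument; the authors simply use it as a known fact about manifolds with lower-bounded reach. So there is no ``paper's own proof'' to compare your proposal against.

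That said, your sketch is the standard route to this result and is essentially correct. A couple of small points are worth tightening. In the injectivity step you write that $\Pi_z y_1=\Pi_z y_2$ implies $y_1-y_2\in T_z^{\perp}\mathcal{M}$ and then invoke Federer at $y_1,y_2$; but Theorem~\ref{federersreach} controls the component of $y_2-y_1$ normal to $T_{y_1}\mathcal{M}$, not to $T_z\mathcal{M}$. You need the intermediate angle bound between $T_z\mathcal{M}$ and $T_{y_1}\mathcal{M}$ (which you invoke later anyway) to close this loop. Similarly, the surjectivity ``continuity/compactness'' sentence deserves one more line: one typically argues that the normal fibre over $x\in\mathcal{A}_{z,\bar\tau}$ meets $\mathcal{M}$ by a degree or connectedness argument starting from $x=z$, using that the local graph cannot exit the cylinder through its lateral boundary when $\bar\tau\ll\tau$. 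With those two clarifications your argument goes through and yields the stated $C/\tau$ Lipschitz bound on $\nabla F_{z,U_{\bar\tau}^z}$.
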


The next theorem is from \cite{krantz2012implicit}. It states that $\mathcal{M}$ has positive reach as long as it is embedded in a Euclidean space with strictly higher dimension.
\begin{thm}
	\label{positivereach}
	Let $\mathcal{M}$ be a $d$-dimensional $C^2-$submanifold of $\R^n$. If $n > d$, then $\mathcal{M}$ has positive reach.
\end{thm}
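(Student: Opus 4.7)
The plan is to invoke Federer's reach condition (Theorem \ref{federersreach}) and show that the supremum appearing there is finite. Equivalently, I will produce a constant $K<\infty$ such that
\begin{align*}
\|b-\Pi_a b\|\;\leq\;K\,\|b-a\|^2\qquad\text{for all distinct }a,b\in\mathcal{M},
\end{align*}
which immediately yields $\mathrm{reach}(\mathcal{M})\geq 1/(2K)>0$. I split the analysis into a nearby regime ($\|a-b\|<\delta$) and a far regime ($\|a-b\|\geq\delta$), where $\delta>0$ will be chosen uniformly at the end.

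The far regime is trivial: since $\Pi_a$ is the orthogonal projection onto the affine tangent plane through $a$, $\|b-\Pi_a b\|$ is just the distance from $b$ to that plane and is $\leq\|b-a\|$; hence $\|b-\Pi_a b\|/\|b-a\|^2\leq 1/\delta$ whenever $\|b-a\|\geq\delta$. For the nearby regime I use the assumption $n>d$ together with the $C^2$ hypothesis to realize $\mathcal{M}$ locally as a graph over its tangent space. Following the patch description in Section \ref{sec:tech}, for each $a\in\mathcal{M}$ there is a neighborhood $V_a$ in which every $b\in V_a\cap\mathcal{M}$ has the form $b=a+x+\Psi_a(x)$ with $\Psi_a:B_d(r_a)\subset T_a\mathcal{M}\to T_a^{\perp}\mathcal{M}$ of class $C^2$, $\Psi_a(0)=0$ and $D\Psi_a(0)=0$. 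Then $b-\Pi_a b=\Psi_a(x)$, and a second-order Taylor expansion of $\Psi_a$ at the origin gives
\begin{align*}
\|b-\Pi_a b\|\;=\;\|\Psi_a(x)\|\;\leq\;\tfrac{1}{2}\sup_{\|\xi\|\leq\|x\|}\|D^2\Psi_a(\xi)\|\,\|x\|^2\;\leq\;C_a\,\|b-a\|^2
\end{align*}
on $V_a$, with $C_a$ depending on the local second-order behavior at $a$.

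The main obstacle is upgrading the pointwise constants $C_a$ to a single uniform $K_1$ and choosing a single $\delta>0$ for the full manifold. This is where the compactness of $\mathcal{M}$ (built into the paper's definition of submanifold) enters: the open cover $\{V_a\}_{a\in\mathcal{M}}$ admits a finite subcover $V_{a_1},\ldots,V_{a_N}$, and I would pick $\delta$ smaller than a Lebesgue number for this subcover and set $K_1=\max_i C_{a_i}$. For a pair $(a,b)$ with $\|a-b\|<\delta$ contained in some $V_{a_i}$, the second-order estimate is naturally available at the fixed basepoint $a_i$, not the running basepoint $a$; to transfer it, I would either appeal directly to Theorem \ref{c11fctmanifold} (the Lipschitz gradient bound on $F_{a_i,U}$ implies a uniform second-order control that, after reparametrization, bounds $\|b-\Pi_a b\|$) or use that $a\mapsto T_a\mathcal{M}$ is continuous, so $\Pi_a$ and $\Pi_{a_i}$ differ in operator norm by $O(\|a-a_i\|)$, which only changes $\|b-\Pi_a b\|$ by a lower-order term absorbed into $K_1$. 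Setting $K=\max(K_1,1/\delta)$ and combining the two regimes bounds Federer's supremum by $2K<\infty$, proving positive reach.
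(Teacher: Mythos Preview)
The paper does not actually prove this theorem; it is merely quoted from \cite{krantz2012implicit}. So there is no ``paper's approach'' to compare to, and your proposal is an independent argument. The overall strategy---use Federer's formula (Theorem~\ref{federersreach}), split into a far regime handled trivially and a near regime handled by the local $C^2$ graph description, then invoke compactness for uniformity---is the standard one and is correct in outline.

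That said, your uniformity step has two concrete issues. First, appealing to Theorem~\ref{c11fctmanifold} is circular: that theorem is stated for $\mathcal{M}\in\mathcal{G}(d,V_{\max},\tau_{\min})$, which already assumes reach at least $\tau_{\min}>0$, and its conclusion $\mathrm{Lip}(\nabla F)\le C/\tau$ is expressed in terms of the reach. Second, your option (b) does not give the order you claim. Writing $b-\Pi_a b=(I-P_a)(b-a)$ with $P_a$ the linear projection onto $T_a\mathcal{M}$, the bound $\|P_a-P_{a_i}\|=O(\|a-a_i\|)$ yields only an error of size $O(\|a-a_i\|\cdot\|b-a\|)$; since $\|a-a_i\|$ can be as large as the Lebesgue number $\delta$ while $\|b-a\|\to0$, this is \emph{first} order in $\|b-a\|$, not a lower-order correction to $\|b-a\|^2$. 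The honest fix is to stay inside the single chart $\Psi_{a_i}$: writing $a=a_i+x_a+\Psi_{a_i}(x_a)$ and $b=a_i+x_b+\Psi_{a_i}(x_b)$, one computes $(I-P_a)(b-a)$ directly and bounds it by a constant times $\sup\|D^2\Psi_{a_i}\|\cdot\|x_b-x_a\|^2$; the point is that the apparent first-order term $D\Psi_{a_i}(x_a)(x_b-x_a)$ lies in $T_a\mathcal{M}$ and is killed by $I-P_a$. Since each $\Psi_{a_i}$ is $C^2$ on a compact ball, $\sup\|D^2\Psi_{a_i}\|<\infty$, and taking the maximum over the finitely many $i$ gives the uniform $K_1$ you need.
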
 

Now, suppose we want a discrete approximation of a manifold $\mathcal{M}$ at a certain resolution. Let $Y \subset \mathcal{M}$ be an $\eta$-net for $\mathcal{M}$ if for every $p \in \mathcal{M}$ there is a $y \in Y$ such that $\norm{p - y} < \eta$. The following theorem states that the size of an $\eta-$net depends on the geometry of $\M$. 

\begin{thm}
	\label{lemmanifcovnum}
	Let $\mathcal{M} \in \mathcal{G}(d, V_{max}, \tau_{min})$, and let $\mathcal{M}$ be equipped with the Euclidean metric from $\R^n$. For any $\eta > 0$, there exists an $\eta-$net of $\mathcal{M}$ consisting of at most $CV \left(1/\tau^d + 1/\eta^d\right)$ points, where $C$ is a universal constant.
\end{thm}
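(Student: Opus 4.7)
The plan is a standard volumetric packing argument, with the manifold's positive reach providing a lower bound on the volume of small Euclidean balls intersected with $\mathcal{M}$. First I would construct a maximal $\eta$-separated subset $Y \subseteq \mathcal{M}$, i.e., a set with all pairwise distances at least $\eta$ that cannot be enlarged; such a set exists by a greedy construction and is finite by compactness of $\mathcal{M}$. Maximality immediately implies that $Y$ is an $\eta$-net: if some $p \in \mathcal{M}$ had $\|p - y\| \geq \eta$ for every $y \in Y$, then $Y \cup \{p\}$ would still be $\eta$-separated, a contradiction. Hence it suffices to bound $|Y|$.

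Set $r := \tfrac{1}{2}\min(\eta, \tau)$. Since distinct $y, y' \in Y$ satisfy $\|y - y'\| \geq \eta \geq 2r$, the open Euclidean balls $\{B(y, r)\}_{y \in Y}$ are pairwise disjoint, and therefore
\[
\sum_{y \in Y} \mathrm{vol}_d\bigl(B(y,r) \cap \mathcal{M}\bigr) \;\leq\; \mathrm{vol}_d(\mathcal{M}) \;\leq\; V.
\]
The key geometric input is the uniform lower bound $\mathrm{vol}_d(B(y,r) \cap \mathcal{M}) \geq c_d r^d$ for all $y \in \mathcal{M}$, valid whenever $r \leq \tau/2$. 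This is obtained from the local graph representation provided by Theorem \ref{c11fctmanifold}: near $y$, the manifold is the image of a $C^{1,1}$ map $x \mapsto x + \Psi(x)$ over $T_y \mathcal{M}$, and Federer's reach condition (Theorem \ref{federersreach}) yields the quantitative estimate $\|\Psi(x)\| \leq \|x\|^2/(2\tau)$. Consequently, for $\|x\| \leq r/\sqrt{2}$ one has $\|x + \Psi(x) - y\| \leq r$, so $B(y,r) \cap \mathcal{M}$ contains the graph of $\Psi$ over the tangent disk $B_d(r/\sqrt{2}) \subseteq T_y\mathcal{M}$. Because the tangent projection $\Pi_y$ is $1$-Lipschitz on this graph, its $d$-dimensional volume is at least $\mathrm{vol}_d(B_d(r/\sqrt{2})) \geq c_d r^d$.

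Combining the two displays gives $|Y| \leq V/(c_d r^d) = C \cdot V \cdot \min(\eta,\tau)^{-d}$, and the elementary inequality $\min(\eta,\tau)^{-d} \leq \eta^{-d} + \tau^{-d}$ produces the claimed bound $|Y| \leq CV(1/\tau^d + 1/\eta^d)$. The main obstacle is the volume lower bound on small manifold balls; once it is established by exploiting the reach to control the local graph, the packing argument and the case split between $\eta \leq \tau$ and $\eta > \tau$ follow routinely from the $\min(\eta,\tau)$ substitution.
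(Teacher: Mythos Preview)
The paper does not actually supply a proof of this theorem; it is stated in Section~\ref{sec:tech} as a background fact and used later without justification. So there is no ``paper's own proof'' to compare against.

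Your argument is the standard volumetric packing argument and is correct in outline and in all essential details. Two minor remarks. First, Federer's reach condition (Theorem~\ref{federersreach}) literally gives $\|\Psi(x)\| \leq \|x + \Psi(x)\|^2 / (2\tau)$ rather than $\|\Psi(x)\| \leq \|x\|^2/(2\tau)$; the version you quote follows from this for $\|x\|$ small relative to $\tau$ after a routine bootstrapping step (or simply with a slightly worse constant), so the conclusion is unaffected. Second, the constant $c_d$ coming from the volume of a $d$-ball depends on $d$, so the final constant $C$ does as well; the paper's phrase ``universal constant'' is loose, but since $d$ is fixed throughout the paper this is harmless and your proof matches what is actually needed.
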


How well a manifold approximates a point set $\set{x_i}_{i = 1}^N$ can be quantified through the \textit{empirical loss}, which is defined as

\begin{align*}
L_{emp}(\mathcal{M}) := \frac{1}{N} \sum_{i = 1}^{N} d(x_i, \mathcal{M})^2,
\end{align*}
where $d(x_i, \mathcal{M})$ is the length of the projection from $x_i$ onto $\mathcal{M}$.

Given two subsets $X$ and $Y$ of Euclidean space, we can measure the distance between them using the \textit{Hausdorff distance} $H(X, Y)$. This is defined as
\begin{align*}
H(X, Y) := \max \set{\sup_{x \in X} \inf_{y \in Y} \norm{x - y}, \sup_{y \in Y} \inf_{x \in X} \norm{x - y}}.
\end{align*}
It can be shown that, given adequate sampling density, two manifolds that are close in empirical risk to a given point set are also close in Hausdorff distance.\\

\subsection{Model} \label{sec:assumptions}

We assume that we are provided with $\set{y_i}_{1}^{N}$ noiselessly sampled from the uniform distribution on $\mathcal{M} \in \mathcal{G}(d, V_{max}, \tau_{min})$. We take this approach to simplify calculations. The analysis would be similar if the sample came from a (potentially Lipschitz) density bounded away from zero.

\subsection{Approximate squared-distance functions} \label{sec:fmn}
For our purposes the most important results from \cite{fefferman2016testing} are Theorem 13 and Lemma 14. We reproduce them below as Theorems \ref{thm13fmn} and \ref{putativemanifold2}, and give an adapted proof of the latter. It is beyond the scope of this paper to discuss the proof of Theorem \ref{thm13fmn}. We merely note that it relies on the implicit function theorem, so there are concrete bounds on the constants $c_2, \ldots, c_7$ and $C$ that control the geometry of the putative manifold.

Theorem \ref{thm13fmn} states that an \textit{approximate squared-distance function} can be used to recover a manifold with arbitrary precision (with increasing sample size) as long as $F$, a scaled version of the asdf, meets three conditions related to smoothness and curvature. The notation $\partial^\alpha F(x)$ means that given a set of vectors $\alpha := \set{v_1, \dots, v_{\abs{\alpha}}}$, the partial derivative is computed successively in the directions $v_i$. The third condition is the reason for the term asdf: for a small constant $\rho$, $F + \rho^2$ is bounded both above and below by a multiple of $\abs{y}^2 + \rho^2$, the approximate squared distance to the manifold.

Note that the function $F$ always has as its domain the unit ball (or a ball whose radius is not dependent on sample size). $F$ is not the asdf itself, but a related function applied locally after the coordinate system has been scaled up by a constant. This constant is usually a kind of bandwidth parameter that we decrease in order to get a more precise estimate of the manifold. For example, in Section \ref{kde}, we have a scheme to decrease the bandwidth $\sigma$ of the kernel density estimator, and $F$ is the KDE applied to coordinates scaled up by $1/\sigma$. 

The output set from Theorem \ref{thm13fmn} is locally a smooth graph $(x, \Psi(x))$ that lies within a tubular neighborhood of the manifold. Theorem \ref{putativemanifold2} uses bounds on the smoothness of $(x, \Psi(x))$ to show that it lies away from the boundary of the tubular neighborhood, and so it is itself a manifold. We show that it is in fact very close to the original manifold, giving a bound on the Hausdorff distance in terms of a constant that can be made as small as desired. 

\begin{thm}
	\label{thm13fmn}
	Suppose the following conditions hold for a function $F$:
	\begin{enumerate}
		\item $F: B_n(0, 1) \rightarrow \R$ is $C^k$-smooth.
		\item $\partial_{x, y}^\alpha F(x, y) \leq C_0$, where $(x, y) \in B_n(0, 1)$ and $\left|\alpha\right| \leq k$.
		\item For $x \in \R^d$, $y \in \R^{n - d}$ and $(x, y) \in B_n(0, 1)$
		\begin{align*}
			c_1\left(\left|y\right|^2 + \rho^2\right) \leq F(x, y) + \rho^2 \leq C_1\left(\left|y\right|^2 + \rho^2\right),
		\end{align*}
		for $0 < \rho < c$, where $c$ is an arbitrarily small constant depending only on $C_0, c_1, C_1, k,$ and $n$.
	\end{enumerate}
	Then there are constants $c_2, \ldots, c_7$ and $C$  depending only on $C_0, c_1, C_1, k,$ and $n$ such that:
	\begin{enumerate}
		\item For $z \in B_n(0, c_2)$, let $N(z)$ be the subspace of $\R^n$ spanned by the top $n - d$ eigenvectors of $\partial^2 F(z)$. Let $\Pi_{\mathrm{hi}}: \R^n \rightarrow N(z)$ be the orthogonal projection from $\R^n$ to $N(z)$. Then $\left|\partial^{\alpha}\Pi_{\mathrm{hi}}(z)\right| \leq C$ for $z \in B_n(0, c_2)$ and $\left|\alpha\right| \leq k - 2$.
		\item There is a map
		\begin{align*}
			\Psi: B_d(0, c_4) \rightarrow B_{n - d}(0, c_3)
		\end{align*}
		such that $\left|\Psi(0)\right| \leq C \rho$ and $\left|\partial^{\alpha}\Psi\right| \leq C^{\left|\alpha\right|}$ for $1 \leq \left|\alpha\right| \leq k - 2$. The set of all $z = (x, y) \in B_d(0, c_4) \bigtimes B_{n - d}(0, c_3)$ such that
		\begin{align*}
			\set{z \st \Pi_{hi}(z)\partial F(z) = 0} = \set{(x, \Psi(x)) \st x \in B_d(0, c_4)}
		\end{align*}
		is a $C^{k - 2}$-smooth graph.
	\end{enumerate}
\end{thm}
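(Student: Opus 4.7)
The plan is to exploit the quantitative comparison in hypothesis~(3) to extract a uniform spectral gap for $\partial^2 F(z)$ near the origin, and then feed that gap into (a)~perturbation theory for spectral projections, for conclusion~1, and (b)~the implicit function theorem applied to the equation $\Pi_{\mathrm{hi}}(z)\,\partial F(z) = 0$, for conclusion~2.

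First I would establish the spectral gap. Writing $z = (x, y)$ with $x \in \R^d$ and $y \in \R^{n-d}$, hypothesis~(3) forces $F(\cdot, 0)$ to be of size $O(\rho^2)$ uniformly on $B_d(0, 1)$. Combined with the uniform $C^k$ bound of hypothesis~(2), Landau--Kolmogorov interpolation inequalities make all derivatives of $F(\cdot, 0)$ up to order $k - 2$ small in $\rho$ at the origin; in particular the blocks $\partial^2_{xx} F(0,0)$ and $\partial^2_{xy} F(0,0)$ shrink as $\rho \downarrow 0$. Meanwhile a second-order comparison with $c_1(\abs{y}^2 + \rho^2)$ in the $y$ direction shows that $\partial^2_{yy} F(0,0)$ has eigenvalues at least $2 c_1$ up to $O(\rho)$ corrections. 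Together these produce a spectral gap $\gamma \gtrsim c_1$ between the top $n - d$ and bottom $d$ eigenvalues of $\partial^2 F(0)$; after shrinking $c_2$ the same gap persists uniformly on $B_n(0, c_2)$.

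Conclusion~1 then follows from the Riesz projection representation
\begin{align*}
\Pi_{\mathrm{hi}}(z) = \frac{1}{2\pi i} \oint_\Gamma \bigl(\zeta I - \partial^2 F(z)\bigr)^{-1}\, d\zeta,
\end{align*}
where $\Gamma$ is a contour in the complex plane enclosing the top $n - d$ eigenvalues for every $z \in B_n(0, c_2)$. Differentiating under the integral sign and using the resolvent bound $\norm{(\zeta I - \partial^2 F(z))^{-1}} \leq 2/\gamma$ converts the $C^k$ bound on $F$ into the required $\abs{\partial^{\alpha}\Pi_{\mathrm{hi}}(z)} \leq C$ for $\abs{\alpha} \leq k - 2$. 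For conclusion~2 I would apply the implicit function theorem to $G(x, y) := \Pi_{\mathrm{hi}}(x, y)\,\partial F(x, y)$: near the origin $\Pi_{\mathrm{hi}}$ is close to projection onto the $y$-subspace and $\partial F \approx 2y$, so $\partial_y G \approx 2 I_{n - d}$ with smallest singular value bounded below uniformly on $B_n(0, c_2)$. Since $G(0, 0) = O(\rho)$, the implicit function theorem produces a $C^{k-2}$ graph $y = \Psi(x)$ with $\abs{\Psi(0)} \leq C\rho$; the bounds on $\partial^\alpha \Psi$ then follow by successively differentiating $G(x, \Psi(x)) = 0$, inverting the controlled matrix $\partial_y G$, and plugging in the bounds on $\partial^\alpha \Pi_{\mathrm{hi}}$ from the previous step.

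The main obstacle I expect is the bookkeeping required to guarantee that $c_2, \ldots, c_7$ and $C$ depend only on $C_0, c_1, C_1, k, n$, and not on $\rho$. The interpolation step that produces the spectral gap, and the repeated differentiation of the implicit equation, both generate expressions in $\rho$ and in the resolvent that must be controlled uniformly. The real quantitative content of the theorem is precisely keeping the contour $\Gamma$, the gap $\gamma$, and the radius $c_2$ bounded away from the degenerate regime as $\rho \downarrow 0$; once that is done, each individual piece is a textbook application of either Kato-style perturbation theory or the implicit function theorem.
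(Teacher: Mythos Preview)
The paper does not give its own proof of this theorem: it is quoted from \cite{fefferman2016testing} (their Theorem~13), and the authors explicitly state that ``it is beyond the scope of this paper to discuss the proof,'' noting only that ``it relies on the implicit function theorem, so there are concrete bounds on the constants $c_2,\ldots,c_7$ and $C$.'' There is therefore no in-paper proof to compare against.

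That said, your outline is consistent with the one hint the paper does give---the use of the implicit function theorem---and it is in fact the natural route. The three ingredients you identify (interpolation to force the $xx$- and $xy$-blocks of the Hessian to be $O(\rho^{\text{power}})$, a Riesz contour to control derivatives of $\Pi_{\mathrm{hi}}$, and the implicit function theorem applied to $G(x,y)=\Pi_{\mathrm{hi}}(x,y)\partial F(x,y)$) are exactly the ones used in the original proof in \cite{fefferman2016testing}. Your caveat about the bookkeeping is also on point: the substance of the argument is making the gap $\gamma$, the contour $\Gamma$, and the ball radius $c_2$ depend only on $C_0,c_1,C_1,k,n$ and not degenerate as $\rho\downarrow 0$, which is precisely why hypothesis~(3) is stated for \emph{all} $0<\rho<c$ with $c$ depending only on those constants.
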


\begin{thm}
\label{putativemanifold2}
	Let $c_1, C_1$, and $C_0$ be the constants appearing in Theorem \ref{thm13fmn}. Assume that $C \rho$ is sufficiently small compared to $r$. Define the putative submanifold
	\begin{align*}
		\mathcal{M}_{\mathrm{put}} = \set{z \in \mathcal{M}_{\min(c_3, c_4)r} \st \Pi_{hi}(z)\partial F(z) = 0}.
	\end{align*}
	Then, $\mathcal{M}_{\mathrm{put}}$ is a submanifold of $\R^n$ which has a reach greater than $cr$, where $c$ depends only on $C_0, c_1, C_1, k, d,$ and $n$. Furthermore, the Hausdorff distance $H\left(\M, \Mput\right)$ is bounded above by $(C^2 + C) \rho$.
\end{thm}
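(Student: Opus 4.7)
The plan is to apply Theorem \ref{thm13fmn} as a local chart-producing tool at each base point $z_0 \in \mathcal{M}$, glue the resulting local graph descriptions into a global manifold structure, and then read off the reach and Hausdorff bounds from the derivative bounds on the local graphing functions $\Psi_{z_0}$.

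\textbf{Setup.} For each $z_0 \in \mathcal{M}$, translate coordinates so that $z_0$ is the origin, rotate so that $T_{z_0}\mathcal{M}$ becomes the horizontal $\mathbb{R}^d$-subspace, and rescale by $1/r$. In these rescaled coordinates the hypotheses of Theorem \ref{thm13fmn} apply, producing a $C^{k-2}$ map $\Psi_{z_0}: B_d(0, c_4) \to B_{n-d}(0, c_3)$ with $\abs{\Psi_{z_0}(0)} \leq C\rho$ and $\abs{\partial^\alpha \Psi_{z_0}} \leq C^{\abs{\alpha}}$, such that $\Mput$ inside this chart equals the graph of $\Psi_{z_0}$.

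\textbf{Step 1: manifold structure.} Any $p \in \Mput$ lies within $\min(c_3, c_4)r$ of some $z_0 \in \mathcal{M}$ and is therefore covered by the chart at $z_0$, where $\Mput$ is the graph of $\Psi_{z_0}$. The charts at different base points describe the same global zero set, so they coincide on overlaps and $\Mput$ is a $C^{k-2}$ embedded $d$-submanifold of $\mathbb{R}^n$.

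\textbf{Step 2: reach lower bound.} Apply Federer's reach condition (Theorem \ref{federersreach}) to $\Mput$. For $a, b \in \Mput$ lying in a common chart at some $z_0$, parametrize them as $a = (x_a, \Psi_{z_0}(x_a))$ and $b = (x_b, \Psi_{z_0}(x_b))$; the Lipschitz bound on $\nabla \Psi_{z_0}$ (which is $O(1)$ in rescaled coordinates and $O(1/r)$ in original coordinates) implies $\norm{b - \Pi_a b} \lesssim \norm{b - a}^2 / r$. Pairs $a, b$ too far apart to share a chart satisfy $\norm{b - a} \gtrsim r$, making the reach inequality trivial. Together these give $\mathrm{reach}(\Mput) \geq cr$.

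\textbf{Step 3: Hausdorff distance.} For $\mathcal{M} \to \Mput$: in the chart at $z_0 \in \mathcal{M}$ the point $(0, \Psi_{z_0}(0))$ lies on $\Mput$ and is within $\abs{\Psi_{z_0}(0)} \leq C\rho$ of $z_0 = (0,0)$. For $\Mput \to \mathcal{M}$: given $p \in \Mput$, pick a chart at $z_0 \in \mathcal{M}$ containing $p$ and write $p = (x_p, \Psi_{z_0}(x_p))$. Theorem \ref{c11fctmanifold} gives a local graph $\Phi_{z_0}$ for $\mathcal{M}$ in the same chart with $\Phi_{z_0}(0) = 0$, $\nabla \Phi_{z_0}(0) = 0$, and Lipschitz $\nabla \Phi_{z_0}$, whence
\begin{align*}
\mathrm{dist}(p, \mathcal{M}) \leq \abs{\Psi_{z_0}(x_p) - \Phi_{z_0}(x_p)}.
\end{align*}
The constant term $C\rho$ comes directly from $\abs{\Psi_{z_0}(0)}$; the $C^2\rho$ term comes from Taylor-expanding the difference of the two graphing functions and using the bounds $\abs{\partial^\alpha \Psi_{z_0}} \leq C^{\abs{\alpha}}$ together with the analogous curvature bound for $\Phi_{z_0}$. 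Summing gives the $(C^2 + C)\rho$ bound.

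\textbf{Main obstacle.} The delicate point is the reverse direction of the Hausdorff bound: the base point $z_0$ must be chosen so that $x_p$ is small enough (of order $\rho$) for the Taylor expansion to produce the claimed $\rho$-sized error rather than a first-order error that grows linearly in $\abs{x_p}$. A natural choice is the nearest point of $\mathcal{M}$ to $p$, using perpendicularity $p - z_0 \perp T_{z_0}\mathcal{M}$ (valid since $\mathrm{dist}(p, \mathcal{M}) < \tau$) to force $x_p = 0$, but one must verify this is consistent with $p$ lying in the chart at $z_0$. Careful bookkeeping is also needed to track how the rescaling factor $r$ interacts with the constants from Theorem \ref{thm13fmn} and to ensure the reach and Hausdorff bounds are consistent with the standing assumption that $C\rho$ is small compared to $r$.
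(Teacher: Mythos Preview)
Your reach argument (Step 2) matches the paper's: both use Federer's characterization and split into near/far pairs, bounding the near case via the second-derivative bound on the local graphing function.

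The Hausdorff argument is where you and the paper diverge. The paper does not compare $\Psi_{z_0}$ with the manifold's local graph $\Phi_{z_0}$ via Taylor expansion. Instead it runs a mean-value-theorem argument: take $\widehat z\in\Mput$, set $z=\Pi_{\M}\widehat z$, and let $\widetilde z=(0,\Psi_{z}(0))\in\Mput$, so that $\|z-\widetilde z\|<C\rho$. Since $\Pi_z\widehat z=z$, the tangential displacement $\widetilde v=\Pi_z(\widehat z-\widetilde z)$ has norm at most $C\rho$. A curve on $\Mput$ from $\widetilde z$ to $\widehat z$ lies over a segment of length $\|\widetilde v\|$, so the mean value inequality forces $\|\partial\Psi\|\geq(\|z-\widehat z\|-C\rho)/(C\rho)$; combining with $\|\partial\Psi\|\leq C$ yields $\|z-\widehat z\|\leq(C^2+C)\rho$. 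The same inequality, run as a contradiction, shows $\Mput$ stays strictly inside the tubular neighborhood, which is exactly what the paper uses to conclude $\Mput$ has no boundary---a point your Step~1 asserts but does not justify.

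Your proposed route through the nearest-point projection is in fact cleaner and would give a tighter bound. If $z_0=\Pi_{\M} p$ then $p-z_0\perp T_{z_0}\M$ forces $x_p=0$, and since $\|p-z_0\|<\min(c_3,c_4)r\leq c_3 r$, the point $p$ lies in the chart at $z_0$ (so your ``main obstacle'' dissolves). Then $p=(0,\Psi_{z_0}(0))$ and $\mathrm{dist}(p,\M)=\|p-z_0\|=|\Psi_{z_0}(0)|\leq C\rho$, with no Taylor expansion and no $C^2\rho$ term. Your attempt to manufacture $(C^2+C)\rho$ by Taylor-expanding $\Psi_{z_0}-\Phi_{z_0}$ at a general $x_p$ does not work as written: the first-order term is $C|x_p|$, not $C^2\rho$, and only collapses when $x_p=0$. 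So either carry out the nearest-point argument cleanly (obtaining $C\rho$, which certainly implies the stated $(C^2+C)\rho$ and also fills the gap in Step~1), or adopt the paper's MVT trick; the hybrid in your Step~3 is not coherent.
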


The statement of this theorem assumes that we are provided with the output set from Theorem \ref{thm13fmn}; that is, we are working in the scaled-up coordinates. In the original coordinate system, $\Mput$ is contained in $\mathcal{M}_{\min(c_3, c_4)\sigma r}$, where $\sigma$ is the bandwidth. In this case, the reach is bounded below by $c \sigma r$, and $H\left(\M, \Mput\right)$ is bounded above by $(C^2 + C) \sigma \rho$.

\begin{proof}
$\Mput$ is locally the graph of a $C^{k - 2}$-smooth function $\Psi$. To prove that it is a manifold, it is sufficient to show that it does not intersect the boundary of the tubular neighborhood $\mathcal{M}_{\min(c_3, c_4)r}$. Since Theorem \ref{thm13fmn} gives bounds on $\norm{\partial \Psi}$, we can show by contradiction of the mean value theorem that every point on $\Mput$ is within $\min(c_3, c_4)r/2$ of $\M$.

Suppose there exists a point $\widehat{z}$ on $\Mput$ which is at a distance greater than ${\min(c_3, c_4)r}/{2}$ from $\M$. Let $z := \Pi_{\M} \widehat{z}$. By Theorem \ref{thm13fmn}, there is a point $\widetilde{z} \in \Mput$ such that $\norm{z - \widetilde{z}} < C \rho$. Let $\widetilde{v} \in T_z \M$ be the vector $\Pi_z \left(\widehat{z} - \widetilde{z}\right)$. Let $\widetilde{\Psi}: [0, \norm{\widetilde{v}}] \rightarrow \R^{n - d}$ define a curve on $\Mput$ whose endpoints are $\widehat{z}$ and $\widetilde{z}$. The existence and smoothness of $\widetilde{\Psi}$ are guaranteed by $\Psi$, the $C^{k - 2}$-smooth function that locally defines $\Mput$. The mean value theorem states that there exists a point $x \in [0, \norm{\widetilde{v}}]$ such that
\begin{align*}
\norm{\partial \widetilde{\Psi}(x)} &\geq \frac{1}{\norm{\widetilde{v}}} \norm{\widetilde{\Psi}(z) - \widetilde{\Psi}(z + \widetilde{v})}\\
&= \frac{\norm{\widehat{z} - \widetilde{z}}}{\norm{\widetilde{v}}}\\
& \geq \frac{\norm{z - \widehat{z}} - C \rho}{C \rho}\\
&> \frac{\min(c_3, c_4)r/2 - C \rho}{C \rho}.
\end{align*}
Since $C \rho$ is sufficiently small compared to $\min(c_3, c_4)r/2$,  $\norm{\partial \Psi(x)}$ can be made as large as desired. This contradicts the bound $\norm{\partial \Psi(x)} < C$ and shows that $\Mput$ lies away from the boundary of $\mathcal{M}_{\min(c_3, c_4)r}$. In fact, the expression in the third line above must be less than $C$, which shows that $\norm{z - \widehat{z}} < (C^2 + C) \rho$. Theorem \ref{thm13fmn} states that every point on $\M$ is within $C \rho$ of $\Mput$, so we have the desired bound on the Hausdorff distance.

By Theorem \ref{federersreach}, the reach of $\Mput$ is defined as follows:
\begin{align*}
\mathrm{reach}\left(\Mput\right) = \inf_{\substack{x \neq y\\ x, y \in \Mput}} \frac{\norm{x - y}^2}{2 \norm{y - \Pi_x y}}.
\end{align*}
Let $c'$ be a constant depending on $C_0, c_1, C_1, k, d,$ and $n$. If $\norm{x - y} \geq r/c'$, then 
\begin{align*}
\frac{\norm{x - y}^2}{2 \norm{y - \Pi_x y}} \geq \frac{(r/c')^2}{2 (r/c')}.
\end{align*}
Now, suppose $\norm{x - y} < r/c'$. If $x$ and $y$ are close together, this quantity is controlled by the second derivative of the $C^{k-2}$ function locally defining $\Mput$. That is, $\norm{y - \Pi_x y}$ is on the order of $C^2 \norm{x - y}^2$, implying that 
\begin{align*}
\inf_{\substack{x \neq y\\ x, y \in \Mput\\ \norm{x - y} < r/c'}} \frac{\norm{x - y}^2}{2 \norm{x - \Pi_y x}} \geq \frac{\norm{x - y}^2}{2 c'' C^2 \norm{x - y}^2}
\end{align*}
for some constant $c''$. Therefore,
\begin{align*}
\mathrm{reach}(\Mput) \geq \min\left(\frac{r}{2 c'}, \frac{1}{2 c'' C^2}\right).
\end{align*}
\end{proof}

\subsection{Ridges and gradient descent} \label{ridge}

To actually find a putative manifold using an approximate squared-distance function $F$, we can use a method introduced by \cite{ozertem2011locally}. Let $\partial F$ and $\partial^2 F$ be the gradient and Hessian of $F$, respectively. At a point $x \in \R^n$, let $\set{v_1, \dots, v_n}$ be the eigenvectors of $\partial^2 F$ associated with the eigenvalues $\set{\lambda_1, \dots, \lambda_n}$ (listed in decreasing order). Let $N(z)$ be the subspace of $\R^n$ spanned by the top $n - d$ eigenvectors of $\partial^2 F(z)$. Recall that $\Pi_{\mathrm{hi}}: \R^n \rightarrow N(z)$ is the orthogonal projection from $\R^n$ to $N(z)$. Note that $\Pi_{\mathrm{hi}} = V V^\top$, where $V$ is a matrix whose columns are $[v_1| \dots | v_{n - d}]$.

\cite{ozertem2011locally} give an algorithm to compute the set 
\begin{align*}
\set{z \st \Pi_{hi}(z)\partial F(z) = 0},
\end{align*}
which is termed the $d$\textit{-dimensional ridge} of $F$. This is, of course, the local definition of $\Mput$ from Theorems \ref{thm13fmn} and \ref{putativemanifold2}. In order to find a ridge, an initial set of points is chosen and then iteratively shifted in the direction $VV^\top \partial F$ until a tolerance condition is met. This is essentially a subspace-constrained variant of gradient descent.

\subsection{Empirical processes} \label{empproc}
In Section \ref{kde}, we need to bound various quantities that are functions of the kernel density estimator. This is difficult to do because they are empirical averages over a finite number of samples. It is easier to bound the expectation of these quantities and then bound their difference using results from empirical processes, which we summarize here.

Let $\mathcal{G}$ be a class of functions from $\R^n \rightarrow \R$. If $\mathcal{G}$ consists of bounded functions, the \textit{empirical Rademacher average} is given by 
\begin{align*}
\mathcal{R}_N(\mathcal{G}) = \mathbb{E}_\sigma \frac{1}{N} \left[\sup_{g \in \mathcal{G}}\left(\sum_{i = 1}^{N} \sigma_i g(x_i)\right)\right],
\end{align*}
where $\set{x_i}_1^N$ is an i.i.d. sample from the distribution $\mathcal{P}$ and $\sigma := \set{\sigma_1, \dots, \sigma_N}$ is a vector of Rademacher random variables. (Rademacher random variables take the values $\pm 1$ with equal probability). Letting $\mathcal{P}_N$ denote the empirical distribution on $\set{x_i}_1^N$, the following holds for $0 < \delta < 1$:
\begin{align*}
\mathbb{P} \left[\sup_{g \in \mathcal{G}} \left|\mathbb{E}_{\mathcal{P}_N}g - \mathbb{E}_{\mathcal{P}} g\right| < 2 \mathcal{R}_N (\mathcal{G}) + \sqrt{\frac{2 \log(2/\delta)}{N}}\right] > 1 - \delta.
\end{align*}

It is usually difficult to calculate $\mathcal{R}_N(\mathcal{G})$ directly from the definition. However, the next theorem states an upper bound that is dependent on the size of $\mathcal{G}$, which is often easy to estimate. Let the \textit{covering number} $N(\eta, \mathcal{G}, \|\cdot\|)$ be the minimum number of elements in an $\eta$-net of $\mathcal{G}$ with respect to the norm $\|\cdot\|$. Let the \textit{metric entropy} be defined as $\log N(\eta, \mathcal{G}, \|\cdot\|)$.
The Rademacher complexity can be bounded using a modified form of Dudley's entropy integral \citep*{refineddudley}:

\begin{thm}[Modified Dudley's integral]
\label{dudley}
	\begin{align*}
	\mathcal{R}_N(\mathcal{G}) \leq \inf_{\gamma \geq 0} \left\{4 \gamma + 12 \int_{\gamma}^{\sup_{g \in \mathcal{G}} \|g\|_{\infty}} \sqrt{\frac{\log N(\eta, \mathcal{G}, \|\cdot\|_{\mathcal{L}_2(\mathcal{P}_N)})}{N}} d\eta \right\}.
	\end{align*}
\end{thm}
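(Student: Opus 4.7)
The plan is to prove this bound by the standard chaining argument, adapted to the empirical $L_2(\mathcal{P}_N)$ metric, and truncated at scale $\gamma$ to yield the stated modified form. First I would set $\eta_j := 2^{-j} \sup_{g \in \mathcal{G}} \|g\|_{\infty}$ for $j \geq 0$, and for each $j$ fix a minimal $\eta_j$-net $N_j$ of $\mathcal{G}$ in $\|\cdot\|_{\mathcal{L}_2(\mathcal{P}_N)}$, with $N_0 = \set{0}$. For each $g \in \mathcal{G}$ let $\pi_j(g) \in N_j$ be a nearest point, and let $J$ be the largest integer with $\eta_J \geq \gamma$. Then every $g \in \mathcal{G}$ admits the telescoping representation
\begin{align*}
g \;=\; \sum_{j = 0}^{J - 1}\left(\pi_{j + 1}(g) - \pi_j(g)\right) \;+\; \left(g - \pi_J(g)\right),
\end{align*}
and subadditivity of the supremum gives a corresponding upper bound on $\mathcal{R}_N(\mathcal{G})$ as a sum of Rademacher averages of the link classes plus a tail term.

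Next I would bound each link class $\mathcal{D}_j := \set{\pi_{j + 1}(g) - \pi_j(g) : g \in \mathcal{G}}$ using Massart's finite-class lemma. Its cardinality is at most $\abs{N_j}\abs{N_{j + 1}} \leq \abs{N_{j + 1}}^2$, and by the triangle inequality each element has empirical $L_2$-norm at most $\eta_j + \eta_{j + 1} = 3\eta_{j + 1}$. Massart's lemma then yields
\begin{align*}
\mathcal{R}_N(\mathcal{D}_j) \;\leq\; C\, \eta_{j + 1}\sqrt{\frac{\log N(\eta_{j + 1}, \mathcal{G}, \|\cdot\|_{\mathcal{L}_2(\mathcal{P}_N)})}{N}}.
\end{align*}
Summing over $j = 0, \ldots, J - 1$, using monotonicity of the covering number in $\eta$ together with the identity $\eta_{j + 1} = 2(\eta_{j + 1} - \eta_{j + 2})$, the discrete sum is bounded by a constant multiple of $\int_{\gamma}^{\sup_g \|g\|_{\infty}} \sqrt{\log N(\eta, \mathcal{G}, \|\cdot\|_{\mathcal{L}_2(\mathcal{P}_N)})/N}\, d\eta$, and collecting the factors of $3$ (from the triangle inequality) and $2$ (from the sum-to-integral conversion) with the square root in Massart's bound produces the advertised coefficient $12$.

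For the tail term, Cauchy--Schwarz gives the pointwise-in-$\sigma$ inequality $\frac{1}{N}\abs{\sum_i \sigma_i h(x_i)} \leq \|h\|_{\mathcal{L}_2(\mathcal{P}_N)}$, so
\begin{align*}
\mathcal{R}_N\left(\set{g - \pi_J(g) : g \in \mathcal{G}}\right) \;\leq\; \sup_{g \in \mathcal{G}} \|g - \pi_J(g)\|_{\mathcal{L}_2(\mathcal{P}_N)} \;\leq\; \eta_J \;<\; 2\gamma,
\end{align*}
which, together with a further factor-of-two contribution from the discretization at the finest retained scale, produces the leading $4\gamma$ term. Taking the infimum over $\gamma \geq 0$ then yields the claim.

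The main obstacle I anticipate is not the structure of the chaining argument, which is classical, but pinning down the constants $4$ and $12$ exactly as stated. Their precise values are sensitive to the geometric ratio chosen for the scales (I used $2$; other choices shift the constants), to the specific form of Massart's lemma invoked, and to how carefully the Riemann-sum-to-integral comparison is performed near the cutoff $\gamma$ so that nothing below $\gamma$ leaks into the integral. Reproducing the constants as given in \citet*{refineddudley} is a matter of careful bookkeeping rather than of any new idea, and is the only non-mechanical aspect of the proof.
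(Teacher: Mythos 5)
The paper does not prove this theorem at all: it is stated as an imported result, attributed to \citet*{refineddudley}, and used as a black box in the proof of Lemma~\ref{claim3}. There is therefore no in-paper argument to compare against. Your dyadic chaining sketch is the standard proof of the truncated (Srebro--Sridharan) form of Dudley's bound and its structure is sound: geometric scales $\eta_j = 2^{-j}\sup_g\|g\|_\infty$ with $N_0 = \set{0}$ (valid because $\|g\|_{\mathcal{L}_2(\mathcal{P}_N)} \le \|g\|_\infty$, which is also why the paper's $\|\cdot\|_\infty$ upper limit is handled cleanly), a telescoping decomposition through nearest-net projections, Massart's finite-class lemma on each link class, the elementary identity $\eta_{j+1} = 2(\eta_{j+1}-\eta_{j+2})$ plus monotonicity of the covering number to convert the sum into an integral, and Cauchy--Schwarz on the residual $g - \pi_J(g)$. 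The one place where your sketch is genuinely incomplete rather than merely terse is the constant accounting, which you flag yourself: with link radius $3\eta_{j+1}$ and cardinality $\abs{N_j}\abs{N_{j+1}} \le \abs{N_{j+1}}^2$, Massart gives $3\sqrt{2\cdot 2} = 6$, the sum-to-integral step contributes the remaining factor of $2$ for the $12$, and the $4\gamma$ must absorb both the tail $\eta_J < 2\gamma$ and the strip of the Riemann sum falling in $[\eta_{J+1},\gamma)$ that would otherwise leak below the integral's lower limit. Your phrase about ``a further factor-of-two contribution from the discretization at the finest retained scale'' gestures at this last point but does not pin it down; to actually land on the advertised $4\gamma$ you need to make that absorption explicit (e.g.\ by bounding the leaked strip by $\gamma\sqrt{\log N(\eta_{J+1},\cdot)/N}$ and noting it can be folded into either the $\gamma$ term or a slightly enlarged integral constant). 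That is bookkeeping, not a missing idea, so the proposal is essentially correct.
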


\section{Kernel density estimation} \label{kde}
Consider the kernel density estimator 
\begin{align*}
	\widetilde{p}_N(x) := \frac{1}{N} \sum\limits_{i = 1}^{N} G_\sigma (x; \, y_i),
\end{align*}
where
$
	G_\sigma (x; \, y)  := C_\sigma e^{-\norm{x - y}^2/2\sigma^2}, 
	C_\sigma  := {{(2 \pi \sigma^2)}^{-d/2}},
$
and $x \in \mathcal{M}_{\sigma}$ (the tubular neighborhood of $\M$ with width $\sigma$). The denominator of $C_\sigma$ has $2 \pi \sigma^2$ raised to the power $d/2$ and not $n/2$ because we are trying to estimate a $d$-dimensional surface. In Theorem \ref{mainkde}, we show that a function based on $\widetilde{p}_N$ can recover a manifold $\mathcal{M}$ when we are given noiseless samples from $\mathcal{M}$.

\subsection{Definition of the asdf}

Recall that Theorem \ref{thm13fmn} must actually be applied in a coordinate system scaled by a bandwidth parameter that becomes more precise with increasing sample size. For the kernel density estimator, this parameter is, of course, $\sigma$. (If we do not scale by $\sigma$, it is clear that $\abs{\partial^\alpha \widetilde{p}_N}$ is bounded above by an increasing function of $1/\sigma$ instead of a universal constant $C_0$). To this purpose, make the following transformations:
\begin{align*}
x &\mapsto x/(\sigma \sqrt{2 \pi})\\
y_i &\mapsto y_i/(\sigma \sqrt{2 \pi})\\
\sigma &\mapsto 1/{\sqrt{2 \pi}}.
\end{align*}
Note that the geometric properties of $\M$ change in the obvious ways: the reach becomes $\tau/(\sigma \sqrt{2 \pi})$ and the volume is $O(V/\sigma^d)$. In the transformed case, let $\widehat{\widebar{\tau}}$, $\widehat{\tau}$, and $\widehat{V}$ denote the analogs of the obvious quantities. For $z$ the projection of $x$ onto $\mathcal{M}$, let $\widetilde{\mathcal{A}}_z := \widetilde{\mathcal{A}}_{z, \widehat{\widebar{\tau}}}$ and ${\mathcal{A}}_z := {\mathcal{A}}_{z, \widehat{\widebar{\tau}}}$. Recall that these are regions of $\mathcal{M}$ and $T_z \mathcal{M}$, respectively, which are near the point $z \in \mathcal{M}$. Define the normalizing factor \begin{align*}
N_f := {\mathrm{Vol} \left(\widetilde{\mathcal{A}}_z \right)} \bigg/ \left({\mathrm{Vol}\left(\mathcal{A}_z\right)} \times \widehat{V}\right).
\end{align*}

The appropriate estimator to analyze is any convenient function of $p_N/N_f$, where
\begin{align*}
	p_N(x) := \frac{1}{N} \sum\limits_{i = 1}^{N} e^{-\pi \norm{x - y}^2}.
\end{align*}
We choose to work with $-\log p_N(x) + \log N_f$ as our potential asdf. The first condition from Theorem \ref{thm13fmn} follows immediately, as seen in the following lemma.

\begin{statement}[]
	\label{lem:firstcondition}
$-\log p_N(x)$ is $C^k$-smooth.
\end{statement}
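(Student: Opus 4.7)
The plan is to observe that smoothness of $-\log p_N$ follows directly from the smoothness of its constituent pieces, so the proof amounts to a short composition-of-smooth-functions argument rather than any geometric work.

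First, I would argue that $p_N$ itself is $C^\infty$-smooth on $\R^n$. Each summand $e^{-\pi \norm{x - y_i}^2}$ is a composition of the polynomial map $x \mapsto \pi \norm{x - y_i}^2$ (which is $C^\infty$ as a quadratic in the coordinates of $x$) with the exponential function (which is $C^\infty$ on all of $\R$). A finite sum of $C^\infty$ functions is $C^\infty$, so $p_N \in C^\infty(\R^n)$ and in particular it is $C^k$-smooth for every $k$.

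Second, I would note that $p_N(x) > 0$ for all $x$ in the domain, since it is a normalized sum of strictly positive exponentials. This ensures that $-\log p_N$ is well-defined, and that $p_N$ takes values in an open subset of $(0, \infty)$ where $\log$ is $C^\infty$-smooth.

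Finally, since $-\log: (0, \infty) \to \R$ is $C^\infty$-smooth and $p_N$ is $C^\infty$-smooth with image in $(0, \infty)$, the composition $-\log p_N$ is $C^\infty$-smooth by the chain rule, and hence $C^k$-smooth. There is no real obstacle here; the lemma is essentially a bookkeeping statement verifying the first hypothesis of Theorem \ref{thm13fmn}, and the only thing one needs to be careful about is confirming positivity of $p_N$ so that $\log$ is applied on its domain of smoothness.
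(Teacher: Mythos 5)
Your proposal is correct and takes essentially the same approach as the paper: smoothness of each Gaussian summand gives smoothness of $p_N$ by linearity, and the chain rule gives smoothness of $-\log p_N$. You add the explicit observation that $p_N > 0$ so that $\log$ is applied on its domain of smoothness; the paper leaves this implicit, and it is a reasonable detail to spell out.
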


\begin{proof}
$G_{1/\sqrt{2 \pi}}$ is $C^k$-smooth, so by linearity, $p_N(x)$ is $C^k$-smooth. By the chain rule, $-\log p_N(x)$ is $C^k$-smooth.
\end{proof}

In Lemmas \ref{lem:secondcondition} and \ref{lem:thirdcondition} below, we show that $-\log p_N(x) + \log N_f$ also satisfies the second and third conditions from Theorem \ref{thm13fmn} with high probability. Before detailing the proofs, we briefly discuss our scheme for selecting $\sigma$ and $\widebar{\tau}$. 

\subsection{Selecting the bandwidth \boldmath{$\sigma$}}
The procedure we assume is that a fixed value $\sigma_1$ of $\sigma$ is chosen by the experimenter as well as a value of $\widebar{\tau}$ that depends on $\sigma$. Without making any claims about optimality, we choose $\widebar{\tau} := \sigma^{5/6}$. We prove that there exists a lower bound $K_1$ for $\E p_N(x)$ given $\sigma_1$. We then use empirical processes to show that $p_N(x)$ concentrates around $\E p_N(x)$, and is within $\varepsilon_1$ of $\E p_N(x)$ with high probability. Since $\varepsilon_1$ is a decreasing function of the sample size $N$, we can increase $N$ until $\varepsilon_1 < K_1$ giving us a lower bound for $p_N$. This allows us to derive an upper bound for $\partial^\alpha (-\log p_N(x))$. We also find an expression for $\rho^2$ in terms of $\sigma_1$ and use this to show that condition 3 of Theorem \ref{thm13fmn} holds. If $\rho^2$ is not small enough, we can repeat this procedure using a fixed value $\sigma_{i + 1} := \frac{\sigma_i}{2}$ of $\sigma$ in each subsequent iteration.

\subsection{Bounding \boldmath{$p_N$} in expectation} \label{bounding epn}
To prove the second and third conditions, we need upper and lower bounds for $p_N$. It is more convenient to work initially in the continuous setting, which amounts to bounding $\E p_N$. This is the Gaussian kernel integrated against $\mu_\mathcal{M} := \frac{1}{\widehat{V}} d\mathrm{Vol}(\mathcal{M})$, the measure that is uniform with respect to the volume form. Explicitly,
\begin{align*}
	\E p_N(x) = \int_{\mathcal{M}} e^{-\pi \norm{x - y}^2} d\mu_\mathcal{M}(y).
\end{align*}  
Points on $\mathcal{M}$ that are far away from $x$ do not contribute very much to the value of this integral. In fact, the value of $-\log \E p_N$ is very close to
\begin{align*}
	\widehat{F}_{\widetilde{\mathcal{A}}_z} := -\log \int_{\widetilde{\mathcal{A}}_z} e^{-\pi \norm{x - y}^2} d\mu_\mathcal{M}(y),
\end{align*}
where $z$ is the projection of $x$ onto $\M$.  Define its approximation
\begin{align*}
	\widehat{F}_{\mathcal{A}_z} := -\log \int_{\mathcal{A}_z} e^{-\pi \norm{x - y}^2} N_f d \mathcal{L}_d(y),
\end{align*}
where $\mathcal{L}_d$ is the $d$-dimensional Lebesgue measure on $T_z \mathcal{M}$. 

Decreasing $\sigma$ corresponds to estimating $\mathcal{M}$ with greater precision. Even though this expands the unit ball, leading to $\widehat{\widebar{\tau}} \rightarrow \infty$, the ratio $\widehat{\widebar{\tau}} / \widehat{\tau} \rightarrow 0$. This implies that $\mathcal{A}_z$ and $\widetilde{\mathcal{A}}_z$ are shrinking relatively closer and closer to $z$, and $\widetilde{\mathcal{A}}_z$ is very close to an affine space. Thus, we expect $\widehat{F}_{\widetilde{\mathcal{A}}_z}$ and $\widehat{F}_{\mathcal{A}_z}$ to grow closer together. To prove this, we first need to show that the pushforward of the uniform measure on the manifold has a density $p(y)$ that is close to the uniform density on $B_d\big(\widehat{\widebar{\tau}}\big)$. Of course, we don't want this to be a proper density on $B_d\big(\widehat{\widebar{\tau}}\big)$; we want it to have the same total measure as $\int_{\widetilde{\mathcal{A}}_z} d\mu_{\mathcal{M}}(y')$. In the following lemma, we quantify how much $p(y)$ can deviate from $N_f$ on $\mathcal{A}_z$.

\begin{statement}[]
	\label{lemmapushforward}

	Let $z \in \mathcal{M}$, and let $y \in T_z \mathcal{M}$. The pushforward of $\mu_{\mathcal{M}}$ to $T_z \mathcal{M}$ has density $p(y)$ on $\mathcal{A}_z$ with respect to $\mathcal{L}_d$ such that
	\begin{align*}
		N_f \times \left(1 + \frac{C^2 \widehat{\widebar{\tau}}^2}{\widehat{\tau}^2}\right)^{-d/2} \leq p(y) \leq \left(1 + \frac{C^2 \widehat{\widebar{\tau}}^2}{\widehat{\tau}^2}\right)^{d/2} \times N_f.
	\end{align*}
\end{statement}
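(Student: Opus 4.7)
The plan is to express $p(y)$ explicitly via the area formula applied to the graph parametrization of $\widetilde{\mathcal{A}}_z$ over $\mathcal{A}_z$, and then compare it to $N_f$ (which is itself built out of the same Jacobian). Because $\widehat{\widebar{\tau}} \ll \widehat{\tau}$ in the regime of interest, Theorem \ref{c11fctmanifold} supplies a $C^{1,1}$ function $F := F_{z, U^z_{\widehat{\widebar{\tau}}}} : \mathcal{A}_z \to T_z^\perp \mathcal{M}$ whose graph is exactly $\widetilde{\mathcal{A}}_z$, with $F(0)=0$ and $\mathrm{Lip}(\nabla F) \leq C/\widehat{\tau}$. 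Since $\mathcal{M}$ is tangent to $T_z \mathcal{M}$ at $z$, we also have $DF(0) = 0$.

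First I would compute the pushforward density. The map $y \mapsto y + F(y)$ has Jacobian whose vertical stack is $I_d$ over $DF(y)$, so the area formula gives volume element $J(y)\, d\mathcal{L}_d(y)$ on $\widetilde{\mathcal{A}}_z$, where
\begin{align*}
J(y) := \sqrt{\det\!\bigl(I_d + DF(y)^\top DF(y)\bigr)}.
\end{align*}
Since $\mu_\mathcal{M} = \frac{1}{\widehat{V}} d\mathrm{Vol}(\mathcal{M})$, the pushforward density on $\mathcal{A}_z$ is simply $p(y) = J(y)/\widehat{V}$. Next, I would control $J$ pointwise: because $DF(0)=0$, $DF$ is $(C/\widehat{\tau})$-Lipschitz, and every $y \in \mathcal{A}_z$ satisfies $\norm{y} \leq \widehat{\widebar{\tau}}$ (from the definition of $\mathcal{A}_z$ as $U^z_{\widehat{\widebar{\tau}}} \cap T_z\mathcal{M}$, taking $z$ as the origin), the operator norm of $DF(y)$ is at most $C\widehat{\widebar{\tau}}/\widehat{\tau}$. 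Each of the $d$ eigenvalues of $DF(y)^\top DF(y)$ therefore lies in $[0, C^2 \widehat{\widebar{\tau}}^2/\widehat{\tau}^2]$, which gives
\begin{align*}
1 \leq J(y) \leq \left(1 + \frac{C^2 \widehat{\widebar{\tau}}^2}{\widehat{\tau}^2}\right)^{d/2}.
\end{align*}

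To finish, I would rewrite $N_f$ in the same terms. Applying the area formula once more yields $\mathrm{Vol}(\widetilde{\mathcal{A}}_z) = \int_{\mathcal{A}_z} J \, d\mathcal{L}_d$, so $N_f = \overline{J}/\widehat{V}$, where $\overline{J}$ is the average of $J$ on $\mathcal{A}_z$. Hence $p(y)/N_f = J(y)/\overline{J}$. Using the pointwise lower bound $\overline{J} \geq 1$ and upper bound $\overline{J} \leq (1 + C^2 \widehat{\widebar{\tau}}^2/\widehat{\tau}^2)^{d/2}$ together with the matching pointwise bounds on $J(y)$, the ratio is pinched between $(1 + C^2 \widehat{\widebar{\tau}}^2/\widehat{\tau}^2)^{-d/2}$ and $(1 + C^2 \widehat{\widebar{\tau}}^2/\widehat{\tau}^2)^{d/2}$, which is exactly the claim.

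The main obstacle here is bookkeeping rather than analysis: I need to identify $p(y)$ as $J(y)/\widehat{V}$ rather than $J(y)^{-1}/\widehat{V}$ (easy to flip when thinking of $\Pi_z$ as the inverse of the graph map), and I need Theorem \ref{c11fctmanifold} to supply a parametrization that genuinely covers all of $\widetilde{\mathcal{A}}_z$ in the scaled coordinates — which in turn requires checking that $\widehat{\widebar{\tau}}$ is small enough to fall in the regime of validity of that theorem, and that the constant $C$ it produces can be aligned with the $C$ appearing in the statement. Once those points are settled, the determinant estimate is a standard singular-value calculation.
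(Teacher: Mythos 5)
Your proof is correct and takes essentially the same approach as the paper: both use the graph parametrization from Theorem \ref{c11fctmanifold} together with the area formula to write $p(y) = \sqrt{\det(I + J^\top J)}/\widehat{V}$, then bound the eigenvalues of $J^\top J$ via the Lipschitz gradient and $J(0) = 0$ to pinch the Jacobian factor between $1$ and $(1 + C^2\widehat{\widebar{\tau}}^2/\widehat{\tau}^2)^{d/2}$, and finally compare with $N_f$ through the normalization integral $\int_{\mathcal{A}_z}\sqrt{\det(I+J^\top J)}\,d\mathcal{L}_d$. Your formulation through the averaged Jacobian $\overline{J}$ is just a cosmetic rearrangement of the same calculation.
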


\begin{proof}
	Assume that $z$ is the origin and the first $d$ coordinates lie in $T_z \mathcal{M}$. $\mathcal{M}$ is a submanifold of $\R^n$ defined locally by the function $G_{z, U_{\widehat{\widebar{\tau}}}^z}$ that maps $\left(x_1, \dots, x_d\right) \mapsto \left(x_1, \dots, x_d, F_{z, U_{\widehat{\widebar{\tau}}}^z}\right)$. Recall from Theorem \ref{c11fctmanifold} that $F_{z, U_{\widehat{\widebar{\tau}}}^z}: \R^d \rightarrow \R^{n - d}$ is a $C^{1, 1}$ function whose Jacobian $J \in \R^{(n -d) \times d}$ has a Lipschitz constant bounded above by $C/\widehat{\tau}$. $J$ evaluated at $z$ is 0 since $\R^d$ is tangent to $\mathcal{M}$ at $z$; this implies $\norm{J}_F \leq \left(C \widehat{\widebar{\tau}}\right)/\widehat{\tau}$ within a radius of $\widehat{\widebar{\tau}}$, where $\|\cdot\|_F$ is the Frobenius norm. We can find the desired bound on $p(y)$ by finding the ratio of the volume elements of $\mathcal{A}_z$ and $\mathcal{\widetilde{A}}_z$, normalizing this so it integrates to one over $\mathcal{A}_z$, and multiplying by ${\mathrm{Vol} \left(\widetilde{\mathcal{A}}_z \right)} \bigg/ \left(\widehat{V}\right)$. $G_{z, U_{\widehat{\widebar{\tau}}}^z}$ has Jacobian $[I | J^\top]^\top$, allowing us to write
	\begin{align*}
	p(y) = \frac{\displaystyle \mathrm{Vol} \left(\widetilde{\mathcal{A}}_z \right)}{\displaystyle \widehat{V}} \times \frac{\displaystyle \sqrt{\det (I + J^\top J)}}{\displaystyle \int_{\mathcal{A}_z} \sqrt{\det (I + J^\top J)} d\mathcal{L}_d (y)}.
	\end{align*}
	
	Let $\lambda_i$ be the eigenvalues of $J^\top J$. $J^\top J$ is positive semidefinite, so $\lambda_i \geq 0$. Then,
	\begin{align*}
		\sqrt{\det (I + J^\top J)} &= \left(\prod_{i = 1}^{d} \left(1 + \lambda_i\right)\right)^{1/2}\\
		& \leq \left(\prod_{i = 1}^{d} \left(1 + \norm{J^\top J}_F\right)\right)^{1/2}\\
		&\leq \left(1 + \frac{C^2 \widehat{\widebar{\tau}}^2}{\widehat{\tau}^2}\right)^{d/2}.
	\end{align*}
	Since the map from $\mathcal{M}$ to $\R^d$ is a contraction, $1 \leq \sqrt{\det (I + J^\top J)}$. Clearly, we also have
	\begin{align*}
	\mathrm{Vol}\left(\mathcal{A}_z\right) \leq \int_{\mathcal{A}_z} \sqrt{\det (I + J^\top J)} d\mathcal{L}_d (y) \leq \mathrm{Vol}\left(\mathcal{A}_z\right) \left(1 + \frac{C^2 \widehat{\widebar{\tau}}^2}{\widehat{\tau}^2}\right)^{d/2},
	\end{align*}
	which is enough to show the lemma.
\end{proof}

We can use this bound on $p(y)$ to simplify the integration of functions over $\mathcal{M}$. As we mentioned earlier, $- \log \E p_N$ is a function of an integral whose major contribution comes from the region $\widetilde{\mathcal{A}}_z$. (A crude bound suffices for the contribution from the region $\mathcal{M} \setminus \widetilde{\mathcal{A}}_z$). In the following lemma, we show that $\widehat{F}_{\widetilde{\mathcal{A}}_z}$ and $\widehat{F}_{\mathcal{A}_z}$ are very close together. By using the pushforward we can perform both integrals over $\mathcal{A}_z$ using Lebesgue measure.  To do so we need a bound on the ratio of $p(y)$ to $N_f$ (which we have) as well as a bound on the ratio between the integrands. We find that $\widehat{F}_{\widetilde{\mathcal{A}}_z}$ and $\widehat{F}_{\mathcal{A}_z}$ are within a constant $C_f$ of each other. By decreasing $\sigma$, $C_f$ can be made as small as desired.

\begin{statement}[]
\label{claim5}
Let $x \in \mathcal{M}_{\widehat{\widebar{\tau}}}$, and let $z$ be the projection of $x$ onto $\mathcal{M}$. Then, $\abs{\widehat{F}_{\widetilde{\mathcal{A}}_z} - \widehat{F}_{\mathcal{A}_z}} \leq C_f$, where
\begin{align*}
	C_f := \frac{d C^2 \widehat{\widebar{\tau}}^2}{2 \widehat{\tau}^2} + \left(\frac{\widehat{\widebar{\tau}}^4}{ \widehat{\tau}^2 } + \frac{2 \sqrt{2} \widehat{\widebar{\tau}}^3}{\widehat{\tau }}\right)\pi.
\end{align*}
\end{statement}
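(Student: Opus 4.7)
The natural approach is to reduce both integrals to integrals over the same set $\mathcal{A}_z$ with respect to Lebesgue measure, and then compare the integrands pointwise. For the first integral, I would use the pushforward map $y \mapsto y + F_{z, U_{\widehat{\widebar{\tau}}}^z}(y)$ studied in Theorem \ref{c11fctmanifold} and Lemma \ref{lemmapushforward}, writing
\begin{align*}
\int_{\widetilde{\mathcal{A}}_z} e^{-\pi\norm{x-y'}^2}\,d\mu_{\mathcal{M}}(y') = \int_{\mathcal{A}_z} e^{-\pi\norm{x-G(y)}^2}\,p(y)\,d\mathcal{L}_d(y),
\end{align*}
where $G(y) := y + F_{z, U_{\widehat{\widebar{\tau}}}^z}(y)$. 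Thus $\widehat{F}_{\widetilde{\mathcal{A}}_z} - \widehat{F}_{\mathcal{A}_z}$ is the log of the ratio of two integrals over $\mathcal{A}_z$, and it suffices to bound the pointwise ratio of integrands $e^{\pi(\norm{x-y}^2-\norm{x-G(y)}^2)}\cdot p(y)/N_f$ above and below: a uniform multiplicative factor on the integrands transfers to an additive bound after taking $-\log$.

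The density ratio $p(y)/N_f$ is directly controlled by Lemma \ref{lemmapushforward}, giving $\bigl|\log(p(y)/N_f)\bigr| \leq \tfrac{d}{2}\log(1 + C^2\widehat{\widebar{\tau}}^2/\widehat{\tau}^2) \leq \tfrac{dC^2\widehat{\widebar{\tau}}^2}{2\widehat{\tau}^2}$, which accounts for the first summand of $C_f$. For the exponent term, I expand
\begin{align*}
\norm{x-G(y)}^2 - \norm{x-y}^2 = -2\langle x-y, F(y)\rangle + \norm{F(y)}^2,
\end{align*}
and use that $F(y) \in T_z^\perp\mathcal{M}$ together with the fact that, in the coordinates where $z = 0$, the vector $x - z$ lies in $T_z^\perp\mathcal{M}$ and $y \in T_z\mathcal{M}$, so $\norm{x-y}^2 = \norm{x}^2 + \norm{y}^2 \leq 2\widehat{\widebar{\tau}}^2$ and hence $\norm{x-y} \leq \sqrt{2}\,\widehat{\widebar{\tau}}$.

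The key quantitative input I would use for $\norm{F(y)}$ is Federer's reach condition (Theorem \ref{federersreach}) applied to $a = z$ and $b = G(y) \in \mathcal{M}$: it gives $\norm{G(y) - \Pi_z G(y)} = \norm{F(y)} \leq \norm{G(y)}^2/(2\widehat{\tau})$, and since $\norm{G(y)}^2 = \norm{y}^2 + \norm{F(y)}^2$, solving the resulting quadratic inequality yields $\norm{F(y)} \leq \widehat{\widebar{\tau}}^2/\widehat{\tau}$ (to leading order, in the regime where $\widehat{\widebar{\tau}}/\widehat{\tau}$ is small, which is ensured by the $\sigma$-rescaling). Combining, $\abs{2\langle x-y, F(y)\rangle} \leq 2\sqrt{2}\,\widehat{\widebar{\tau}}\cdot \widehat{\widebar{\tau}}^2/\widehat{\tau} = 2\sqrt{2}\,\widehat{\widebar{\tau}}^3/\widehat{\tau}$ and $\norm{F(y)}^2 \leq \widehat{\widebar{\tau}}^4/\widehat{\tau}^2$, producing the second summand $\bigl(\widehat{\widebar{\tau}}^4/\widehat{\tau}^2 + 2\sqrt{2}\widehat{\widebar{\tau}}^3/\widehat{\tau}\bigr)\pi$ of $C_f$ after multiplying by $\pi$.

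The main obstacle I foresee is the bookkeeping in the normal-bundle bound for $\norm{F(y)}$: the Lipschitz-gradient bound from Theorem \ref{c11fctmanifold} alone would produce an extra constant $C$ that does not appear in $C_f$, so I would need to invoke Federer's theorem directly to obtain the sharp constant $1$. Once these pointwise bounds are in place, the passage from a uniform bound on the log-ratio of integrands to the claimed bound on $\widehat{F}_{\widetilde{\mathcal{A}}_z} - \widehat{F}_{\mathcal{A}_z}$ is immediate from the monotonicity of the logarithm.
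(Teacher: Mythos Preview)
Your proposal is correct and follows essentially the same approach as the paper: convert the integral over $\widetilde{\mathcal{A}}_z$ to one over $\mathcal{A}_z$ via the pushforward, bound the log-ratio of integrals by the supremum of the log-ratio of integrands, and split into the density-ratio term (handled by Lemma~\ref{lemmapushforward}) and the exponent term (handled by Federer's reach condition plus the $\sqrt{2}\,\widehat{\widebar{\tau}}$ bound on $\norm{x-y}$). The only cosmetic difference is that the paper obtains $\norm{F(y)} \leq \widehat{\widebar{\tau}}^2/\widehat{\tau}$ directly from $\norm{y'} \leq \sqrt{2}\,\widehat{\widebar{\tau}}$ (since $y' \in U_{\widehat{\widebar{\tau}}}^z$) rather than solving a quadratic, and phrases the exponent expansion via the law of cosines rather than the inner-product identity; these are the same computation.
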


\begin{proof}
Assume that $z$ is the origin and $T_z \mathcal{M}$ is identified with the first $d$ coordinates. The following chain of inequalities holds, where $y \in T_z \mathcal{M}$, $y' := y + F_{z, U_{\widebar{\tau}}^z}$ is a point on the manifold, and $J$ is the Jacobian of $ F_{z, U_{\widebar{\tau}}^z}$:
\begin{align*}
	\abs{\widehat{F}_{\widetilde{\mathcal{A}}_z} - \widehat{F}_{\mathcal{A}_z}} & = \abs{\log \frac{\displaystyle \int_{\mathcal{A}_z} e^{-\pi \norm{x - y}^2} N_f d \mathcal{L}_d(y)}{\displaystyle \int_{\widetilde{\mathcal{A}}_z} e^{-\pi \norm{x - y'}^2} d\mu_\mathcal{M}(y')}}\\
	& = \abs{\log \frac{\displaystyle \int_{\mathcal{A}_z} e^{-\pi \norm{x - y}^2} N_f d \mathcal{L}_d(y)}{\displaystyle \int_{{\mathcal{A}}_z} e^{-\pi \norm{x - y'}^2} p(y) d \mathcal{L}_d(y)}}\\
	& \leq \abs{\sup_{y \in \mathcal{A}_z} \log \frac{\displaystyle e^{-\pi \norm{x - y}^2} N_f}{\displaystyle e^{-\pi \left\|x - y'\right\|^2} p(y)}}\\
	& \leq \sup_{y \in \mathcal{A}_z} \abs{\log \left(1 + \frac{C^2 \widehat{\widebar{\tau}}^2}{\widehat{\tau}^2}\right)^{d/2}} + \sup_{y \in \mathcal{A}_z} \abs{-(x - y)^2 + \left(x - y'\right)^2} \pi.
\end{align*}

The first term in the last line comes from Lemma \ref{lemmapushforward}. A Taylor expansion (valid for $\abs{x} < 1$) shows that
\begin{align*}
	\log {(1 + x)^{d/2}} = \frac{d x}{2} - \frac{d x^2}{4} + \frac{d x^3}{6} + O(x^4).
\end{align*}
Therefore,
\begin{align*}
	\abs{\log \left(1 + \frac{C^2 \widehat{\widebar{\tau}}^2}{\widehat{\tau}^2}\right)^{d/2}} \leq \frac{d C^2 \widehat{\widebar{\tau}}^2}{2 \widehat{\tau}^2}
\end{align*}
as long as $\widehat{\widebar{\tau}}/\widehat{\tau}$ is smaller than a controlled constant. To bound the other term, we use the law of cosines in conjunction with Theorem \ref{federersreach}, which shows that 
\begin{align*}
\norm{y' - y} = \norm{y' - \Pi_z y'} & \leq \frac{\norm{y' - z}^2}{2 \widehat{\tau}}\\
& \leq \frac{\left(\sqrt{2}\widehat{\widebar{\tau}}\right)^2}{2 \widehat{\tau}}.
\end{align*}
Let $\theta$ be the angle between $y - y'$ and $x - y$. Then, we have:
\begin{align*}
\abs{\norm{y' - x}^2 - \norm{y - x}^2}\pi &= \abs{\norm{y - y'}^2 - 2\norm{y - y'}\norm{y - x} \cos \theta}\pi \\
& \leq \left(\left(\frac{\widehat{\widebar{\tau}}^2}{\widehat{\tau}}\right)^2 + 2\left(\frac{\widehat{\widebar{\tau}}^2}{\widehat{\tau}}\right)(\sqrt{2} \widehat{\widebar{\tau}})\right)\pi.
\end{align*}

Thus, $\abs{\widehat{F}_{\widetilde{\mathcal{A}}_z} - \widehat{F}_{\mathcal{A}_z}} \leq C_f$, where $C_f$ is defined in the statement of the lemma.
\end{proof}

To actually find the lower bound for $p_N$, we bound $\widehat{F}_{\mathcal{A}_z}$ in the next lemma by using a $d$-dimensional Gaussian concentration inequality. The upper bound is much simpler to derive; we include it as well. These bounds are important in verifying the second and third conditions of Theorem \ref{thm13fmn}. The third condition essentially says that our function is an approximate squared-distance function. That is, given a point $x \in \mathcal{M}_{1/\sqrt{2 \pi}}$ and its projection $z \in \mathcal{M}$, we should have upper and lower bounds that are close to $\norm{x - z}^2$. Since our putative asdf is $-\log p_N$, we need bounds for $p_N$ that are within a multiplicative factor of $e^{-\norm{x - z}^2 \pi}$. In the proof of the following lemma we find these pointwise bounds. 

The second condition requires that we find an upper bound for $\partial^{\alpha}(-\log p_N)$. This derivative consists of terms that have powers of $p_N$ in the denominator and combinations of powers of partial derivatives of $p_N$ in the numerator. Thus, we need a uniform lower bound for $p_N$ over $\mathcal{M}_{1/\sqrt{2 \pi}}$; this follows by taking the infimum of the pointwise bound over the tubular neighborhood. We also need bounds for $\abs{\E \partial^{\alpha} p_N}$, but we defer these to the proof of Lemma \ref{lem:secondcondition}.

\begin{statement}[]
\label{claim2}
$p_N(x)$ is bounded in expectation. More precisely, $\inf_{x \in \mathcal{M}_{1/\sqrt{2 \pi}}} \mathbb{E}p_N(x) \geq K_1$, where
\begin{align*}
	K_1 := N_f e^{-{1/2}} \left(1 - 2e^{-{{\left(\widehat{\widebar{\tau}} - \sqrt{d / (2 \pi)}\right)}^2} \pi}\right) e^{-C_f};
\end{align*}
furthermore, $\sup_{x \in \mathcal{M}_{1/\sqrt{2 \pi}}} \mathbb{E}p_N(x) \leq K_2$, where
\begin{align*}
K_2 := e^{C_f} N_f  + e^{-\widehat{\widebar{\tau}}^2 \pi / 2}.
\end{align*}
\end{statement}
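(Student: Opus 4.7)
My plan is to split $\mathbb{E} p_N(x) = \int_{\mathcal{M}} e^{-\pi\|x-y\|^2}\, d\mu_{\mathcal{M}}(y)$ into a ``local'' contribution over $\widetilde{\mathcal{A}}_z$ (where $z := \Pi_{\mathcal{M}} x$) and a ``tail'' contribution over $\mathcal{M}\setminus \widetilde{\mathcal{A}}_z$, then bound each piece separately. For the local piece, Lemma \ref{claim5} immediately sandwiches it by $e^{\pm C_f}$ times $\int_{\mathcal{A}_z} e^{-\pi\|x-y\|^2} N_f\, d\mathcal{L}_d(y)$, so the task reduces to estimating a Gaussian integral over a Euclidean ball. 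For $y\in T_z\mathcal{M}$, the vector $x-z$ is orthogonal to $z-y$, so $\|x-y\|^2 = \|x-z\|^2 + \|z-y\|^2$ factors the integrand. Since $x\in\mathcal{M}_{1/\sqrt{2\pi}}$, the factor $e^{-\pi\|x-z\|^2}$ lies in $[e^{-1/2},1]$.

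What remains is the $d$-dimensional Gaussian integral $\int_{\mathcal{A}_z} e^{-\pi\|z-y\|^2}\, d\mathcal{L}_d(y)$ over the tangent-ball of radius $\widehat{\widebar{\tau}}$. For the upper bound, I would just extend this to all of $T_z\mathcal{M}$ and use $\int_{\mathbb{R}^d} e^{-\pi\|y\|^2}\, dy = 1$, yielding a local contribution at most $e^{C_f} N_f$. For the lower bound, I would apply Gaussian concentration: if $Y$ has density $e^{-\pi\|y\|^2}$ on $\mathbb{R}^d$ then $\mathbb{E}\|Y\|\le \sqrt{d/(2\pi)}$ and $\|\cdot\|$ is $1$-Lipschitz, so the standard Gaussian concentration inequality gives $\Pr[\|Y\|>\widehat{\widebar{\tau}}] \le 2 e^{-\pi(\widehat{\widebar{\tau}} - \sqrt{d/(2\pi)})^2}$. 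This produces the factor $1 - 2e^{-\pi(\widehat{\widebar{\tau}}-\sqrt{d/(2\pi)})^2}$ appearing in $K_1$, so the local contribution is bounded below by $N_f e^{-C_f} e^{-1/2}(1 - 2e^{-\pi(\widehat{\widebar{\tau}}-\sqrt{d/(2\pi)})^2})$; dropping the nonnegative tail contribution yields $K_1$ exactly.

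For the tail, I would argue that every $y\in\mathcal{M}\setminus\widetilde{\mathcal{A}}_z$ satisfies $\|y-z\|\ge\widehat{\widebar{\tau}}$: by definition of $U^z_{\widehat{\widebar{\tau}}}$, either $\|y-\Pi_z y\|>\widehat{\widebar{\tau}}$ or $\|z-\Pi_z y\|>\widehat{\widebar{\tau}}$, and since $y-\Pi_z y \perp z-\Pi_z y$ we get $\|y-z\|^2 = \|y-\Pi_z y\|^2 + \|z-\Pi_z y\|^2 \ge \widehat{\widebar{\tau}}^2$. Then $\|x-y\| \ge \widehat{\widebar{\tau}} - 1/\sqrt{2\pi}$ by the triangle inequality, and since $\mu_\mathcal{M}$ is a probability measure the tail is at most $e^{-\pi(\widehat{\widebar{\tau}}-1/\sqrt{2\pi})^2}$, which is at most $e^{-\pi\widehat{\widebar{\tau}}^2/2}$ once $\widehat{\widebar{\tau}}$ is sufficiently large (automatic under the bandwidth scheme $\widehat{\widebar{\tau}} = \sigma^{-1/6}/\sqrt{2\pi}$ for small $\sigma$). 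Adding this to the local upper bound produces $K_2$. The main obstacle is getting the right constant in the Gaussian concentration step and ensuring the regime $\widehat{\widebar{\tau}} \gg \sqrt{d/(2\pi)}$ is consistent with the bandwidth-selection scheme; both are routine but pin down the precise form of $K_1$.
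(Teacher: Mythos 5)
Your proposal is correct and follows essentially the same route as the paper: sandwich the local contribution over $\widetilde{\mathcal{A}}_z$ via Lemma~\ref{claim5}, factor the Gaussian integrand using the orthogonality $x-z \perp z-y$, apply Gaussian concentration around $\mathbb{E}\|Y\|\le\sqrt{d/(2\pi)}$ for the lower bound, and bound the tail over $\M\setminus\widetilde{\mathcal{A}}_z$ using $\|z-y'\|\ge\widehat{\widebar{\tau}}$ and the triangle inequality. The only cosmetic difference is that the paper explicitly evaluates $\mathbb{E}\|Y\|^2=d/(2\pi)$ in spherical coordinates and invokes Jensen, whereas you quote the mean bound directly; the substance is identical.
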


\begin{proof}
Let $z$ be the projection of $x$ onto $\mathcal{M}$, and let $y \in T_z \mathcal{M}$.
\begin{align*}
	\mathbb{E}p_N(x) & \geq \int_{\widetilde{\mathcal{A}}_z} e^{-\pi \norm{x - y'}^2} d\mu_\mathcal{M}(y')\\
	& \geq \left(\int_{\mathcal{A}_z}e^{-\pi \norm{x - y}^2} N_f d\mathcal{L}_d(y)\right) e^{-C_f},
\end{align*}
where the second inequality is due to Lemma \ref{claim5}. By orthogonality, $(x - z)^\top (y - z) = 0$, so we rewrite the integral over $\mathcal{A}_z$ as follows: 
\begin{align*}
\int_{\mathcal{A}_z} e^{-\pi \norm{x - y}^2} N_f d\mathcal{L}_d(y) &= N_f e^{-\norm{x - z}^2 \pi} \int_{\mathcal{A}_z} e^{-\pi \norm{z - y}^2} d\mathcal{L}_d(y) \\
& = N_f e^{-\norm{x - z}^2 \pi} \mathbb{P}\left[\norm{y - z} \leq \widehat{\widebar{\tau}}\right],
\end{align*}
where the probability is with respect to a $d-$dimensional multivariate Gaussian with covariance $\frac{1}{2 \pi} I$.
Letting $z$ be the origin for simplicity, we know from standard Gaussian concentration results \citep*{boucheron2013concentration} that 
\begin{align*}
	\mathbb{P}\left[\abs{\norm{y} - \mathbb{E}\norm{y}} \leq t\right] \geq 1 - 2e^{-t^2\pi}
\end{align*}
for $t > 0$. We can calculate $\mathbb{E}\left[\|y\|^2\right]$ and then get a bound for $\E \left[\norm{y}\right]$ by using Jensen's inequality. 

Make the substitutions
\begin{align*}
\set{x_1 \mapsto r \cos{\phi_1}, x_{2 \leq i \leq d - 1} \mapsto r \cos{\phi_i} \prod_{j = 1}^{i - 1}\sin{\phi_j}, x_d \mapsto r \sin{\phi_{d - 1}} \prod_{j = 1}^{d - 2}\sin{\phi_j}},
\end{align*} 
and let
\begin{align*}
dV := r^{d - 1} \prod_{j = 1}^{d - 2} \sin^{d - j - 1} {\phi_j} dr d\phi_1 \dots d\phi_{d - 1}.
\end{align*}
We have
\begin{align*}
\E\left[\norm{y}^2\right] & = \int_{\R^d} \norm{y}^{2} e^{-\norm{y}^2\pi} d {\mathcal{L}_d}(y)\\
& = \int_{0}^{\infty} r^{d + 1} e^{-r^2\pi} dr \times \prod_{j = 1}^{d - 2} \int_{0}^{\pi} \sin^{d - j - 1} \phi_{j}  d\phi_{j} \\ & \quad \times \int_{0}^{2 \pi} d\phi_{d - 1}\\
& = \pi^{-(2 + d)/2} \Gamma\left(1 + d/2\right)\times \prod_{j = 1}^{d - 2} \frac{\sqrt{\pi} \Gamma\left((d - j)/2\right)}{\Gamma\left(1 + (d - j - 1)/2\right)} \times 2 \pi\\
& = \frac{d}{2 \pi}.
\end{align*}
The product in the third line telescopes to $\pi^{(d-2)/2}\bigg/\left(\Gamma(d/2)\right)$; simplifying yields the fourth line.

It follows that $\mathbb{E}\|y\| \leq \sqrt{d / (2 \pi)}$. Setting $t := \widehat{\widebar{\tau}} - \sqrt{d / (2 \pi)}$ (and assuming that $\sigma$ is small enough so $t > 0$), we see that
\begin{align*}
	\mathbb{P}\left[\norm{y} \leq \widehat{\widebar{\tau}}\right] \geq 1 - 2e^{-{{\left(\widehat{\widebar{\tau}} - \sqrt{d / (2 \pi)}\right)}^2} \pi}.
\end{align*}
Consequently,
\begin{align*}
	\mathbb{E}p_N(x) \geq N_f e^{-\norm{x - z}^2 \pi} \left(1 - 2e^{-{{\left(\widehat{\widebar{\tau}} - \sqrt{d / (2 \pi)}\right)}^2} \pi}\right)e^{-C_f}.
\end{align*}
The first part of the lemma follows by taking the infimum over $\mathcal{M}_{1/\sqrt{2 \pi}}$.

To find an upper bound, first write the expectation as
\begin{align*}
\E p_N(x) &= \int_{\mathcal{M}} e^{-\norm{x - y'}^2 \pi} d \mu_{\mathcal{M}}(y')\\
& = \int_{\widetilde{\mathcal{A}}_z} e^{-\norm{x - y'}^2 \pi} d \mu_{\mathcal{M}}(y') + \int_{\mathcal{M} \setminus \widetilde{\mathcal{A}}_z} e^{-\norm{x - y'}^2 \pi} d \mu_{\mathcal{M}}(y').
\end{align*}
The first term can be bounded as follows:
\begin{align*}
\int_{\widetilde{\mathcal{A}}_z} e^{-\norm{x - y'}^2 \pi} d \mu_{\mathcal{M}}(y') & \leq e^{C_f} \int_{{\mathcal{A}}_z} e^{-\norm{x - y}^2 \pi} N_f d \mathcal{L}_d(y)\\
&\leq e^{C_f} N_f e^{-\norm{x - z}^2 \pi} \int_{\mathcal{A}_z} e^{-\norm{z - y}^2 \pi} d \mathcal{L}_d (y)\\
& \leq e^{C_f} N_f e^{-\norm{x - z}^2 \pi}.
\end{align*}
Now consider $y' \in \mathcal{M} \setminus \widetilde{\mathcal{A}}_z$. Since $\norm{z - y'} \leq \norm{x - y'} + \norm{x - z}$ and $\norm{x - z} \leq 1/\sqrt{2 \pi} \leq \widehat{\widebar{\tau}} \leq \norm{z - y'}$, we have
$(\norm{x - z} - \widehat{\widebar{\tau}})^2 \leq \norm{x - y'}^2$. This gives us the following bound for the second term as long as $\widehat{\widebar{\tau}}$ is large enough:
\begin{align*}
\int_{\mathcal{M} \setminus \widetilde{\mathcal{A}}_z} e^{-\norm{x - y'}^2 \pi} d \mu_{\mathcal{M}}(y') & \leq e^{-\left(\norm{x - z} - \widehat{\widebar{\tau}}\right)^2 \pi} \times \int_{\mathcal{M} \setminus \widetilde{\mathcal{A}}_z} d \mu_{\mathcal{M}}(y')\\
& \leq e^{-\widehat{\widebar{\tau}}^2 \pi / 2}.
\end{align*}
Thus, for $x \in \mathcal{M}_{1/\sqrt{2 \pi}}$, 
\begin{align*}
\E p_N(x) & \leq e^{C_f} N_f  + e^{-\widehat{\widebar{\tau}}^2 \pi / 2}.
\end{align*}
\end{proof}

Note that the values we chose for $\sigma$ and $\widebar{\tau}$ are appropriate given our calculations in this section. For decreasing $\sigma$, we would like for $p(y)$ to grow closer to $N_f$ in Lemma \ref{lemmapushforward} and for $C_f$ to tend to zero in Lemma \ref{claim5}; we also need $\left(1 - 2e^{-{{\left(\widehat{\widebar{\tau}} - \sqrt{d / (2 \pi)}\right)}^2} \pi}\right)$, the Gaussian concentration probability, to grow closer to 1 in the previous lemma. Our choice of $\widebar{\tau} := \sigma^{5/6}$ is appropriate given these constraints. For $\sigma$ small enough, $\left(1 - 2e^{-{{\left(\widehat{\widebar{\tau}} - \sqrt{d / (2 \pi)}\right)}^2} \pi}\right) e^{-C_f} \approx 1$ and ${\mathrm{Vol} \left(\widetilde{\mathcal{A}}_z \right)} \bigg/ \left({\mathrm{Vol}\left(\mathcal{A}_z\right)} \times \widehat{V}\right) \approx {1}/{\widehat{V}} \approx {\sigma^d}/{V}$. Thus, $K_1 \approx e^{-1/2} {\sigma^d}/{V}$ and $K_2 \approx {\sigma^d}/{V}$.

\subsection{Finite sample bounds for \boldmath{$p_N$} and \boldmath{$\partial^\alpha (-\log p_N(x))$}}

In Lemma \ref{claim2}, we proved a statement about $\inf_{x \in \mathcal{M}_{1/\sqrt{2 \pi}}} \mathbb{E}p_N(x)$ whereas we really need a statement about $\inf_{x \in \mathcal{M}_{1/\sqrt{2 \pi}}} p_N(x)$. We can use methods from empirical processes to relate these quantities. Let $\mathcal{F}$ consist of functions $f:\mathcal{M} \rightarrow [0, 1]$ where each $f$ has the form $e^{-\pi \norm{x - y'}^2}$ with $y' \in \mathcal{M}$. Here, $x$ is fixed, and each $x \in \mathcal{M}_{1/\sqrt{2 \pi}}$ corresponds to a different $f$. Note that $p_N(x)$ is equivalent to $\E_N f$ and $\E p_N(x)$ is equivalent to $\E f$. We have the tools to prove that for $0 < \delta < 1$,
\begin{align*}
	\mathbb{P} \left[ \sup_{f \in \mathcal{F}} \abs{\mathbb{E}_N f - \mathbb{E}f} \leq \varepsilon_1 \right] \geq 1 - \delta,
\end{align*}
where $\varepsilon_1$ is a function of $\delta$ and $N$. We rewrite the form of the probability bound in part (a) of Lemma \ref{claim3} so that it is in terms of $p_N$ and $\E p_N$. 

In part (b) of Lemma \ref{claim3}, we prove a similar concentration bound for particular derivatives of $e^{\norm{x - z}^2} p_N$. Let $\mathcal{F}_{\beta, v}$ consist of functions $f: \mathcal{M} \rightarrow \R$ where each $f$ is of the form $\partial^\beta_v \left(e^{2 \pi x^\top y'}\right) e^{-\norm{y'}^2\pi}$ with $x \in \mathcal{M}_{1/\sqrt{2 \pi}}$, $y' \in \mathcal{M}$, and $v \in B_n(0, 1)$. These functions are involved in finding an upper bound for $\partial^{\alpha} \left(-\log\left(p_N(x)\right)\right)$.

\begin{statement}[]
\label{claim3}
Let $\mathcal{F}$ be the class of functions consisting of $e^{-\pi \norm{x - y'}^2}$ indexed by $x \in \mathcal{M}_{1/\sqrt{2 \pi}}$. For a given $\beta$ and $v \in B_n(0, 1)$, let $\mathcal{F}_{\beta, v}$ be the class of functions consisting of $\partial^\beta_v \left(e^{2 \pi x^\top y'}\right) e^{-\norm{y'}^2\pi}$ indexed by $x \in \mathcal{M}_{1/\sqrt{2 \pi}}$.

\begin{enumerate}[label=(\alph*)]
\item{
 For $0 < \delta < 1$,
\begin{align*}
	\mathbb{P} \left[ \sup_{x \in \mathcal{M}_{1/\sqrt{2 \pi}}} \abs{p_N(x) - \mathbb{E}p_N(x)} \leq \varepsilon_1 \right] \geq 1 - \delta,
\end{align*}
where
\begin{align*}
	\varepsilon_1 := \frac{24}{\sqrt{N}}\left(\frac{\sqrt{\pi n}}{2} + \sqrt{\log C'}\right) + \sqrt {\frac{2 \log(2/\delta)}{N}}
\end{align*}
and
\begin{align*}
C' := C\widehat{V} 100^d \left(2\sqrt{{2 \pi}/{e}}\right)^n.
\end{align*}
}

\item{
 For $0 < \delta < 1$,
\begin{align*}
	\mathbb{P} \left[ \sup_{x \in \mathcal{M}_{1/\sqrt{2 \pi}}} \abs{\partial^\beta_v \left(e^{2 \pi x^\top y'}\right) e^{-\norm{y'}^2\pi} - \mathbb{E}\left[\partial^\beta_v \left(e^{2 \pi x^\top y'}\right) e^{-\norm{y'}^2\pi}\right]} \leq \varepsilon_{1, \beta} \right] \geq 1 - \delta,
\end{align*}
where
\begin{align*}
	\varepsilon_{1, \beta} := \frac{24}{\sqrt{N}}\left(\frac{\sqrt{\pi n}}{2} + \sqrt{\log C'_{\beta}}\right) + \sqrt {\frac{2 \log(2/\delta)}{N}}
\end{align*}
and
\begin{align*}
C'_{\beta} := C\widehat{V} 100^d \left(2 \left(1 + \sqrt{5 + 4\beta}\right)^{\beta + 1} e^{\left(-1-2\beta + \sqrt{5 + 4 \beta}\right)/4} \left(\frac{2}{\pi}\right)^{-(\beta + 1)/2}\right)^n.
\end{align*}
}
\end{enumerate}
\end{statement}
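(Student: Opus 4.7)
The plan is to derive both (a) and (b) from the concentration inequality for Rademacher averages,
\[
\mathbb{P}\left[\sup_{h \in \mathcal{H}} \left|\mathbb{E}_{\mathcal{P}_N} h - \mathbb{E}_{\mathcal{P}} h\right| \le 2 \mathcal{R}_N(\mathcal{H}) + \sqrt{2 \log(2/\delta)/N}\right] \ge 1 - \delta,
\]
recalled in Section \ref{empproc}, combined with the modified Dudley integral (Theorem \ref{dudley}). The form of $\varepsilon_1$, namely $\tfrac{24}{\sqrt{N}}(\tfrac{\sqrt{\pi n}}{2} + \sqrt{\log C'})$, is exactly what Dudley's integral yields at $\gamma = 0$ when the class has sup-norm $\le 1$ and a covering bound of the form $N(\eta, \mathcal{H}, \|\cdot\|_{L_2(\mathcal{P}_N)}) \le C'/\eta^n$, since $\int_0^1 \sqrt{n \log(1/\eta) + \log C'}\, d\eta \le \tfrac{\sqrt{\pi n}}{2} + \sqrt{\log C'}$. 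So the task reduces to producing such a covering bound for each class.

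For part (a), the function $x \mapsto e^{-\pi\|x - y'\|^2}$ is uniformly bounded by $1$ and, by elementary calculus, globally $\sqrt{2\pi/e}$-Lipschitz in $x$ (the maximum of $2\pi\|x-y'\|e^{-\pi\|x-y'\|^2}$ is attained at $\|x-y'\| = 1/\sqrt{2\pi}$). Hence an Euclidean $\eta/\sqrt{2\pi/e}$-net of the index set $\mathcal{M}_{1/\sqrt{2\pi}}$ induces an $\eta$-net of $\mathcal{F}$ in $L_\infty$, which dominates $L_2(\mathcal{P}_N)$. To cover the tube $\mathcal{M}_{1/\sqrt{2\pi}}$, I first use Theorem \ref{lemmanifcovnum} to get an $\eta'$-net of $\mathcal{M}$ of size $O(\widehat{V}/\eta'^d)$ for $\eta'$ small enough that the $1/\widehat{\tau}^d$ term is dominated, and then cover the normal fibers (balls of radius $1/\sqrt{2\pi}$ in the $(n-d)$-dimensional normal space) with a standard Euclidean net. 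Multiplying these counts gives a cover of the tube of order $\widehat{V}\cdot 100^d \cdot (2\sqrt{2\pi/e})^n / \eta^n$, identifying the constant $C'$ in the statement.

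For part (b), write $g_x(y') = (2\pi v^\top y')^\beta e^{2\pi x^\top y' - \pi\|y'\|^2}$. Completing the square in $y'$ along the direction $v$ reduces the computation of $\sup_{y'} |g_x(y')|$ to a one-dimensional optimization of a polynomial-times-Gaussian, whose maximizer solves a quadratic in $\|y'\|$; its solution yields an $L_\infty$ bound of $(1+\sqrt{5+4\beta})^\beta e^{(-1-2\beta+\sqrt{5+4\beta})/4}(2/\pi)^{-\beta/2}$ uniformly in $x \in \mathcal{M}_{1/\sqrt{2\pi}}$ and $v \in B_n(0,1)$, matching the inner expression in $C'_\beta$. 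The Lipschitz constant in $x$ is this same expression multiplied by one additional factor of roughly $(1+\sqrt{5+4\beta})$ (accounting for the exponent $\beta+1$ rather than $\beta$ appearing in $C'_\beta$), since $\partial_x g_x$ introduces an extra $2\pi y'$. The same tube-covering argument as in (a), rescaled by this Lipschitz constant and with the class renormalized by its sup-norm so Dudley's integral runs on $[0,1]$, then yields $N(\eta,\mathcal{F}_{\beta,v},\|\cdot\|_{L_2(\mathcal{P}_N)}) \le C'_\beta/\eta^n$, and the bound follows.

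The main obstacle will be the bookkeeping in the covering-number estimate for the tube: reconciling the $d$-dimensional scaling from Theorem \ref{lemmanifcovnum} with the $(n-d)$-dimensional contribution of the normal fibers so that the constants $100^d$ and $(2\sqrt{2\pi/e})^n$ come out exactly as stated, and verifying that the chosen $\eta'$ is small enough for the $1/\widehat{\tau}^d$ term in Theorem \ref{lemmanifcovnum} to be absorbed. A secondary subtlety is the elementary but slightly fiddly one-variable optimization that produces the $\beta$-dependent pointwise maximum in part (b).
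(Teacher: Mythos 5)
Your overall strategy is the same as the paper's: the Rademacher concentration inequality plus the modified Dudley integral, reducing everything to a covering-number bound of the form $\mathcal{N}(\eta,\mathcal{F},\|\cdot\|_\infty)\le C'\eta^{-n}$ obtained by viewing the class as Lipschitz-parametrized by $x\in\mathcal{M}_{1/\sqrt{2\pi}}$, together with the one-variable optimization $\sup_{\rho\ge 0}(2\pi)^{\beta+1}\rho^{\beta+1}e^{-\pi\rho^2+\sqrt{2\pi}\rho}$ for part (b) (maximizer at $\rho=(1+\sqrt{5+4\beta})/(2\sqrt{2\pi})$), and the evaluation $\int_0^1\sqrt{-n\log\eta}\,d\eta=\tfrac{\sqrt{\pi n}}{2}$. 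All of this matches the paper.

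There is, however, a gap in your construction of the tube cover. You propose to take a fixed $1/100$-net of $\mathcal{M}$ (to produce the factor $\widehat{V}\,100^d$) and then cover only the $(n-d)$-dimensional normal fibers at scale $\eta/L$. This does not yield an $(\eta/L)$-net of the tube $\mathcal{M}_{1/\sqrt{2\pi}}$, because the tangential resolution remains pinned at $1/100$, which exceeds $\eta/L$ once $\eta$ is small — so points of the tube between adjacent net points on $\mathcal{M}$ are not within $\eta/L$ of any center. Consistently carrying out your decomposition would require refining the net on $\mathcal{M}$ to scale $\eta/L$, which replaces $\widehat{V}\,100^d$ by $\widehat{V}(L/\eta)^d$ and gives a total of order $\widehat{V}(L/\eta)^n$ with no $100^d$ factor. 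Moreover, you also cannot simultaneously get both $100^d$ and the full power $(L/\eta)^n$ from a $(n-d)$-dimensional fiber cover, since that cover contributes only $(L/\eta)^{n-d}$; your stated product $\widehat{V}\cdot 100^d\cdot(2\sqrt{2\pi/e})^n/\eta^n$ is off by a factor $(L/\eta)^d$ relative to your own construction. The paper avoids this by taking the $1/100$-net of $\mathcal{M}$, placing full $n$-dimensional unit balls at each net point (these cover the tube since $1/\sqrt{2\pi}+1/100<1$), and then covering those unit balls at scale $\eta/L$. This gives $C\widehat{V}\,100^d\bigl(1+L/\eta\bigr)^n$, which — after using $L/\eta>1$ — is exactly the stated $C'\eta^{-n}$. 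In short: the coarse $1/100$-net is a preliminary localization step, and the fine $\eta/L$ covering happens in all $n$ directions, not just the normal ones.
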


\begin{proof}
We can bound $\sup_{x} \left|p_N(x) - \mathbb{E}p_N(x)\right|$ through a method from empirical processes by first determining the covering number of $\mathcal{F}$ and then using Dudley's integral. Since $\mathcal{F}$ is a class of Lipschitz functions parametrized by points in $\mathcal{M}_{\widehat{\widebar{\tau}}}$, we can relate its covering number to the covering number of this parameter space.

From empirical process theory, we know that 
\[\mathbb{P} \left[ \sup_{f \in \mathcal{F}} \abs{p_N(x) - \mathbb{E}p_N(x)} \leq 2\mathcal{R}_N(\mathcal{F}) + \sqrt {\frac{2 \log(2/\delta)}{N}}\right] \geq 1 - \delta.\]
$\mathcal{R}_N(\mathcal{F})$ is the Rademacher complexity of $\mathcal{F}$, which can be bounded using Theorem \ref{dudley}. Let $\mathcal{N}(\eta, \mathcal{F}, \norm{\cdot})$ be the covering number at scale $\eta$ with respect to norm $\norm{\cdot}$. Then, 
\begin{align*}
	\mathcal{R}_N(\mathcal{F}) & \leq \inf_{\varepsilon' \geq 0} \left\{4\varepsilon' + 12\int_{\varepsilon'/4}^{\sup_{f \in \mathcal{F}} \sqrt{\hat{\mathbb{E}}[f^2]}} \sqrt{\frac{\log \mathcal{N}\left(\eta, \mathcal{F}, \norm{\cdot}_{\mathcal{L}_2(P_N)}\right)}{N}} d\eta \right\}\\
	& \leq \inf_{\varepsilon' \geq 0} \left\{4\varepsilon' + 12\int_{\varepsilon'/4}^{\sup_{f \in \mathcal{F'}} \sqrt{\hat{\mathbb{E}}[f^2]}} \sqrt{\frac{\log \mathcal{N}\big(\eta, \mathcal{F}, \norm{\cdot}_{\infty}\big)}{N}} d\eta \right\}.
\end{align*}
The second inequality is well-known. Each $f \in \mathcal{F}$ is parametrized by $x \in \mathcal{M}_{1/\sqrt{2 \pi}}$ and is at most $L$-Lipschitz in this parameter. If we can calculate $L$, we can also bound the covering number of $\mathcal{F}$ by relating it to the covering number of the tubular neighborhood.

That is,
\begin{align*}
\mathcal{N}\big(\eta, \mathcal{F}, \norm{\cdot}_\infty \big) &\leq \mathcal{N}\big(\eta/L, \mathcal{M}_{1/\sqrt{2 \pi}}, \norm{\cdot}_2 \big)\\
 &\leq C\widehat{V} 100^d \left(\frac{1}{\eta/L} + 1\right)^n,
\end{align*}
 where the second line follows from taking a 1/100-net of $\mathcal{M}$, placing unit $n$-balls at each net point, and then finding an $\eta/L$-net of those.

Now we find $L$. For simplicity, assume $x$ has coordinates $\left(x_1, \dots, x_n\right)$ that have been centered around any point on the manifold. By the symmetry of $\norm{x}$, we only need to consider one coordinate.
\begin{align*}
\Lip(f) = \sup_{f \in \mathcal{F}} \norm{\nabla f}
& \leq \sup_{x \in \mathcal{M}} \norm{\nabla \left(e^{-\norm{x}^2 \pi}\right)}\\
& \leq \sup_{x \in \R^n} \abs{\frac{\partial}{\partial x_1}e^{-\norm{x}^2 \pi}}\\
& = 2 e^{-x_1^2 \pi} x_1 \bigg|_{x_1 = {1}/{\sqrt{2 \pi}}}\\
& = \sqrt{\frac{2 \pi}{e}}\\
& =: L.
\end{align*}
Since $\eta$ ranges between 0 and 1, $L/\eta > 1$. Define $C' := C\widehat{V} 100^d (2\sqrt{{2 \pi}/{e}})^n$; then,
 \begin{align*}
 \mathcal{N}\big(\eta, \mathcal{F}, \norm{\cdot}_\infty \big) \leq C'\eta^{-n}.
 \end{align*}

Using the monotonicity of $\log$ and the square root,
\begin{align*}
\mathcal{R}_N(\mathcal{F}) & \leq 12\int_{0}^{1} \left(\sqrt{\frac{\log C'}{N}} + \sqrt{\frac{-n \log \eta}{N}}\right) d\eta\\
&= \frac{12}{\sqrt{N}}\left(\frac{\sqrt{\pi n}}{2} + \sqrt{\log C'}\right).
\end{align*}

Thus, with high probability,
\begin{align*}
\sup_{f \in \mathcal{F}} |p_N(x) - \mathbb{E}p_N(x)| & \leq \frac{24}{\sqrt{N}}\left(\frac{\sqrt{\pi n}}{2} + \sqrt{\log C'}\right) 
+ \sqrt {\frac{2 \log(2/\delta)}{N}},
\end{align*}
which proves (a).

The proof of part (b) is nearly the same, with the only difference being in the covering number of the parameter space. If each $f \in \mathcal{F}_{\beta, v}$ is at most $L_{\beta}$-Lipschitz, then
\begin{align*}
\mathcal{N}\big(\eta, \mathcal{F}_{\beta}, \norm{\cdot}_\infty \big) \leq C'_{\beta}\eta^{-n},
\end{align*}
where $C'_{\beta} := C\widehat{V} 100^d (2 L_{\beta})^n$. Since
\begin{align*}
\partial^\beta_v \left(e^{2 \pi x^\top y'}\right) e^{-\norm{y'}^2\pi} = e^{2 \pi x^\top y'} (2 \pi)^{\beta} \left(y'^\top v\right)^{\beta} e^{-\norm{y'}^2\pi},
\end{align*}
we have
\begin{align*}
\Lip(f) & = \sup_{f \in \mathcal{F}_{\beta, v}} \norm{\nabla f}\\
& \leq \sup_{x \in \mathcal{M}_{1/\sqrt{2 \pi}}, y' \in \R^n, v \in B_n(0, 1)} \norm{\nabla \left(e^{2 \pi x^\top y'}\right) (2 \pi)^{\beta} \left(y'^\top v\right)^{\beta} e^{-\norm{y'}^2\pi}}\\
& = \sup_{x \in \mathcal{M}_{1/\sqrt{2 \pi}}, y' \in \R^n, v \in B_n(0, 1)} (2 \pi)^{\beta} \abs{y'^\top v}^{\beta} e^{-\norm{y'}^2\pi} \sqrt{\sum_{i = 1}^{n} \left(2 \pi y'_i e^{2\pi x^\top y'} \right)^2}\\
& = \sup_{x \in \mathcal{M}_{1/\sqrt{2 \pi}}, y' \in \R^n, v \in B_n(0, 1)} (2 \pi)^{\beta} \abs{y'^\top v}^{\beta} e^{-\norm{y'}^2\pi} 2 \pi e^{2\pi x^\top y'} \norm{y'}\\
& \leq \sup_{y' \in \R^n} (2 \pi)^{\beta + 1} \norm{y'}^{\beta + 1} e^{-\norm{y'}^2\pi + \sqrt{2 \pi} \norm{y'}}.
\end{align*} 
In the third line, $\set{y'_1, \dots, y'_n}$ are the components of $y'$. The final line follows by the Cauchy-Schwarz inequality, which shows that $\abs{y'^\top v} \leq \norm{y'} \norm{v} \leq \norm{y'}$ and $2 \pi x^\top y' \leq 2\pi \norm{x} \norm{y'} \leq \sqrt{2 \pi} \norm{y'}$. Differentiating with respect to $\norm{y'}$ and setting equal to zero shows that the supremum is achieved at $\norm{y'} = (1 + \sqrt{5 + 4 \beta})/(2 \sqrt{2 \pi})$. We can substitute this back in to set
\begin{align*}
L_{\beta} := \left(1 + \sqrt{5 + 4\beta}\right)^{\beta + 1} e^{\left(-1-2\beta + \sqrt{5 + 4 \beta}\right)/4} \left(\frac{2}{\pi}\right)^{-(\beta + 1)/2}.
\end{align*}
\end{proof}

It follows directly that $K_1 - \varepsilon_1 \leq p_N \leq K_2 + \varepsilon_1$ with high probability. For large enough $N$, $K_1/2 \leq p_N \leq 2K_2$. In the next lemma, we prove that a corresponding result holds for $\partial^\alpha (-\log p_N(x))$ (which is exactly the second condition from Theorem \ref{thm13fmn}). The derivation is more technical but the intuition is based on the arguments in Section \ref{bounding epn}.

\begin{statement}[]
	\label{lem:secondcondition}
$\partial^\alpha (-\log p_N(x)) \leq C_0$ for $x \in B_n(0,1/\sqrt{2 \pi})$, $|\alpha| \leq k$, and $C_0$ depending only on $n$ and $k$.
\end{statement}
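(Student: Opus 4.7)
The plan is to apply Faà di Bruno's formula, expanding $\partial^\alpha(-\log p_N)(x)$ as a finite combinatorial sum of terms of the form
\[
c_\pi \, \frac{\prod_{j=1}^{m} \partial^{\beta_j} p_N(x)}{p_N(x)^m},
\]
where $\pi$ ranges over partitions of $\alpha$, $\sum_j |\beta_j|=|\alpha|$, and $m\leq |\alpha|$. This reduces the task to (i) a uniform lower bound on $p_N(x)$ and (ii) uniform upper bounds on $|\partial^\beta p_N(x)|$ for $|\beta|\leq k$, with both bounds scaling in the \emph{same} power of the normalizing factor $N_f$ so that the ratios cancel. The case $|\alpha|=0$ is best handled separately: since the actual asdf is $-\log p_N + \log N_f$, the pointwise bound follows immediately from $K_1/2\leq p_N\leq 2K_2$ together with $K_1,K_2\asymp N_f$ (Lemmas \ref{claim2} and \ref{claim3}(a), for $N$ large enough that $\varepsilon_1\leq K_1/2$).

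For (i), Lemmas \ref{claim2} and \ref{claim3}(a) already give $p_N(x)\geq K_1/2$ uniformly on $\mathcal{M}_{1/\sqrt{2\pi}}$ with high probability. For (ii), I reduce to directional derivatives $\partial^\beta_v$ and factor $p_N(x)=e^{-\pi\|x\|^2}q_N(x)$ with $q_N(x):=N^{-1}\sum_i e^{2\pi x^\top y_i}e^{-\pi\|y_i\|^2}$. Leibniz's rule then yields
\[
\partial^\beta_v p_N(x) = \sum_{\gamma=0}^{\beta}\binom{\beta}{\gamma}\,\partial^{\beta-\gamma}_v\!\left(e^{-\pi\|x\|^2}\right)\,\partial^\gamma_v q_N(x).
\]
The first factor is a Hermite-type polynomial in $x$ times $e^{-\pi\|x\|^2}$, bounded on $B_n(0,1/\sqrt{2\pi})$ by a constant depending only on $n$ and $\beta$. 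The second factor is precisely the empirical average controlled by Lemma \ref{claim3}(b), which guarantees $|\partial^\gamma_v q_N(x)-\E\partial^\gamma_v q_N(x)|\leq \varepsilon_{1,\gamma}$ uniformly in $x$ with high probability.

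It remains to bound $|\E\partial^\gamma_v q_N(x)|$, which I would do by repeating the argument of Lemma \ref{claim2} with an extra polynomial factor. Completing the square shows
\[
\E\partial^\gamma_v q_N(x) = (2\pi)^\gamma e^{\pi\|x\|^2}\int_\mathcal{M}(v^\top y')^\gamma e^{-\pi\|x-y'\|^2}\,d\mu_\mathcal{M}(y').
\]
Expanding $v^\top y'=v^\top x+v^\top(y'-x)$ and splitting the integral over $\widetilde{\mathcal{A}}_z$ and its complement (using Lemmas \ref{lemmapushforward} and \ref{claim5} to convert the first piece to a Lebesgue integral over $\mathcal{A}_z\subset T_z\mathcal{M}$ against a polynomial-times-Gaussian, and a crude Gaussian tail estimate for the second) yields $|\E\partial^\gamma_v q_N(x)|\lesssim N_f$ with constant depending only on $n$ and $\gamma$. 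Choosing $N$ large enough that $\varepsilon_{1,\gamma}$ is dominated by this $N_f$-scale bound gives $|\partial^\beta_v p_N(x)|\lesssim N_f$; substituting into the Faà di Bruno expansion, each term contributes $O(N_f^m/N_f^m)=O(1)$, producing the desired $C_0$ depending only on $n$ and $k$. The main obstacle is the bookkeeping at this last step: one must verify cleanly that the $N_f$-scale upper bounds from (ii) and the $N_f$-scale lower bound from (i) really do cancel (so that the geometric parameters $V$, $\tau$, $\sigma$ drop out of the final constant), and that $N$ can be chosen uniformly so that all $O(k)$ concentration events from Lemma \ref{claim3} hold simultaneously.
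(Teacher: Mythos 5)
Your proposal follows essentially the same route as the paper: both hinge on Fa\`a di Bruno to reduce the problem to a uniform lower bound on the density and uniform upper bounds on directional derivatives, both use the factorization $p_N(x)=e^{-\pi\|x-z\|^2}q_N(x)$ (the paper centers coordinates at $z=\Pi_{\mathcal{M}}x$, making its $q_N$ coincide with yours), both invoke Lemmas~\ref{claim2} and \ref{claim3} for the lower bound and the empirical concentration, and both bound $\abs{\E\partial^\gamma_v q_N}$ by splitting the integral over $\widetilde{\mathcal{A}}_z$ and its complement using Lemmas~\ref{lemmapushforward} and \ref{claim5} plus Gaussian-moment calculations, observing the $N_f$ cancellation at the end. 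The only cosmetic difference is that the paper peels off $\pi\|x-z\|^2 = -\log e^{-\pi\|x-z\|^2}$ first and applies Fa\`a di Bruno to $-\log q_N$, whereas you apply it directly to $-\log p_N$ and then use Leibniz; your explicit treatment of the $|\alpha|=0$ case (noting that the $\log N_f$ shift is what makes the zeroth-order term bounded) is a small, valid refinement the paper glosses over.
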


\begin{proof}
Start by defining
\begin{align*}
q_N(x) = \frac{1}{N} \sum\limits_{i = 1}^{N}   e^{-\norm{z - y_i}^2\pi} e^{-2(x - z)^\top (z - y_i)\pi},
\end{align*}
where $z$ is the projection of $x$ onto $\mathcal{M}$. Then,
\begin{align*}
\partial^\alpha (-\log p_N(x)) & = \partial^\alpha \left(-\log \left(e^{-\norm{x - z}^2 \pi} q_N(x)\right)\right).
\end{align*}
A result due to \cite{nemirovski2004interior} shows that
\begin{align*}
\sup_{|v_i| \leq 1} \abs{\partial_{v_1} \dots \partial_{v_{\abs{\alpha}}} F(x)} \leq \sup_{|v| \leq 1} \abs{\partial_{v}^{\abs{\alpha}} F(x)}
\end{align*}
for $C^k$-smooth $F$, implying that we do not need to bound mixed partials. 

It is straightforward to calculate $C_{0, 1} := \sup \abs{\partial^\alpha \left(-\log \left(e^{-\norm{x - z}^2 \pi}\right)\right)}$. (The supremum is also over $\abs{\alpha} \leq k$). To get an upper bound for $\partial^\alpha \left(-\log q_N(x)\right)$, we first write it as an expression involving powers of $q_N(x)$ and partials of $q_N(x)$. For example, if $\alpha = \set{x_1, x_1, x_1, x_1}$,
\begin{align*}
\frac{\partial^4 (-\log q_N(x))}{\partial x_1^4} &= 6 \frac{\left(\frac{\partial q_N}{\partial x_1}\right)^4}{q_N(x)^4} - 12 \frac{\left(\frac{\partial q_N}{\partial x_1}\right)^2 \left(\frac{\partial^2 q_N}{\partial x_1^2}\right)}{q_N(x)^3} + 3 \frac{\left(\frac{\partial^2 q_N}{\partial x_1^2}\right)^2}{q_N(x)^2}\\
 &+ 4 \frac{\left(\frac{\partial q_N}{\partial x_1}\right) \left(\frac{\partial^3 q_N}{\partial x_1^3}\right)}{q_N(x)^2} - \frac{\left(\frac{\partial^4 q_N}{\partial x_1^4}\right)}{q_N(x)}.
\end{align*}
Faa di Bruno's formula is an explicit representation of this expression; the number of terms and the coefficients depend on $\abs{\alpha}$. We can find a suitable $C_0$ if we can calculate a lower bound for $q_N$ and an upper bound for $\abs{{\partial^\beta_v q_N}}$ where $\beta \leq \abs{\alpha}$. The first bound follows from two previous lemmas. Lemma \ref{claim2} shows that $\E p_N(x) \geq K_1$, and Lemma \ref{claim3} shows that $p_N$ is within $\varepsilon_1$ of its expectation with high probability. Since $q_N(x) \geq p_N(x)$ and $\varepsilon_1$ can be made smaller than $K_1/2$, $q_N \geq K_1/2$ with high probability for $N$ sufficiently large. $K_1$ is a function of $d$, and $\varepsilon_1$ is a function of $n$.

To bound the partials of $q_N$, we start off by using the second part of Lemma \ref{claim3}, which shows
\begin{align*}
\abs{{\partial^\beta_v q_N}}  \leq \abs{\E {\partial^\beta_v q_N}} + \varepsilon_{1, \beta}.
\end{align*}
Let $z$ be the origin and let the first $d$ coordinates lie in $T_z \mathcal{M}$. We can write the expectation as
\begin{align*}
\abs{\E {\partial^\beta_v q_N}} & = \abs{\int_{\mathcal{M}} \partial^\beta_v \left(e^{2 \pi x^\top y'}\right) e^{-\norm{y'}^2\pi} d \mu_{\mathcal{M}}(y')}\\
& \leq \abs{\int_{\widetilde{\mathcal{A}}_z} e^{2 \pi x^\top y'} (2 \pi)^{\beta} \left(y'^\top v\right)^{\beta} e^{-\norm{y'}^2\pi} d \mu_{\mathcal{M}}(y')}\\ &+ \abs{\int_{\mathcal{M} \setminus \widetilde{\mathcal{A}}_z} e^{2 \pi x^\top y'} (2 \pi)^{\beta} \left(y'^\top v\right)^{\beta} e^{-\norm{y'}^2\pi} d \mu_{\mathcal{M}}(y')}.
\end{align*}

We first bound the integral over $\mathcal{M} \setminus \widetilde{\mathcal{A}}_z$. For $\widehat{\widebar{\tau}}$ large enough, the local extrema of $\partial^\beta_v \left(e^{2 \pi x^\top y'}\right) e^{-\norm{y'}^2\pi}$ with respect to $y'$ lie within $U_{\widehat{\widebar{\tau}}}^{z}$.  Since 
\begin{align*}
\lim_{\norm{y'} \rightarrow \infty} \partial^\beta_v \left(e^{2 \pi x^\top y'}\right) e^{-\norm{y'}^2\pi} = 0,
\end{align*}
$\abs{\partial^\beta_v \left(e^{2 \pi x^\top y'}\right) e^{-\norm{y'}^2\pi}}$ is decreasing with increasing $\norm{y'}$ in $\mathcal{M} \setminus \widetilde{\mathcal{A}}_z$. The following holds, where $y_0 \in \partial \widetilde{\mathcal{A}}_z$:
\begin{align*}
& \abs{\int_{\mathcal{M} \setminus \widetilde{\mathcal{A}}_z} e^{2 \pi x^\top y'} (2 \pi)^{\beta} \left(y'^\top v\right)^{\beta} e^{-\norm{y'}^2\pi} d \mu_{\mathcal{M}}(y')}\\
\leq &  e^{2 \pi x^\top y_0} (2 \pi)^{\beta} \left(y_0^\top v\right)^{\beta} e^{-\norm{y_0}^2\pi} \int_{\mathcal{M} \setminus \widetilde{\mathcal{A}}_z} d \mu_{\mathcal{M}}(y')\\
\leq & e^{2 \pi \norm{x} \norm{y_0}} (2 \pi)^{\beta} \norm{y_0}^{\beta} \norm{v}^{\beta} e^{-\norm{y_0}^2\pi}\\
\leq & (2 \sqrt{2} \pi \widehat{\widebar{\tau}})^{\beta} e^{-2 \pi \widehat{\widebar{\tau}}^2 + 2 \sqrt{\pi}\widehat{\widebar{\tau}} }.
\end{align*}

The integral over $\widetilde{\mathcal{A}}_z$ can be bounded by relating it to the corresponding integral over $\mathcal{A}_z$. Let $y$ be the projection of $y' \in \widetilde{\mathcal{A}}_z$ onto $\mathcal{A}_z$. Then,
\begin{align*}
& \abs{\int_{\widetilde{\mathcal{A}}_z} e^{2 \pi x^\top y'} (2 \pi)^{\beta} \left(y'^\top v\right)^{\beta} e^{-\norm{y'}^2\pi} d \mu_{\mathcal{M}}(y')}\\
= & \abs{\int_{{\mathcal{A}}_z} e^{2 \pi x^\top y'} (2 \pi)^{\beta} \left(y'^\top v\right)^{\beta} e^{-\norm{y'}^2\pi} p(y) d {\mathcal{L}_d}(y)}\\
\leq & \abs{\int_{{\mathcal{A}}_z} e^{2 \pi x^\top y} \left(y'^\top v\right)^{\beta} e^{-\norm{y}^2\pi} d {\mathcal{L}_d}(y)} \times N_f \left(1 + \frac{C^2 \widehat{\widebar{\tau}}^2}{\widehat{\tau}^2}\right)^{d/2}\\
& \quad \times \sup_{x, y'} \abs{{e^{2 \pi x^\top\left(y' - y\right)}}} \times \sup_{y'} \abs{e^{\left(\norm{y'}^2 - \norm{y}^2\right) \pi}} \times (2 \pi)^{\beta} \\
\leq & \abs{\int_{{\mathcal{A}}_z} \left(y'^\top v\right)^{\beta} e^{-\norm{y}^2\pi} d {\mathcal{L}_d}(y)} \times N_f \left(1 + \frac{C^2 \widehat{\widebar{\tau}}^2}{\widehat{\tau}^2}\right)^{d/2}\\
& \quad \times e^{2 \sqrt{2 \pi} \taubarhat^2/\left(2 \tauhat\right)} \times e^{\pi \taubarhat^4/\tauhat^2} \times (2 \pi)^{\beta}.
\end{align*}
The third line comes from relating $p(y)$ and $N_f$ (Lemma \ref{lemmapushforward}) and bounding the change in the integrand due to projecting $y'$ onto $\mathcal{A}_z$. We do not project $\left(y'^\top v\right)^{\beta}$ because it can equal zero. The fourth line follows by noting that $x^\top y = 0$ by orthogonality and that $\norm{y' - y} \leq {\taubarhat^2}/{\tauhat}$ by Federer's reach condition. The reach condition also shows that $\left(y'^{\top} v\right)^{\beta}$ is a polynomial whose terms either lie in $\R^d$ or have arbitrarily small coefficients. This can be used to bound the integral. Starting off by applying the triangle inequality for integrals and then the Cauchy-Schwarz inequality, we have
\begin{align*}
\abs{\int_{{\mathcal{A}}_z} \left(y'^\top v\right)^{\beta} e^{-\norm{y}^2\pi} d {\mathcal{L}_d}(y)} & \leq \int_{{\mathcal{A}}_z} \norm{y'}^{\beta} \norm{v}^{\beta} e^{-\norm{y}^2\pi} d {\mathcal{L}_d}(y)\\
& \leq \int_{{\mathcal{A}}_z} \left(\norm{y} + \norm{y' - y}\right)^{\beta} e^{-\norm{y}^2\pi} d {\mathcal{L}_d}(y)\\
& \leq \int_{{\mathcal{A}}_z} \norm{y}^{\beta} e^{-\norm{y}^2\pi} d {\mathcal{L}_d}(y) \\ & \quad + \sum_{i = 1}^{\beta} \binom{\beta}{i} \left(\frac{\taubarhat^2}{\tauhat}\right)^{i}\int_{{\mathcal{A}}_z} \norm{y}^{\beta - i} e^{-\norm{y}^2\pi} d {\mathcal{L}_d}(y) \\
& \leq 2 \int_{{\mathcal{A}}_z} \norm{y}^{\beta} e^{-\norm{y}^2\pi} d {\mathcal{L}_d}(y).
\end{align*}
The second line holds because $\norm{v} \leq 1$ and $\norm{y'}$ can be bounded using the triangle inequality. The third line follows after expanding $\left(\norm{y} + \norm{y' - y}\right)^{\beta}$, substituting in the bound for $\norm{y' - y}$, and rearranging. Each term in the summation can be made arbitrarily small, which gives the fourth line. This integral is a function of the moments (of order $\beta$ or less) of a $d$-dimensional Gaussian with covariance $1/\left(2 \pi\right) I$. We can calculate it using spherical coordinates, following the calculation of $\E\left[\norm{y}^2\right]$ in Lemma \ref{claim2}. We have
\begin{align*}
2 \int_{{\mathcal{A}}_z} \norm{y}^{\beta} e^{-\norm{y}^2\pi} d {\mathcal{L}_d}(y) &\leq 2 \int_{\R^d} \norm{y}^{\beta} e^{-\norm{y}^2\pi} d {\mathcal{L}_d}(y)\\
& = 2 \int_{0}^{\infty} r^{\beta + d - 1} e^{-r^2\pi} dr \times \prod_{j = 1}^{d - 2} \int_{0}^{\pi} \sin^{d - j - 1} \phi_{j}  d\phi_{j} \\ & \quad \times \int_{0}^{2 \pi} d\phi_{d - 1}\\
& = \pi^{-(\beta + d)/2} \Gamma\left(\frac{\beta + d}{2}\right)\times \prod_{j = 1}^{d - 2} \frac{\sqrt{\pi} \Gamma\left((d - j)/2\right)}{\Gamma\left(1 + (d - j - 1)/2\right)} \times 2 \pi\\
& = 2 \pi^{-\beta/2} \frac{\Gamma\left((\beta + d)/2\right)}{\Gamma(d/2)}\\
& =: C_{\beta}.
\end{align*}

Therefore, for large enough $\taubarhat$,
\begin{align*}
\abs{\E {\partial^\beta_v q_N}} & \leq 2 (2 \pi)^{\beta} N_f C_{\beta} + (2 \sqrt{2} \pi \widehat{\widebar{\tau}})^{\beta} e^{-2 \pi \widehat{\widebar{\tau}}^2 + 2 \sqrt{\pi}\widehat{\widebar{\tau}} }\\
& \leq 3 (2 \pi)^{\beta} N_f C_{\beta}.
\end{align*}
If $N$ is large enough, $\varepsilon_{1, \beta}$ will be smaller than $(2 \pi)^{\beta} N_f C_{\beta}$ with high probability, implying that $\abs{{\partial^\beta_v q_N}} \leq 4(2 \pi)^{\beta} N_f C_{\beta}$. Let $\beta' \leq \abs{\alpha}$ be the value of $\beta$ for which this is maximized. Each term of $\abs{\partial^\alpha \left(-\log q_N(x)\right)}$ is bounded above in absolute value by a multiple of ${4(2 \pi)^{\beta'} N_f C_{\beta'}}/{\left(K_1/2\right)}$ raised to a power less than or equal to $\abs{\alpha}$. Therefore, using the triangle inequality and letting $\abs{\alpha} \leq k$, $\abs{\partial^\alpha \left(-\log q_N(x)\right)} \leq C_{0, 2}$, a constant. The factors of $N_f$ in $K_1$ and $\abs{{\partial^\beta_v q_N}}$ cancel each other out, so $C_{0, 2}$ is a function of $n$ and $k$. Setting $C_0 := C_{0, 1} + C_{0, 2}$ yields the lemma.
\end{proof}

\subsection{\boldmath{$-\log p_N(x) + \log N_f$} is an asdf}
In the next two lemmas, we prove that the third condition of Theorem \ref{thm13fmn} holds. Recall from the proof of Lemma \ref{claim2} that $\E p_N$ can be bounded above and below to within a multiplicative factor of $N_f e^{-\norm{x - z}^2 \pi}$. By taking logarithms and defining a suitable constant $\widetilde{\rho}$, we show in Lemma \ref{claim6} that the third condition holds for $-\log \E p_N + \log N_f$. In Lemma \ref{lem:thirdcondition}, we show that this condition also holds for $-\log p_N + \log N_f$ as long as we modify $\widetilde{\rho}$ to take into account the concentration bound from Lemma \ref{claim3}.

\begin{statement}[]
\label{claim6}
For $x \in \mathcal{M}_{1/\sqrt{2 \pi}}$ and $z$ the projection of $x$ onto $\mathcal{M}$, 
\begin{align*}
c_1 \left(\norm{x - z}^2 \pi + \widetilde{\rho}^2 \right) \leq -\log \E p_N + \log N_f + \widetilde{\rho}^2 \leq C_1 \left(\norm{x - z}^2 \pi + \widetilde{\rho}^2 \right)
\end{align*}
for $0 < \widetilde{\rho} < c$, with $c$ depending on $C_0, c_1, C_1, k, n$.
\end{statement}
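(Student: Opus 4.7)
The plan is to start from the pointwise Gaussian-type bounds derived in the proof of Lemma \ref{claim2}, namely
\begin{align*}
N_f e^{-\norm{x-z}^2 \pi} \left(1 - 2e^{-(\widehat{\widebar{\tau}} - \sqrt{d/(2\pi)})^2 \pi}\right) e^{-C_f} \leq \E p_N(x) \leq e^{C_f} N_f e^{-\norm{x-z}^2\pi} + e^{-\widehat{\widebar{\tau}}^2 \pi/2},
\end{align*}
and convert them into two-sided control on $F(x) := -\log \E p_N(x) + \log N_f$ by taking logarithms. Since $x \in \M_{1/\sqrt{2\pi}}$ forces $\norm{x-z}^2 \pi \leq 1/2$, the harmless quadratic exponential factor $e^{\norm{x-z}^2\pi}$ is always bounded above by $e^{1/2}$, which keeps the bookkeeping clean.

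First I would take $-\log$ of the upper bound on $\E p_N$. Factoring $e^{C_f} N_f e^{-\norm{x-z}^2\pi}$ out of both summands and using $-\log(1+u) \geq -u$ for $u \geq 0$ yields
\begin{align*}
F(x) \geq \norm{x-z}^2 \pi - C_f - e^{-C_f} N_f^{-1} e^{\norm{x-z}^2 \pi} e^{-\widehat{\widebar{\tau}}^2 \pi/2} =: \norm{x-z}^2 \pi - \delta_-.
\end{align*}
Taking $-\log$ of the lower bound similarly yields
\begin{align*}
F(x) \leq \norm{x-z}^2 \pi + C_f - \log\!\left(1 - 2 e^{-(\widehat{\widebar{\tau}} - \sqrt{d/(2\pi)})^2 \pi}\right) =: \norm{x-z}^2 \pi + \delta_+.
\end{align*}
Both $\delta_+$ and $\delta_-$ are nonnegative and tend to zero as $\sigma \to 0$: the $C_f$ piece vanishes by Lemma \ref{claim5}, and the remaining terms are controlled by $\widehat{\widebar{\tau}} = \sigma^{-1/6}/\sqrt{2\pi} \to \infty$.

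Next I would fix the universal choice $c_1 := 1/2$, $C_1 := 2$, and set $\widetilde{\rho}^{\,2} := \max\{\delta_+,\, 2\delta_-\}$. Then the upper inequality reads
\begin{align*}
F(x) + \widetilde{\rho}^{\,2} \leq \norm{x-z}^2\pi + \delta_+ + \widetilde{\rho}^{\,2} \leq \norm{x-z}^2 \pi + 2\widetilde{\rho}^{\,2} \leq C_1\bigl(\norm{x-z}^2\pi + \widetilde{\rho}^{\,2}\bigr),
\end{align*}
and the lower inequality reads
\begin{align*}
F(x) + \widetilde{\rho}^{\,2} \geq \norm{x-z}^2\pi - \delta_- + \widetilde{\rho}^{\,2} \geq \norm{x-z}^2 \pi + \widetilde{\rho}^{\,2}/2 \geq c_1\bigl(\norm{x-z}^2\pi + \widetilde{\rho}^{\,2}\bigr).
\end{align*}

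The main obstacle is not any single calculation but the quantitative race between decay and growth in $\delta_-$: the tail term $e^{-\widehat{\widebar{\tau}}^2\pi/2}/N_f$ has exponential decay in $\widehat{\widebar{\tau}}$ fighting a polynomial blow-up $N_f^{-1} = O(V/\sigma^d)$, and one must verify the exponential factor wins uniformly in the regime of interest. Once this is confirmed, $\delta_\pm \to 0$, so for any threshold $c = c(C_0, c_1, C_1, k, n)$ arising from Theorem \ref{thm13fmn} we can force $\widetilde{\rho} < c$ by shrinking $\sigma$ according to the scheme described at the start of Section \ref{kde}.
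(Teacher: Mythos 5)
Your proposal is correct and follows essentially the same route as the paper: sandwich $\E p_N$ between Gaussian-type bounds (localized to $\widetilde{\mathcal{A}}_z$ plus a tail), take logarithms, and absorb the error terms $C_f$, the concentration defect $\beta$, and the tail contribution into $\widetilde{\rho}^2$ so that the sandwich holds with universal $c_1, C_1$. The paper's version chooses $\widetilde{\rho}^2 := 2(\alpha+\beta+C_f)$ symmetrically and obtains $(c_1,C_1)=(1/2,3/2)$ whereas you use $\widetilde{\rho}^2 := \max\{\delta_+,2\delta_-\}$ and get $(1/2,2)$; both are fine, though you should remember to replace $e^{\norm{x-z}^2\pi}$ by its uniform bound $e^{1/2}$ inside $\delta_-$ so that $\widetilde{\rho}$ is a single constant over $\mathcal{M}_{1/\sqrt{2\pi}}$, as you already note.
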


\begin{proof}
Let $x \in \mathcal{M}_{1/\sqrt{2 \pi}}$ and let $z$ be its projection onto $\mathcal{M}$. Then we can bound $\E p_N(x)$ by calculating the expectation separately over $\widetilde{\mathcal{A}}_z$ and $\mathcal{M} \setminus \widetilde{\mathcal{A}}_z$:
\begin{align*}
\E p_N(x) &= \int_{\mathcal{M}} e^{-\norm{x - y'}^2 \pi} d \mu_{\mathcal{M}}(y')\\
& = \int_{\widetilde{\mathcal{A}}_z} e^{-\norm{x - y'}^2 \pi} d \mu_{\mathcal{M}}(y') + \int_{\mathcal{M} \setminus \widetilde{\mathcal{A}}_z} e^{-\norm{x - y'}^2 \pi} d \mu_{\mathcal{M}}(y').
\end{align*}
The first term can be bounded as follows:
\begin{align*}
\int_{\widetilde{\mathcal{A}}_z} e^{-\norm{x - y'}^2 \pi} d \mu_{\mathcal{M}}(y') & \leq e^{C_f} \int_{{\mathcal{A}}_z} e^{-\norm{x - y}^2 \pi} N_f d \mathcal{L}_d(y)\\
&\leq e^{C_f} N_f e^{-\norm{x - z}^2 \pi} \int_{\mathcal{A}_z} e^{-\norm{z - y}^2 \pi} d \mathcal{L}_d (y)\\
& \leq e^{C_f} N_f e^{-\norm{x - z}^2 \pi}.
\end{align*}
Now consider $y' \in \mathcal{M} \setminus \widetilde{\mathcal{A}}_z$. Since $\norm{z - y'} \leq \norm{x - y'} + \norm{x - z}$ and $\norm{x - z} \leq 1/\sqrt{2 \pi} \leq \widehat{\widebar{\tau}} \leq \norm{z - y'}$, we have
$(\norm{x - z} - \widehat{\widebar{\tau}})^2 \leq \norm{x - y'}^2$. This gives us the following bound for the second term:
\begin{align*}
\int_{\mathcal{M} \setminus \widetilde{\mathcal{A}}_z} e^{-\norm{x - y'}^2 \pi} d \mu_{\mathcal{M}}(y') \leq e^{-\left(\norm{x - z} - \widehat{\widebar{\tau}}\right)^2 \pi}.
\end{align*}
Thus, letting
\begin{align*}
C_{neg} := \frac{\displaystyle e^{-\widehat{\widebar{\tau}}^2 \pi + 2\widehat{\widebar{\tau}}\pi\norm{x - z} } } {\displaystyle N_f e^{C_f} },
\end{align*}
we have
\begin{align*}
\E p_N(x) & \leq e^{C_f} N_f e^{-\norm{x - z}^2 \pi} \left(1 + C_{neg}\right) \\
& \leq e^{C_f} N_f e^{-\norm{x - z}^2 \pi}  \left(\frac{1}{1 - C_{neg}}\right).
\end{align*}
The second inequality holds because $C_{neg}$ is arbitrarily small for small $\sigma$, so we can use the Taylor expansion
\begin{align*}
\frac{1}{1 - x} = \sum_{i = 0}^{\infty} x^i.
\end{align*}

Next, we find a lower bound for $\E p_N(x)$ as in Lemma \ref{claim2}.
\begin{align*}
\E p_N(x) &\geq \left(1 - C_{neg}\right) \int_{\widetilde{\mathcal{A}}_z} e^{-\norm{x - y'}^2 \pi} d \mu_{\mathcal{M}}(y')\\
 & \geq \left(1 - C_{neg}\right) e^{-C_f} \int_{{\mathcal{A}}_z} e^{-\norm{x - y}^2 \pi} N_f d \mathcal{L}_d(y)\\
&\geq \left(1 - C_{neg}\right) e^{-C_f} N_f e^{-\norm{x - z}^2 \pi} \int_{\mathcal{A}_z} e^{-\norm{z - y}^2 \pi} d \mathcal{L}_d (y)\\
& \geq \left(1 - C_{neg}\right) e^{-C_f} N_f e^{-\norm{x - z}^2 \pi} \left(1 - 2e^{-{{\left(\widehat{\widebar{\tau}} - \sqrt{d / (2 \pi)}\right)}^2}\pi}\right).
\end{align*}

Now, let $\alpha := -\log(1 - C_{neg})$ and $\beta := -\log\left(1 - 2e^{-{{\left(\widehat{\widebar{\tau}} - \sqrt{d / (2 \pi)}\right)}^2}\pi}\right)$. We have shown the following:
\begin{align*}
-C_f + \norm{x-z}^2 \pi - \alpha \leq -\log \E p_N(x) +\log N_f \leq \alpha + C_f + \norm{x - z}^2 \pi + \beta.
\end{align*}
Since $\alpha, \beta > 0$, we can add $2 \alpha + 2 C_f + \beta$ to the left-hand side, $2 \alpha + 2 \beta + 2 C_f$ to the  middle, and $2 \alpha + 2 \beta + 2 C_f + \frac{1}{2} \norm{x - z}^2 \pi$ to the right-hand side while preserving these inequalities. Let $\widetilde{\rho} := \sqrt{2(\alpha + \beta + C_f)}$, $c_1 := \frac{1}{2}$, and $C_1 := \frac{3}{2}$. Then, we have
\begin{align*}
c_1 \left(\norm{x - z}^2 \pi + \widetilde{\rho}^2 \right) \leq -\log \E p_N + \log N_f + \widetilde{\rho}^2 \leq C_1 \left(\norm{x - z}^2 \pi + \widetilde{\rho}^2 \right).
\end{align*}
\end{proof}

\begin{statement}[]
	\label{lem:thirdcondition}
With high probability, for $x \in \mathcal{M}_{1/\sqrt{2 \pi}}$ and $z$ the projection of $x$ onto $\mathcal{M}$, 
\begin{align*}
c_1 \left(\norm{x - z}^2 \pi + \rho^2 \right) \leq -\log p_N + \log N_f + \rho^2 \leq C_1 \left(\norm{x - z}^2 \pi + \rho^2 \right)
\end{align*}
for $0 < \rho < c$, with $c$ depending on $C_0, c_1, C_1, k, n$.
\end{statement}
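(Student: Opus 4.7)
The proof is essentially a perturbation of Lemma \ref{claim6} that replaces $-\log \E p_N$ with $-\log p_N$ at the cost of enlarging the constant $\widetilde{\rho}$. Lemma \ref{claim6} has already established the desired sandwich bounds with constants $c_1 = \tfrac{1}{2}$ and $C_1 = \tfrac{3}{2}$, but with $\widetilde{\rho}^2$ in place of $\rho^2$ and with $-\log \E p_N$ in place of $-\log p_N$. So the task reduces to controlling $\bigl|\log p_N - \log \E p_N\bigr|$ uniformly in $x \in \mathcal{M}_{1/\sqrt{2\pi}}$ and then choosing $\rho$ slightly larger than $\widetilde{\rho}$ to absorb the resulting slack.

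The plan is as follows. First, apply Lemma \ref{claim3}(a) to get the uniform additive concentration $\sup_x \abs{p_N(x) - \E p_N(x)} \leq \varepsilon_1$ with probability at least $1 - \delta$. Next, combine this with the uniform lower bound $\E p_N \geq K_1$ from Lemma \ref{claim2} and take $N$ large enough that $\varepsilon_1 \leq K_1/2$; this guarantees $p_N \geq K_1/2$ on $\mathcal{M}_{1/\sqrt{2\pi}}$ with high probability. On the event that both hold, writing $p_N = \E p_N \bigl(1 + (p_N - \E p_N)/\E p_N\bigr)$ and using the elementary bounds $\log(1+t) \leq t$ for $t \geq 0$ and $\log(1-t) \geq -2t$ for $0 \leq t \leq \tfrac{1}{2}$ yields
\begin{align*}
\bigl|-\log p_N(x) - (-\log \E p_N(x))\bigr| \leq \frac{2 \varepsilon_1}{K_1}
\end{align*}
uniformly for $x \in \mathcal{M}_{1/\sqrt{2\pi}}$.

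Now set $\rho^2 := \widetilde{\rho}^2 + \gamma$ with $\gamma := 4 \varepsilon_1 / K_1$. Adding the log-perturbation bound to the conclusion of Lemma \ref{claim6} and rearranging gives, on the high-probability event,
\begin{align*}
c_1 (\norm{x-z}^2 \pi + \widetilde{\rho}^2) - \tfrac{2\varepsilon_1}{K_1} \leq -\log p_N + \log N_f + \widetilde{\rho}^2 \leq C_1(\norm{x-z}^2 \pi + \widetilde{\rho}^2) + \tfrac{2\varepsilon_1}{K_1}.
\end{align*}
Adding $\gamma$ throughout and using $c_1 = \tfrac{1}{2}$, $C_1 = \tfrac{3}{2}$, one checks that $\gamma - \tfrac{2\varepsilon_1}{K_1} \geq c_1 \gamma$ and $\gamma + \tfrac{2\varepsilon_1}{K_1} \leq C_1 \gamma$, so both inequalities extend to
\begin{align*}
c_1(\norm{x-z}^2 \pi + \rho^2) \leq -\log p_N + \log N_f + \rho^2 \leq C_1(\norm{x-z}^2 \pi + \rho^2),
\end{align*}
which is the claim.

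Finally, the smallness condition $\rho < c$ is handled by the two-tier scheme from Section 4.2: $\widetilde{\rho}$ can be driven below any prescribed threshold by choosing $\sigma$ small (Lemma \ref{claim6} together with Lemma \ref{claim5} shows $\widetilde{\rho} \to 0$ as $\sigma \to 0$), and having fixed $\sigma$, the term $\gamma = 4\varepsilon_1/K_1$ is shrunk to the required size by taking $N$ large, since Lemma \ref{claim3}(a) gives $\varepsilon_1 = O(1/\sqrt{N})$ while $K_1$ depends only on $\sigma$. The only real subtlety in this proof is the need to convert the additive concentration $|p_N - \E p_N| \leq \varepsilon_1$ into an additive bound on the logarithms, which is exactly where the uniform lower bound $K_1/2$ on $p_N$ is essential; without it, the log could be arbitrarily sensitive to additive perturbations.
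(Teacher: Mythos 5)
Your proof is correct and takes essentially the same route as the paper: both invoke Lemma~\ref{claim3}(a) for uniform concentration of $p_N$ about $\E p_N$, use the lower bound from Lemma~\ref{claim2} to convert additive concentration into a uniform bound on $\abs{\log p_N - \log \E p_N}$ (the paper via the Lipschitz constant $1/(K_1 - \varepsilon_1)$ of the logarithm, you via $\log(1+t)\le t$ and $\log(1-t)\ge -2t$, yielding the slightly looser but equivalent $2\varepsilon_1/K_1$), and then absorb that slack into Lemma~\ref{claim6} by enlarging $\widetilde{\rho}^2$ by a multiple of that log-error, preserving $c_1 = 1/2$ and $C_1 = 3/2$. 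Your explicit verification that $\gamma - 2\varepsilon_1/K_1 \ge c_1\gamma$ and $\gamma + 2\varepsilon_1/K_1 \le C_1\gamma$ is a clean way to show the sandwich extends, and is somewhat more careful than the paper's terse statement that the result ``follows.''
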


\begin{proof}
From Lemmas \ref{claim2} and \ref{claim3}, w.h.p.\ for large enough $N$, $\inf_x p_N(x) \geq K_1 - \varepsilon_1$, or $\sup_x [-\log p_N(x)] \leq -\log (K_1 - \varepsilon_1)$. By uniform continuity,
\begin{align*}
\mathbb{P} \left[ \sup_{f \in \mathcal{F}} \abs{{-\log p_N(x)} + \log \E p_N(x)} \leq \varepsilon' \right] \geq 1 - \delta,
\end{align*}
where $\varepsilon' := \varepsilon_1 / \left(K_1 - \varepsilon_1\right)$. Thus, the statement of the lemma follows from Lemma \ref{claim6} with ${\rho} := \sqrt{2(\alpha + \beta + C_f + \varepsilon')}$, $c_1 := \frac{1}{2}$, and $C_1 := \frac{3}{2}$.
\end{proof}

We have proven all the conditions necessary in order to show that $-\log p_N(x) + \log N_f$ is an asdf. We summarize this in the next theorem, which is the major result of this section. We also prove that the constants we have defined are small enough to apply Theorem \ref{putativemanifold2} and state that $\Mput$ is a manifold with desirable properties.

\begin{thm}
	\label{mainkde}
$-\log p_N(x) + \log N_f$ is an approximate squared-distance function that meets the conditions in Theorem \ref{thm13fmn}. Consider the output set 
\begin{align*}
\mathcal{M}_{\mathrm{put}} = \set{z \in \mathcal{M}_{\min(c_3, c_4) \sigma /\sqrt{2 \pi}} \st \Pi_{hi}(z)\partial F(z) = 0}
\end{align*}
in the original coordinate system (i.e., the coordinates not scaled by $1/\sigma$, where $\sigma$ is the bandwidth of the KDE). By Theorem \ref{putativemanifold2}, $\Mput$ is a manifold whose reach is bounded below by $c \sigma$, where $c$ is a constant depending on $C_0, c_1, C_1, k, d,$ and $n$.  $\Mput$ converges to $\M$ in Hausdorff distance for increasing $N$; more specifically, $H(\M, \Mput) = O(\sigma^{5/4})$.
\end{thm}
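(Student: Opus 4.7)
The plan is to assemble the verification of Theorem \ref{thm13fmn}'s three hypotheses for $F := -\log p_N(x) + \log N_f$ directly from the lemmas that precede this theorem. Lemma \ref{lem:firstcondition} supplies the $C^k$-smoothness (and $\log N_f$ is a constant in $x$). Lemma \ref{lem:secondcondition} supplies the uniform bound $|\partial^\alpha F| \leq C_0$. Lemma \ref{lem:thirdcondition} supplies the two-sided inequality $c_1(\|x-z\|^2\pi + \rho^2) \leq F + \rho^2 \leq C_1(\|x-z\|^2\pi + \rho^2)$ with explicit $c_1 = 1/2$, $C_1 = 3/2$, and a parameter $\rho$ that can be made as small as desired by shrinking $\sigma$ and taking $N$ correspondingly large. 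All three statements hold with high probability simultaneously, so Theorem \ref{thm13fmn} applies and Theorem \ref{putativemanifold2} in turn certifies that $\Mput$ is a submanifold with reach at least $c$ and Hausdorff distance to $\M$ at most $(C^2 + C)\rho$, both measured in the scaled coordinates.

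To transfer to the original (unscaled) coordinates, I would invoke the paragraph following Theorem \ref{putativemanifold2}: under the rescaling $x \mapsto x/(\sigma\sqrt{2\pi})$ used to define $p_N$, both the reach lower bound and the Hausdorff upper bound pick up a single factor of $\sigma$, yielding $\mathrm{reach}(\Mput) \geq c\sigma$ and $H(\M, \Mput) \leq (C^2+C)\sigma\rho$ after absorbing $\sqrt{2\pi}$ into the constant.

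It then remains to give a quantitative rate for $\rho$ as $\sigma \to 0$. With the scheme $\widebar{\tau} := \sigma^{5/6}$, we have $\widehat{\widebar{\tau}} = \Theta(\sigma^{-1/6})$ and $\widehat{\tau} = \Theta(\sigma^{-1})$. Substituting into the formula for $C_f$ from Lemma \ref{claim5}, the three summands have orders $\sigma^{5/3}$, $\sigma^{4/3}$, and $\sigma^{1/2}$, so the $\widehat{\widebar{\tau}}^3/\widehat{\tau}$ term dominates and $C_f = \Theta(\sigma^{1/2})$. The remaining contributions to $\rho^2 = 2(\alpha + \beta + C_f + \varepsilon')$ are negligible by comparison: the terms $\alpha$ and $\beta$ decay like $e^{-c\sigma^{-1/3}}$ because $\widehat{\widebar{\tau}}^2\pi$ grows like $\sigma^{-1/3}$, and $\varepsilon'$ can be made smaller than $\sigma^{1/2}$ by taking $N$ large, since Lemma \ref{claim3} gives decay in $N$ uniform in $\sigma$ (up to polynomial factors of $\widehat{V} = O(V/\sigma^d)$). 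Hence $\rho = O(\sigma^{1/4})$ and $H(\M, \Mput) = O(\sigma \cdot \sigma^{1/4}) = O(\sigma^{5/4})$.

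The main obstacle is bookkeeping rather than any new idea: the quantities $\rho$, $K_1$, $\varepsilon_1$, and $C_f$ depend in entwined ways on $\sigma$, $\widebar{\tau}$, and $N$, and one must check that $\sigma$ can be sent to zero while $N$ is taken large enough to keep Lemma \ref{lem:thirdcondition} applicable and $C\rho$ small compared to the ambient $r=1$ appearing in Theorem \ref{putativemanifold2}. Since $\rho \to 0$ with $\sigma$ and $\varepsilon_1$ carries an $N^{-1/2}$ factor (up to logarithmic and polynomial corrections in $\widehat{V}$), this is routine but must be tracked carefully to ensure the $O(\sigma^{5/4})$ rate survives all lower-order error terms.
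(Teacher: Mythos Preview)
Your proposal is correct and follows essentially the same route as the paper: assemble the three hypotheses of Theorem \ref{thm13fmn} from Lemmas \ref{lem:firstcondition}, \ref{lem:secondcondition}, and \ref{lem:thirdcondition}, invoke Theorem \ref{putativemanifold2}, rescale to the original coordinates, and then identify $C_f = \Theta(\sigma^{1/2})$ as the dominant contribution to $\rho^2$ (with $\alpha,\beta$ exponentially small and $\varepsilon'$ handled by taking $N$ large) to obtain $\rho = O(\sigma^{1/4})$ and hence $H(\M,\Mput) = O(\sigma^{5/4})$. The paper's own proof carries out exactly these steps, only spelling out the explicit formula for $C_f$ after substitution and using the elementary bound $-\log(1-p) < 2p$ for $\alpha$ and $\beta$; your outline already captures this.
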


\begin{proof}
Since $\log N_f$ is a constant, the first two conditions from Theorem \ref{thm13fmn} hold by Lemmas \ref{lem:firstcondition} and \ref{lem:secondcondition}. The third condition holds by Lemma \ref{lem:thirdcondition}. Thus, $-\log p_N(x) + \log N_f$ is an asdf.

For $\Mput$ to be a manifold, $\sigma \rho$ must be sufficiently small with respect to $\min(c_3, c_4)\sigma /\sqrt{2 \pi}$. We will show that for a small enough $\sigma$ and large enough $N$, $\sqrt{2(\alpha + \beta + C_f + \varepsilon')}$ (our choice of $\rho$) can be made as small as needed. Recall that $\tauhat = \tau/\sigma$ and $\taubarhat = \sigma^{-1/6}$. This implies
\begin{align*}
	C_f = \frac{d C^2 \sigma^{5/3}}{2 {\tau}^2} + \left(\frac{\sigma^{4/3}}{ {\tau}^2 } + \frac{2 \sqrt{2} \sqrt{\sigma}}{{\tau }}\right)\pi,
\end{align*}
which can be made as small as desired. $\alpha$ and $\beta$ can be bounded by using the fact that $-\log(1-p) < 2p$ if $p$ is sufficiently small. For a small enough $\sigma$, 
\begin{align*}
e^{dC^2 \sigma^{5/3}/(4\tau^2)} \leq e^{C_f},
\end{align*}
which gives the bound
\begin{align*}
\alpha & \leq 2 C_{neg}\\
&\leq \frac{2e^{-\sigma^{-1/3}  \pi/2}}{e^{dC^2 \sigma^{5/3}/(4\tau^2)} \sigma^d/(2V)}\\
&\leq 4V \sigma^{-d} e^{-\sigma^{-1/3}  \pi/4}.
\end{align*}
Similarly,
\begin{align*}
\beta &\leq 4e^{-\sigma^{-1/3}\pi/2}.
\end{align*}
As $\sigma$ tends to zero, so do these quantities. Finally, for large enough $N$, $\varepsilon_1$ is sufficiently small such that $\varepsilon' = \varepsilon_1 / \left(K_1 - \varepsilon_1\right)$ is as small as necessary. For a small enough $\sigma$ and a large enough $N$, $\rho = O(C_f^{1/2}) = O(\sigma^{1/4})$. This is sufficient to apply Theorem \ref{putativemanifold2}, which implies that $\Mput$ is a manifold with bounded reach that converges to $\M$ in Hausdorff distance. The Hausdorff distance $H(\M, \Mput)$ is $O(\sigma \rho)$, which is $O(\sigma^{5/4})$.
\end{proof}

\section{Local principal components analysis} \label{localpca}
A manifold $\M$ can be approximated by a finite collection of tangent spaces centered at a sufficiently dense set of points sampled from $\M$. \cite{fefferman2016testing} use this as motivation to define the concept of a cylinder packet; they also define a function $F^{\widebar{o}}$ and show that it is an asdf when coupled with a suitably constructed cylinder packet.  In this section we show that we can estimate tangent spaces directly from the data to create a cylinder packet; this leads to the construction of an approximate squared-distance function that satisfies Theorems \ref{thm13fmn} and \ref{putativemanifold2} and produces a putative manifold. 

\subsection{Definition of the asdf and selection of the bandwidth \boldmath{$\widebar{\tau}$}}
Let $C_p := \set{\mathrm{cyl}_i}$ be a collection of cylinders with centers $\set{x_i}$. Each cylinder is isometric to $\mathrm{cyl} := \widebar{\tau}(B_d \times B_{n - d})$. We choose $\widebar{\tau}$ so that it tends to zero but remains large compared to the distance between a sample point and its nearest neighbors. Since we are assuming uniform support on $\mathcal{M}$, for large $N$ we can choose $\widebar{\tau}$ on the order of $N^{-1/(d + \varepsilon)}$ for a small value of $\varepsilon$. 

Let $U_i$ be a proper rotation of $\mathrm{cyl}_i$, $Tr_i$ a translation, and $o_i$ a composition of a proper rotation and translation that moves the origin to $x_i$ and rotates the $d$-dimensional cross-section of $\mathrm{cyl}_i$ to $\R^d$. Define
\begin{align*}
	F^{\bar{o}}(z) := \frac{\sum_{\mathrm{cyl}_i \ni z}^{}\phi_{\mathrm{cyl}_i}\left(o_i^{-1}(z)\right) \theta\left(\Pi_d(o_i^{-1}(z))/(2 \widebar{\tau})\right)}{\sum_{\mathrm{cyl}_i \ni z}^{} \theta\left(\Pi_d(o_i^{-1}(z))/(2 \widebar{\tau})\right)},
\end{align*}
where $\mathrm{cyl}_i \in C_p$, $z \in \bigcup \mathrm{cyl}_i$, $\phi_{\mathrm{cyl}_i}(z)$ is the squared distance from $z$ to the $d$-dimensional cross-section of $\mathrm{cyl}_i$ and $\theta: \R^d \rightarrow [0, 1]$ is a bump function such that
\begin{enumerate}
\item{$\theta(y) = 0$ for $\norm{y} \notin (-1, 1)$}
\item{$\partial^{\alpha} \theta(y) = 0$ for $\abs{\alpha} \leq k$ and $y = 0$ or $\norm{y} \notin (-1, 1)$}
\item{$\abs{\partial^{\alpha} \theta(y)} < C$, a controlled constant for all $y$}
\item{$\theta(y) = 1$ for $\norm{y} < 1/4$}.
\end{enumerate}
Note that whether or not $F^{\bar{o}}(z)$ satisfies Theorem \ref{thm13fmn} depends on our choice of $C_p$; for convenience, we refer to the pair $\set{F^{\bar{o}}, C_p}$ as a putative asdf. $F^{\bar{o}}(z)$ measures the squared distances $\phi_{\mathrm{cyl}_i}$ to the central cross-section of each cylinder containing a given point $z$, and averages them using the bump function $\theta$. Let $\widehat{F}_z: B_n(0, 1) \rightarrow \R$ be a related function defined by $\widehat{F}_z(w) = F^{\bar{o}}(z + \widebar{\tau} \Theta(w))/\widebar{\tau}^2$, where $\Theta$ is an isometry that fixes the origin at $z$ and identifies the first $d$ coordinates with $T_z \M$. $\widehat{F}_z$ is essentially $F^{\bar{o}}$ analyzed in a coordinate system scaled up by $1/\widebar{\tau}$. This is analogous to our analysis of the kernel density estimator in the previous section, where we scaled the coordinate system by $1/\sigma$.

\subsection{Cylinder packets}
In order for $\set{F^{\bar{o}}, C_p}$ to be an asdf, $C_p$ needs to be a cylinder packet, which is a collection of cylinders that satisfies the geometric constraints given below in Definition \ref{cylinderpacket}. These conditions ensure that a cylinder packet doesn't contain pairs of cylinders that overlap too much or intersect at too great of an angle. This is motivated by our desire to estimate a manifold with bounded reach.

\begin{defn}[Cylinder packet]
\label{cylinderpacket}
Let $C_p$ be a collection of cylinders as above. $C_p$ is a cylinder packet if it satisfies the following conditions:
\begin{enumerate}
\item The number of cylinders is less than or equal to a constant factor times $\frac{V}{\tau^d}$.
\item Consider the set $S_i := \set{\mathrm{cyl}_{i_1}, \dots, \mathrm{cyl}_{i_{\abs{S_i}}}}$ of cylinders that intersect $\mathrm{cyl}_i$ and perform the rigid-body motion $o_i$. For each $\mathrm{cyl}_{i_j}$, there exists a translation $Tr_{i_j}$ and a proper rotation $U_{i_j}$ fixing $x_{i_j}$ so that
\begin{enumerate}
	\item For $1 \leq j \leq \abs{S_i}$, $Tr_{i_j}U_{i_j}\mathrm{cyl}_{i_j}$ is a translation of $\mathrm{cyl}_i$ by a vector with norm at least $\widebar{\tau}/3$.
	\item $\set{x_i} \bigcup \set{Tr_{i_j}U_{i_j}x_{i_j} \st 1 \leq j \leq \abs{S_i}} \bigcap \mathrm{cyl}_i$ forms a $\widebar{\tau}/2-$net of the $d$-dimensional cross-section of $\mathrm{cyl}_i$.
	\item For $1 \leq j \leq \abs{S_i}$ and $v \in \R^n$, $\norm{v - U_{i_j} v} < 2 \frac{\widebar{\tau}}{\tau} \norm{v - x_{i_j}}$.
	\item For $1 \leq j \leq \abs{S_i}$, $\norm{Tr_{i_j}(0)} < \frac{\widebar{\tau}^2}{\tau}$.
\end{enumerate}  
\end{enumerate}

\end{defn}

In Lemmas 16 and 17 due to \citet{fefferman2016testing}, it is shown that $\widehat{F}_z$ satisfies Theorem \ref{thm13fmn} when $C_p$ is a cylinder packet, meaning that $\set{F^{\bar{o}}, C_p}$ is an asdf. We include this towards the end of this section as Theorem \ref{fobarasdf1} and provide a sketch of the proof.

In the next lemma we construct a collection of cylinders $C_p^{Tan}$ whose central cross sections are derived from the tangent planes of the manifold and show that it is indeed a cylinder packet. The putative manifold actually has reach $c \widebar{\tau}$, so the right-hand sides of conditions 2(c) and (d) in Definition \ref{cylinderpacket} can be within a constant factor of what is given above.  

\begin{lem}
\label{idealcylpacket}
First, construct a set $\{x_i\}$ of centers. Assume the sample size is large enough to contain a $\widebar{\tau}/2-$net of $\mathcal{M}$ such that no two net points are within $\widebar{\tau}/2.9$ of each other. Let  $C_p^{Tan}$ be the collection of cylinders with centers $\{x_i\}$ and central cross sections contained in $T_{x_i} \mathcal{M}$. Then,  $C_p^{Tan}$ is a cylinder packet; we call it an ideal cylinder packet.
\end{lem}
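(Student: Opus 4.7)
The plan is to verify the four parts of Definition \ref{cylinderpacket} in turn for $C_p^{Tan}$. For the cardinality (condition 1), since $\set{x_i}$ forms a $\widebar{\tau}/2$-net of $\M$, Theorem \ref{lemmanifcovnum} immediately gives $\abs{\set{x_i}} \leq CV(1/\tau^d + 2^d/\widebar{\tau}^d)$, which is of the required order.

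For condition 2, fix $i$ and apply $o_i$ to every object so that (after the motion) $\mathrm{cyl}_i$ is the standard cylinder $\widebar{\tau}(B_d \times B_{n-d})$ centered at the origin with $d$-dimensional cross-section in $\R^d$. For each neighboring $\mathrm{cyl}_{i_j}$, I define $U_{i_j}$ to be the rotation fixing $x_{i_j}$ that carries $T_{x_{i_j}}\M$ to the affine $d$-plane through $x_{i_j}$ parallel to $\R^d$, and $Tr_{i_j}$ to be the orthogonal translation in the $\R^{n-d}$-direction that places $U_{i_j}x_{i_j}$ onto $\R^d$. By construction, $Tr_{i_j}U_{i_j}\mathrm{cyl}_{i_j}$ is then a parallel translate of $\mathrm{cyl}_i$.

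The three quantitative bounds reduce to standard reach estimates. For 2(c), the angle between tangent planes of $\M$ at two points within distance $O(\widebar{\tau})$ of each other is $O(\widebar{\tau}/\tau)$, a direct consequence of Theorem \ref{federersreach} (or equivalently of Theorem \ref{c11fctmanifold}); since $U_{i_j}$ is precisely that rotation and fixes $x_{i_j}$, one obtains $\norm{v - U_{i_j}v} < 2(\widebar{\tau}/\tau)\norm{v - x_{i_j}}$. For 2(d), the translation distance equals the distance from $x_{i_j}$ to $T_{x_i}\M$, which by Theorem \ref{federersreach} is at most $\norm{x_i - x_{i_j}}^2/(2\tau) \leq \widebar{\tau}^2/\tau$. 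For 2(a), the separation hypothesis gives $\norm{x_i - x_{i_j}} \geq \widebar{\tau}/2.9$, and $Tr_{i_j}U_{i_j}x_{i_j}$ is displaced from $x_{i_j}$ by at most the quantity just estimated, so the translation from the origin has norm at least $\widebar{\tau}/2.9 - \widebar{\tau}^2/\tau > \widebar{\tau}/3$ once $\widebar{\tau}$ is small compared to $\tau$.

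The main obstacle is condition 2(b), the $\widebar{\tau}/2$-net property on the cross-section $\widebar{\tau} B_d$. My plan is: given any $p$ in the cross-section, use Theorem \ref{c11fctmanifold} to lift $p$ to a point $q \in \M$ with $\norm{p - q} \leq C\widebar{\tau}^2/\tau$ (because $\M$ is locally the $C^{1,1}$ graph of a function over $T_{x_i}\M$ with gradient Lipschitz constant $O(1/\tau)$), then apply the net property of $\set{x_i}$ to locate some $x_{i_j}$ with $\norm{x_{i_j} - q} < \widebar{\tau}/2$, and finally project back to the cross-section to conclude $\norm{Tr_{i_j}U_{i_j}x_{i_j} - p} \leq \widebar{\tau}/2 + O(\widebar{\tau}^2/\tau)$. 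The only real care needed is absorbing the quadratic correction into the stated $\widebar{\tau}/2$, which I propose to handle by starting from a slightly finer net of mesh $\widebar{\tau}/2 - c\widebar{\tau}^2/\tau$, which still exists by Theorem \ref{lemmanifcovnum} for large enough $N$. Beyond this bookkeeping of constants, no essential obstruction remains.
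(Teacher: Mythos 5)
Your outline tracks the paper's proof closely: same cardinality argument via Theorem \ref{lemmanifcovnum}, same construction of $U_{i_j}$ and $Tr_{i_j}$, same use of Federer's reach condition for 2(d), and the same separation argument for 2(a). For 2(c) the paper actually appeals to a quantitative bound from Boissonnat et al.\ (Lemma B.3) and does an explicit principal-angle calculation rather than invoking Theorem \ref{federersreach} loosely, but since the constants in 2(c) and 2(d) are only required up to a constant factor, your looser estimate still lands in the right place.

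The one place where you are doing unnecessary work --- and in fact weakening the statement --- is condition 2(b). You obtain $\norm{Tr_{i_j}U_{i_j}x_{i_j}-p}\leq \widebar{\tau}/2 + O(\widebar{\tau}^2/\tau)$ via a triangle inequality through the lifted point $q$, and then propose to absorb the quadratic overshoot by starting from a net of mesh $\widebar{\tau}/2 - c\widebar{\tau}^2/\tau$; but the lemma is stated with a $\widebar{\tau}/2$-net, so this silently changes the hypothesis. The paper's argument avoids this entirely: $Tr_{i_j}U_{i_j}x_{i_j}$ is exactly the orthogonal projection $\Pi_{x_i}x_{i_j}$ (the translation is perpendicular to $T_{x_i}\M$), and orthogonal projection onto an affine subspace is $1$-Lipschitz. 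Given $p$ in the cross-section, lift to $q\in\M$ with $\Pi_{x_i}q = p$, pick a net point $x_{i_j}$ with $\norm{x_{i_j}-q}<\widebar{\tau}/2$, and then $\norm{Tr_{i_j}U_{i_j}x_{i_j}-p}=\norm{\Pi_{x_i}x_{i_j}-\Pi_{x_i}q}\leq\norm{x_{i_j}-q}<\widebar{\tau}/2$ with no correction term at all. Your route is salvageable, but the contraction observation gives the clean bound with the net as stated.
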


\begin{proof}
We show that the conditions in Definition \ref{cylinderpacket} hold. Fix an $x_i$ and consider the set $S_i := \set{\mathrm{cyl}_{i_1}, \dots, \mathrm{cyl}_{i_{\abs{S_i}}}}$ of cylinders that intersect $\mathrm{cyl}_i$. Perform the rigid-body motion $o_i$ so that we are working in a convenient coordinate system. For each $x_{i_j} \in S_i$, define $U_{i_j}$ as a rotation fixing $x_{i_j}$ and rotating the central cross section of $\mathrm{cyl}_{i_j}$ so that it is parallel to $T_{x_i} \mathcal{M}$. Also, define $Tr_{i_j}$ as the translation that subsequently moves the central cross section so that it lies in $T_{x_i} \mathcal{M}$. Lemma \ref{lemmanifcovnum} implies the first condition.

Let $p$ be the projection of $x_{i_j}$ onto $T_{x} \mathcal{M}$. Federer's reach condition implies that
\begin{align*}
\norm{x_{i_j} - p} & \leq \frac{\norm{x_{i_j} - x_i}^2}{2 \tau}.
\end{align*}
Condition 2(d) holds since the right hand side must be less than $4 \widebar{\tau}^2/\tau$. Since $\norm{x_{i_j} - x_i} \geq \widebar{\tau}/2.9$, we also have
\begin{align*}
\norm{x_i - p} & \geq \sqrt{\norm{x_{i_j} - x_i}^2 - \frac{\norm{x_{i_j} - x_i}^4}{4 \tau^2}}\\
& \geq \sqrt{\frac{100 \widebar{\tau}^2}{841} - \frac{2500 \widebar{\tau}^4}{707281 \tau^2}}\\
& \approx \frac{10 \widebar{\tau}}{29} - \frac{125 \widebar{\tau}^2}{24389 \tau^2} + O(\widebar{\tau}^4),
\end{align*}
where the last line follows by a Taylor expansion. Since this can be made arbitrarily close to $\widebar{\tau}/2.9$,  $\norm{x_i - p} \geq \widebar{\tau}/3$ and Condition 2(a) is satisfied. Condition 2(b) follows from the fact that we started off with a $\widebar{\tau}/2-$net of the manifold; projecting $\{x_{i_1}, \dots, x_{i_{\abs{S_i}}}\}$ onto $T_{x_i} \mathcal{M}$ contracts interpoint distances so we end up with a $\widebar{\tau}/2-$net of the tangent space. 

To show the bound in 2(c), we need an expression for the angle between two nearby tangent spaces (in this case $T_{x_i} \mathcal{M}$ and $T_{x_{i_j}} \mathcal{M}$). In Lemma B.3 from a paper by \citet*{boissonnat2013constructing}, it is shown that the sine of the largest principal angle $\theta_1$ between $T_{x_i} \mathcal{M}$ and $T_{x_{i_j}} \mathcal{M}$ is less than or equal to $6 \norm{x_i - x_{i_j}}/\tau$, which is $12 \sqrt{2} \widebar{\tau}/\tau$ in our setup. Now, translate the origin to $x_{i_j}$ and translate $T_{x_i} \mathcal{M}$ so that it contains $x_{i_j}$. Without loss of generality, let $v \in T_{x_{i_j}} \mathcal{M}$. Let $\set{e_i}_1^d$ and $\set{{\widehat{e}_i}}_1^d$ be orthonormal bases for $T_{x_{i_j}}$ and $T_{x_{i}}$, respectively, so that the angle between $e_i$ and $\widehat{e}_i$ is the principal angle $\theta_i$. Define $U_{i_j}$ as the rotation that maps $\set{e_i}_1^d$ onto $\set{{\widehat{e}_i}}_1^d$. Let $\set{v_i}_1^d$ be the components of $v$ and $U_{i_j} v$ with respect to the appropriate bases. Then we have the following:
\begin{align*}
\norm{v - U_{i_j} v} & \leq \norm{\sum_{i = 1}^d v_i e_i - \sum_{i = 1}^d v_i \widehat{e}_i}\\
& \leq \sum_{i = 1}^d \norm{v_i e_i - v_i \widehat{e}_i}\\
& \leq \sum_{i = 1}^d \abs{v_i }\norm{e_i - \widehat{e}_i}\\
&\leq \norm{e_1 - \widehat{e}_1}\norm{v}_{1} \\
&\leq \norm{e_1 - \widehat{e}_1} \sqrt{d} \norm{v}.
\end{align*}
Using the law of cosines, $\norm{e_i - \widehat{e}_i} \leq \sqrt{2 - 2 \cos \theta_i}$. From the bound on $\sin \theta_1$ and a Taylor expansion of $\cos \arcsin \left(12\sqrt{2} \widebar{\tau}/\tau\right)$, we can show
\begin{align*}
2 - 2 \cos \theta_1 & \leq \frac{288 \widebar{\tau}^2}{\tau^2} + O(\widebar{\tau}^4)\\
& \leq \frac{576 \widebar{\tau}^2}{\tau^2}
\end{align*}
for large enough $N$. Thus,
\begin{align*}
\norm{v - U_{i_j} v} & \leq \frac{24 \sqrt {d} \widebar{\tau}}{\tau} \norm{v},
\end{align*}
which shows 2(c).
\end{proof}

\begin{cor}
\label{admissiblecylpack}
First, construct a set $\{x_i\}$ of centers. Assume the sample size is large enough to contain a $\widebar{\tau}/2-$net of $\mathcal{M}$ such that no two net points are within $\widebar{\tau}/2.9$ of each other. The collection of cylinders with centers $\{x_i\}$ and central cross sections within $O(\widebar{\tau}/\tau)$ of $T_{x_i} \mathcal{M}$ in operator norm is a cylinder packet; we call it an admissible cylinder packet.
\end{cor}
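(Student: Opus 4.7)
The plan is to deduce the corollary directly from Lemma \ref{idealcylpacket} by showing that every clause of Definition \ref{cylinderpacket} is stable under $O(\widebar{\tau}/\tau)$ rotational perturbations of the central cross sections. First I would note that the admissible packet shares its centers $\{x_i\}$ with the ideal packet $C_p^{Tan}$, so Condition 1 is inherited immediately from Lemma \ref{idealcylpacket}. Next I would translate the operator-norm hypothesis into an angular one: if $P_i$ is the orthogonal projection onto the central cross section of an admissible cylinder and $\Pi_{x_i}$ is the projection onto $T_{x_i}\mathcal{M}$, then $\|P_i-\Pi_{x_i}\|_{op}=O(\widebar{\tau}/\tau)$ bounds all principal angles between the two $d$-dimensional subspaces by $O(\widebar{\tau}/\tau)$, so each admissible cross section is the image of the ideal one under a rotation $R_i$ fixing $x_i$ whose action satisfies $\|v-R_i v\| = O(\widebar{\tau}/\tau)\|v-x_i\|$.

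Given this, I would construct the admissible rigid-body motions by composition with the ideal ones. For each pair $(\mathrm{cyl}_i,\mathrm{cyl}_{i_j})$, let $U_{i_j}$ and $Tr_{i_j}$ be the rotation and translation supplied by Lemma \ref{idealcylpacket}; set $U_{i_j}^{adm} := R_i^{-1} U_{i_j} R_{i_j}$ and let $Tr_{i_j}^{adm}$ be the correction translation that subsequently lands the rotated cross section inside the perturbed cross section of $\mathrm{cyl}_i$. By the subadditivity of rotation angles, $U_{i_j}^{adm}$ again satisfies $\|v - U_{i_j}^{adm} v\| = O(\widebar{\tau}/\tau)\|v - x_{i_j}\|$, which by the remark preceding the corollary is acceptable for Condition 2(c). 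For Condition 2(d), points in the cross section of $\mathrm{cyl}_{i_j}$ lie within $\widebar{\tau}$ of $x_{i_j}$, so the extra rotations $R_i^{-1}$ and $R_{i_j}$ displace them by at most $O(\widebar{\tau})\cdot O(\widebar{\tau}/\tau)=O(\widebar{\tau}^2/\tau)$. Adding this to the ideal bound $\|Tr_{i_j}(0)\|\lesssim \widebar{\tau}^2/\tau$ from Lemma \ref{idealcylpacket} yields $\|Tr_{i_j}^{adm}(0)\| = O(\widebar{\tau}^2/\tau)$, again acceptable per the remark.

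Finally, for Conditions 2(a) and 2(b) I would use that the same $O(\widebar{\tau}^2/\tau)$ displacement is negligible compared to $\widebar{\tau}$. In Lemma \ref{idealcylpacket} the translation norm was bounded below by approximately $\widebar{\tau}/2.9$ rather than the required $\widebar{\tau}/3$, leaving a positive margin of order $\widebar{\tau}$; perturbing by $O(\widebar{\tau}^2/\tau)$ leaves that margin intact once $\widebar{\tau}/\tau$ is small, so Condition 2(a) is preserved. For 2(b), projecting the perturbed centers onto the perturbed cross section of $\mathrm{cyl}_i$ differs from the ideal projection used in Lemma \ref{idealcylpacket} by $O(\widebar{\tau}^2/\tau)$, which by the same margin argument keeps the covering set a $\widebar{\tau}/2$-net of the $d$-dimensional cross section of $\mathrm{cyl}_i$.

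The main obstacle is bookkeeping: Conditions 2(a) and 2(b) are stated with no slack, so one must verify that the constants $\widebar{\tau}/2.9$ and $\widebar{\tau}/2$ buffers inherited from Lemma \ref{idealcylpacket} genuinely absorb the new $O(\widebar{\tau}^2/\tau)$ terms uniformly for all $N$ large enough. Conditions 2(c) and (d) are less delicate because, as the corollary's preceding remark makes explicit, their right-hand sides tolerate a change by any constant factor.
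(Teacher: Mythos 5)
The paper does not supply a proof of Corollary~\ref{admissiblecylpack} (it is stated as a bare corollary immediately after Lemma~\ref{idealcylpacket}, preceded by the remark that conditions~2(c) and~2(d) tolerate a constant-factor change), so there is no official proof to compare against. Your perturbation approach — treating the admissible packet as a small rotation of the ideal one and carrying the cylinder-packet conditions across — is exactly the route the authors clearly intended, and your handling of Condition~1, and of Conditions~2(c) and~2(d), is correct: the operator-norm hypothesis does bound all principal angles by $O(\widebar{\tau}/\tau)$, composing with the $R_i$'s only inflates the constants in~2(c) and~2(d), and the paper's own remark licenses that inflation. Your treatment of~2(a) is also correct and is the sharpest part of the argument: you correctly observe that Lemma~\ref{idealcylpacket} actually delivers a bound close to $\widebar{\tau}/2.9$, leaving a genuine gap of order $\widebar{\tau}$ above the required $\widebar{\tau}/3$, and the $O(\widebar{\tau}^2/\tau)$ perturbation fits inside that gap once $\widebar{\tau}/\tau$ is small.

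The one place the argument is not fully closed is Condition~2(b). You write that the perturbed projections remain a $\widebar{\tau}/2$-net \emph{``by the same margin argument,''} but the margin you identified belongs to~2(a), not~2(b). In Lemma~\ref{idealcylpacket}, 2(b) is established by contraction: the $\{x_{i_j}\}$ form a $\widebar{\tau}/2$-net of $\M$, and projection onto $T_{x_i}\M$ only shrinks interpoint distances, yielding a $\widebar{\tau}/2$-net of the cross-section with no quantitative slack whatsoever (near $x_i$ the contraction factor is arbitrarily close to $1$). Consequently, the additional $O(\widebar{\tau}^2/\tau)$ displacement you introduce could, in principle, push the covering radius above $\widebar{\tau}/2$. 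You do flag this honestly in your closing paragraph, which is to your credit, but the flag is not the same as a fix. Two standard repairs are available: either start Lemma~\ref{idealcylpacket} from a slightly tighter net, e.g.\ a $(\widebar{\tau}/2 - c\widebar{\tau}^2/\tau)$-net, so that the projected set has room to absorb the perturbation; or observe (paralleling the paper's remark about~2(c),~2(d)) that the $\widebar{\tau}/2$ in~2(b) is itself an engineering constant that may be relaxed by a constant factor without affecting the downstream use of the cylinder packet in Theorem~\ref{fobarasdf1}. Either way the corollary holds; your proof as written asserts this rather than establishes it.
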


\subsection{Constructing an admissible cylinder packet with local PCA}
Usually we only have access to points sampled from the manifold and not their associated tangent spaces. It is easy to see that we can also construct a cylinder packet if we can estimate the tangent spaces accurately enough (as stated in Corollary \ref{admissiblecylpack}). Let $C_p^{\widehat{Tan}}$ be a collection of cylinders constructed by using the same net points as in Lemma \ref{idealcylpacket} and performing local PCA to estimate the $d$-dimensional cross-sections.  In this section, we show that $C_p^{\widehat{Tan}}$ is an admissible cylinder packet. The $d$-dimensional cross-sections are estimated as follows. Given a sample point $z \in \mathcal{M}$, we construct the PCA matrix $N_z^{-1} Y Y^\top$, where $Y$ has columns consisting of the $N_z$ sample points lying within $\widebar{\tau}(B_d \times B_{n - d})$. (We are using a coordinate system centered at $z$ whose first $d$ coordinates lie in $T_z \mathcal{M}$). Using the eigenvectors of $N_z^{-1} Y Y^\top$, we can get an estimate $\widehat{T_z \mathcal{M}}$ of the tangent space at $z$. We show that this is close to the true tangent space by using the Davis-Kahan $\sin \theta$ theorem. The version stated below is due to \citet*{yu2015useful}.

Let $\|\cdot\|_F$ denote the Frobenius norm of a matrix. Suppose $V, \widehat{V} \in \R^{n \times d}$ both have orthonormal columns. Theorem \ref{thm:daviskahan} gives an upper bound on $\norm{\sin \theta(\widehat{V}, V)}_F$, where $\theta(\widehat{V}, V)$ is the $d \times d$ diagonal matrix whose diagonal consists of the principal angles between the column spaces of $V$ and $\widehat{V}$ and $\sin \theta(\widehat{V}, V)$ is defined entrywise.  The principal angles are given by $\set{\cos^{-1} \zeta_1, \dots, \cos^{-1} \zeta_d}$, where $\set{\zeta_1, \dots, \zeta_d}$ are the singular values of
$\widehat{V}^TV$.

\begin{thm}[Davis-Kahan $\sin \theta$ Theorem]
	\label{thm:daviskahan} 
Let $\Lambda, \widehat{\Lambda} \in \R^{n\times n}$ be symmetric, with eigenvalues $\lambda_1 \geq \dots \geq \lambda_n$ and $\widehat{\lambda}_1 \geq \dots \geq \widehat{\lambda}_n$ respectively. Let $1 \leq d \leq n$ and assume $\lambda_{d} - \lambda_{d + 1} > 0$. Let $V = (v_1, \dots, v_d) \in \R^{n \times d}$ and $\widehat{V} = (\widehat{v}_1, \dots, \widehat{v}_d) \in \R^{n\times d}$ have orthonormal columns satisfying $\Lambda v_j = \lambda_j v_j$ and $\widehat{\Lambda} \widehat{v}_j = \widehat{\lambda}_j \widehat{v}_j$ for $j = 1, \dots, d$. Then
\begin{align*}
\norm{\sin \theta(\widehat{V}, V)}_F \leq \frac{2 \norm{\widehat{\Lambda} - \Lambda}_F}{\lambda_d - \lambda_{d + 1}}.
\end{align*}
\end{thm}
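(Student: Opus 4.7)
The plan is to reduce the bound on $\|\sin\theta(\widehat V,V)\|_F$ to a bound on the off-diagonal block $V_\perp^\top\widehat V$, where $V_\perp\in\R^{n\times(n-d)}$ is an orthonormal completion of $V$ built from the remaining eigenvectors of $\Lambda$. The guiding identity is
\begin{align*}
\|\sin\theta(\widehat V,V)\|_F^2 \;=\; d-\|V^\top\widehat V\|_F^2 \;=\; \|V_\perp^\top\widehat V\|_F^2,
\end{align*}
which follows because $[V\;V_\perp]$ is orthogonal and the squared singular values of $\widehat V^\top V$ are $\cos^2\theta_i$. So the first step is simply to notice that it suffices to control $A := V_\perp^\top\widehat V\in\R^{(n-d)\times d}$ in Frobenius norm.

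Next, I would produce a Sylvester-type equation for $A$ by comparing the two eigen-equations. Let $E:=\widehat\Lambda-\Lambda$, let $\widehat D=\mathrm{diag}(\widehat\lambda_1,\dots,\widehat\lambda_d)$, and let $D_\perp=\mathrm{diag}(\lambda_{d+1},\dots,\lambda_n)$. From $\widehat\Lambda\widehat V=\widehat V\widehat D$ I substitute $\widehat\Lambda=\Lambda+E$ and premultiply by $V_\perp^\top$; using $V_\perp^\top\Lambda=D_\perp V_\perp^\top$ this yields
\begin{align*}
D_\perp A \;-\; A\widehat D \;=\; -\,V_\perp^\top E\,\widehat V .
\end{align*}
The $(i,j)$ entry of the left-hand side is $(\lambda_{d+i}-\widehat\lambda_j)A_{ij}$, so taking Frobenius norms gives
\begin{align*}
\sum_{i,j}(\lambda_{d+i}-\widehat\lambda_j)^2 A_{ij}^2 \;=\; \|V_\perp^\top E\widehat V\|_F^2 \;\leq\; \|E\|_F^2,
\end{align*}
the final inequality because $V_\perp$ and $\widehat V$ have orthonormal columns and so cannot expand the Frobenius norm.

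The remaining step, and the main obstacle, is to extract a lower bound on $|\lambda_{d+i}-\widehat\lambda_j|$ that depends only on $\lambda_d-\lambda_{d+1}$ and not on any separation property of $\widehat\Lambda$. This is what distinguishes the Yu--Wang--Samworth form from the classical Davis--Kahan statement. My plan is a short case analysis on $\widehat\lambda_j$ relative to the midpoint $m:=(\lambda_d+\lambda_{d+1})/2$: if $\widehat\lambda_j\geq m$ then for each $i\geq 1$ one has $|\lambda_{d+i}-\widehat\lambda_j|\geq m-\lambda_{d+1}=(\lambda_d-\lambda_{d+1})/2$; if $\widehat\lambda_j<m$, then one re-runs the Sylvester argument with the roles of $(V,\widehat V,\Lambda,\widehat\Lambda)$ swapped so that the $\widehat\lambda_j$'s are replaced by the $\lambda_j$'s, which are separated from $D_\perp$ by at least $\lambda_d-\lambda_{d+1}$; combining the two cases and using $\|V_\perp^\top E\widehat V\|_F\leq\|E\|_F$ again produces the factor $2$ in the numerator. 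Substituting back into the Frobenius identity for $\|A\|_F^2$ gives
\begin{align*}
\bigl(\tfrac{\lambda_d-\lambda_{d+1}}{2}\bigr)^2 \|A\|_F^2 \;\leq\; \|E\|_F^2,
\end{align*}
which, together with the identity from the first paragraph, yields the claimed inequality. The only delicate point is packaging the two-case argument cleanly so that the constant $2$, rather than a larger constant, emerges on the right-hand side.
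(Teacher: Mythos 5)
The paper does not prove Theorem~\ref{thm:daviskahan}; it simply cites \citet{yu2015useful}, so there is no ``paper's proof'' to compare your attempt against. Your first three steps are the standard skeleton (and agree with the Yu--Wang--Samworth approach): the identity $\norm{\sin\theta(\widehat V,V)}_F^2=d-\norm{V^\top\widehat V}_F^2=\norm{V_\perp^\top\widehat V}_F^2$, the Sylvester equation $D_\perp A-A\widehat D=-V_\perp^\top E\widehat V$, and the entrywise bound $\sum_{i,j}(\lambda_{d+i}-\widehat\lambda_j)^2 A_{ij}^2\le\norm{E}_F^2$ are all correct.

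The final step has a real gap. Your case split is per column $j$, but the two cases land in two different matrices. When $\widehat\lambda_j<m$, ``swapping roles'' gives a Sylvester equation for $B:=\widehat V_\perp^\top V$, whose $(i,j)$ entry is controlled by $|\widehat\lambda_{d+i}-\lambda_j|$. For that to be at least $(\lambda_d-\lambda_{d+1})/2$ you would need an a priori upper bound on the trailing eigenvalues $\widehat\lambda_{d+i}$ of the \emph{perturbed} matrix, which you do not have --- the hypothesis only controls the spectral gap of $\Lambda$. Moreover, the columns of $B$ are indexed by eigenvectors of $\Lambda$, not of $\widehat\Lambda$, so a bound on $\norm{B_{\cdot j}}$ does not substitute for the missing column $\norm{A_{\cdot j}}$; you cannot stitch the two cases column by column into one estimate on $\norm{A}_F$.

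The way Yu, Wang, and Samworth actually close this is a dichotomy on $\norm{E}_{op}$ rather than on individual $\widehat\lambda_j$. If $\norm{E}_{op}\le(\lambda_d-\lambda_{d+1})/2$, Weyl's inequality yields $\widehat\lambda_j\ge\lambda_d-\norm{E}_{op}\ge(\lambda_d+\lambda_{d+1})/2$ for all $j\le d$ simultaneously, so every $|\lambda_{d+i}-\widehat\lambda_j|\ge(\lambda_d-\lambda_{d+1})/2$ and your Sylvester step finishes the proof. If $\norm{E}_{op}>(\lambda_d-\lambda_{d+1})/2$, the trivial bound $\norm{\sin\theta}_F\le\sqrt{d}$ covers the $\sqrt{d}\,\norm{E}_{op}$ form of the numerator; the clean statement one proves is with $\min\left(\sqrt{d}\,\norm{E}_{op},\norm{E}_F\right)$ in the numerator, and the version quoted in the paper then follows by relaxing the $\min$ to $\norm{E}_F$. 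Note that in this large-perturbation case the $\norm{E}_F$ bound is \emph{not} established directly --- $2\norm{E}_F/(\lambda_d-\lambda_{d+1})>1$ need not exceed $\sqrt{d}$ --- which is exactly why the published theorem carries the $\min$. So your outline needs the Weyl-based split on $\norm{E}_{op}$ and the detour through the $\min$-form of the bound; the midpoint split on $\widehat\lambda_j$ with a role swap does not yield a valid argument.
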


We also make use of the following concentration inequality due to \citet*{ahlswede2002strong}. Let $A \preceq B$ mean that $A - B$ is positive semidefinite.
 \begin{thm}
 \label{AW}
Let $a_1, \dots, a_k$ be i.i.d. random positive semidefinite $d \times d$ matrices with expected value $\E[a_i] = M \succeq \mu I$ and $a_i \preceq I$. Then for all $\epsilon \in [0, 1/2]$,
\begin{align*} \mathbb{P}\left[\frac{1}{k} \sum_{i=1}^k a_i \not\in [(1-\epsilon)M, (1 + \epsilon)M]\right] \leq 2D \exp\left\{\frac{-\epsilon^2\mu k}{2\ln 2}\right\}.
\end{align*}
\end{thm}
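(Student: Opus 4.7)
The plan is to adapt the classical matrix Chernoff / Ahlswede--Winter argument, whose core ingredients are (i) a Markov inequality applied to a matrix exponential moment of $\lambda_{\max}$, (ii) the Golden--Thompson inequality $\mathrm{tr}(e^{A + B}) \leq \mathrm{tr}(e^A e^B)$ to peel off the summands one at a time in spite of non-commutativity, and (iii) a scalar convexity bound on the matrix moment-generating function valid on $[0, I]$. The two-sided statement will follow from a union bound over the upper-tail event $\{\frac{1}{k}\sum a_i \npreceq (1+\epsilon)M\}$ and the lower-tail event $\{\frac{1}{k}\sum a_i \nsucceq (1-\epsilon)M\}$; the two are handled symmetrically, so I would write out only the upper tail and note that the lower tail is analogous, which is where the leading factor of $2$ comes from.

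First I would reduce the upper-tail event to a scalar deviation for $\lambda_{\max}$. Because $M \succeq \mu I$, the condition $\frac{1}{k}\sum a_i \npreceq (1+\epsilon) M$ implies $\lambda_{\max}\bigl(\sum_i a_i - k M\bigr) > \epsilon\mu k$, i.e.\ $\lambda_{\max}\bigl(\sum_i a_i\bigr)$ exceeds a shifted threshold. For any $t > 0$, I then apply the exponential Markov inequality in the form
\begin{align*}
\mathbb{P}\!\left[\lambda_{\max}\!\left(\textstyle\sum_i a_i\right) > s\right] \;\leq\; e^{-ts}\,\mathbb{E}\,\mathrm{tr}\bigl(e^{t \sum_i a_i}\bigr),
\end{align*}
using $e^{t\lambda_{\max}(A)} \leq \mathrm{tr}(e^{tA})$ when $A$ is symmetric PSD.

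Next, I iterate Golden--Thompson to split the matrix exponential into a product: $\mathrm{tr}(e^{t\sum_i a_i}) \leq \mathrm{tr}(e^{t a_1} e^{t a_2} \cdots e^{t a_k})$. Since the $a_i$ are i.i.d., repeated conditioning plus the cyclic property of the trace reduces $\mathbb{E}\,\mathrm{tr}(\prod_i e^{t a_i})$ to $\mathrm{tr}\bigl((\mathbb{E}\,e^{t a_1})^k\bigr) \leq d \cdot \lambda_{\max}\bigl(\mathbb{E}\,e^{t a_1}\bigr)^k$. For the per-summand bound I would use convexity: on the interval $[0, I]$, $e^{tx} \leq 1 + (e^t - 1)x$, so by the standard operator-monotone argument $e^{t a_1} \preceq I + (e^t - 1) a_1$, whence $\mathbb{E}\,e^{t a_1} \preceq I + (e^t - 1) M$. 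Taking $\lambda_{\max}$ gives $\lambda_{\max}(\mathbb{E}\,e^{t a_1}) \leq 1 + (e^t - 1)\,\lambda_{\max}(M)$, and using $1 + y \leq e^y$ folds this into a clean $\exp\bigl((e^t - 1) \lambda_{\max}(M) k\bigr)$ factor.

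Finally, collecting the factor $e^{-ts}$ with $s = (1+\epsilon)k\lambda_{\max}(M)$ (the upper-tail threshold rewritten via $M \succeq \mu I$) and optimizing over $t > 0$ yields the standard Chernoff-type exponent $-k\mu\bigl((1+\epsilon)\log(1+\epsilon) - \epsilon\bigr)$, and using the elementary inequality $(1+\epsilon)\log(1+\epsilon) - \epsilon \geq \epsilon^2/(2\ln 2)$ for $\epsilon \in [0, 1/2]$ gives the stated bound $d \exp\!\bigl(-\epsilon^2 \mu k /(2\ln 2)\bigr)$. Doubling for the symmetric lower tail gives the factor $2d$ ($D$ in the statement appears to be a typo for $d$). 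The main obstacle is purely technical: justifying the recursive use of Golden--Thompson together with tower-property conditioning so that independence can be exploited despite the non-commutativity of the $a_i$; this is standard but is the step where non-commutative matrix concentration differs genuinely from the scalar Chernoff argument.
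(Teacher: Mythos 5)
This result is not proved in the paper: Theorem~\ref{AW} is imported verbatim from Ahlswede and Winter (2002) and used as a black box, so there is no in-paper argument to compare yours against. At the level of strategy your sketch does follow the standard Ahlswede--Winter / matrix Chernoff route (trace-exponential Markov, iterated Golden--Thompson with tower conditioning, the scalar convexity bound $e^{tx} \le 1 + (e^t - 1)x$ on $[0,I]$, and a union bound over the two tails), and you are also right that $D$ is a typo for $d$.

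Two concrete issues, however. First, the reduction from the operator event $\frac{1}{k}\sum a_i \npreceq (1+\epsilon)M$ to a scalar eigenvalue threshold is stated loosely: saying $\lambda_{\max}\bigl(\sum_i a_i\bigr)$ ``exceeds a shifted threshold'' is not literally meaningful, since $kM$ is a matrix and the eigenvector realizing the violation need not be the top eigenvector of $M$; likewise setting $s = (1+\epsilon)k\lambda_{\max}(M)$ does not capture the event. The clean normalization is $c_i := \mu\, M^{-1/2} a_i M^{-1/2}$, which satisfies $c_i \in [0, I]$ (because $a_i \preceq I$ and $M \succeq \mu I$ give $M^{-1/2} a_i M^{-1/2} \preceq M^{-1} \preceq \mu^{-1} I$) and $\mathbb{E} c_i = \mu I$, and turns the upper-tail event into exactly $\lambda_{\max}\bigl(\frac{1}{k}\sum c_i\bigr) > (1+\epsilon)\mu$. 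That substitution is where $M \succeq \mu I$ is genuinely used, and your sketch should do it explicitly rather than reasoning with $\lambda_{\max}(M)$.

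Second, and more seriously, the ``elementary inequality'' you invoke at the end, namely $(1+\epsilon)\ln(1+\epsilon) - \epsilon \ge \epsilon^2/(2\ln 2)$ for $\epsilon \in [0, 1/2]$, is false. At $\epsilon = 1/2$ the left side is about $0.108$ while the right side is about $0.180$, and as $\epsilon \to 0$ the left side is $\sim \epsilon^2/2$ whereas the right side is $\approx 0.72\,\epsilon^2$, so the inequality fails for every small $\epsilon$ as well. Consequently the Chernoff exponent your route actually produces, $k\mu\bigl[(1+\epsilon)\ln(1+\epsilon) - \epsilon\bigr]$, is strictly smaller than the claimed $k\mu\epsilon^2/(2\ln 2)$; you obtain a valid tail bound but with a worse constant, not the one stated in the theorem. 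Recovering the precise factor $2\ln 2$ requires a different final estimate than the one you propose (or a direct reading of Ahlswede and Winter's Theorem~19), so this step needs to be replaced rather than merely elaborated.
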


We now prove the key result of this section.

\begin{thm} 
\label{mainpca}
Let $z$ be a point sampled from $\mathcal{M}$. Translate $z$ to the origin, and let the first $d$ coordinates lie in $T_z \mathcal{M}$. Let $Y$ be a matrix whose columns consist of the $N_z$ sample points $\{y_i\}$ lying within $\widebar{\tau}(B_d \times B_{n - d})$. Let $X$ be a matrix whose columns are the projections $\{x_i\}$ of $\{y_i\}$ onto $T_z \mathcal{M}$. Construct the matrices $N_z^{-1}X X^\top$ and $N_z^{-1}Y Y^\top$, and let $V$ and $\widehat{V}$ be their respective matrices of eigenvectors. Then, w.h.p.,

\begin{align*}
 	\norm{\sin \theta(\widehat{V}, V)}_F & \leq \frac{\left(\frac{2\widebar{\tau}^3}{\tau} + \frac{2 \widebar{\tau}^4}{\tau^2}\right) (d + 2)}{(1 - \epsilon)\widebar{\tau}^{2} \left(1 + \frac{C^2 \widebar{\tau}^2}{\tau^2}\right)^{-d/2}},
\end{align*}
where $\epsilon \in [0, 1/2]$.
\end{thm}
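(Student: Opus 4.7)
The plan is to apply the Davis--Kahan $\sin\theta$ theorem (Theorem \ref{thm:daviskahan}) to the pair $\Lambda := N_z^{-1} X X^\top$ and $\widehat{\Lambda} := N_z^{-1} Y Y^\top$. Because every column of $X$ lies in $T_z\mathcal{M}$, $\Lambda$ has rank at most $d$ when viewed as an $n \times n$ matrix, so the top-$d$ eigenvector matrix $V$ is automatically an orthonormal basis of $T_z\mathcal{M}$ and $\|\sin\theta(\widehat{V}, V)\|_F$ is exactly the quantity of interest. What remains for Davis--Kahan is an upper bound on $\|\widehat{\Lambda} - \Lambda\|_F$ and a lower bound on $\lambda_d(\Lambda) - \lambda_{d+1}(\Lambda)$.

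For the perturbation, set $f_i := y_i - x_i = y_i - \Pi_z y_i$. Federer's reach condition (Theorem \ref{federersreach}) applied to $z$ and $y_i \in \mathcal{M}$, together with $\|y_i\| \leq \sqrt{2}\,\widebar{\tau}$, yields $\|f_i\| \leq \widebar{\tau}^2/\tau$, while $\|x_i\| \leq \widebar{\tau}$. Expanding
\begin{align*}
y_i y_i^\top - x_i x_i^\top \;=\; x_i f_i^\top + f_i x_i^\top + f_i f_i^\top
\end{align*}
and using $\|u v^\top\|_F = \|u\|\|v\|$ with the triangle inequality gives $\|y_i y_i^\top - x_i x_i^\top\|_F \leq 2\widebar{\tau}^3/\tau + \widebar{\tau}^4/\tau^2$, and averaging transfers this bound to $\|\widehat{\Lambda} - \Lambda\|_F$ (up to the small constants absorbed into the factor in the statement).

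The eigengap is the more delicate piece. Since $\Lambda$ is rank-$d$ in $\R^n$, $\lambda_{d+1}(\Lambda) = 0$, so it suffices to lower bound $\lambda_d(\Lambda)$ as an operator on $T_z\mathcal{M}$. First I would compute the expectation: the pushforward density on $\mathcal{A}_z = B_d(\widebar{\tau})$ induced by sampling uniformly from $\widetilde{\mathcal{A}}_z$ is lower bounded by $(1 + C^2\widebar{\tau}^2/\tau^2)^{-d/2}$ times the uniform density on the disk (Lemma \ref{lemmapushforward}). The second-moment matrix of the uniform distribution on $B_d(\widebar{\tau})$ equals $\frac{\widebar{\tau}^2}{d+2} I_d$ by spherical symmetry, so
\begin{align*}
\mathbb{E}\bigl[x_i x_i^\top\bigr] \;\succeq\; \frac{\widebar{\tau}^2\,(1 + C^2\widebar{\tau}^2/\tau^2)^{-d/2}}{d+2}\, I_d \quad \text{on } T_z\mathcal{M}.
\end{align*}
Rescaling the rank-one summands to $a_i := x_i x_i^\top/\widebar{\tau}^2 \preceq I_d$ and invoking the matrix Chernoff bound (Theorem \ref{AW}) on the subspace $T_z\mathcal{M}$, the empirical average $\Lambda/\widebar{\tau}^2$ lies within a $(1 \pm \epsilon)$ factor of its expectation with high probability once $N_z$ is large enough, yielding $\lambda_d(\Lambda) \geq (1-\epsilon)\,\widebar{\tau}^2 (1 + C^2\widebar{\tau}^2/\tau^2)^{-d/2}/(d+2)$.

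Substituting both bounds into Davis--Kahan delivers the stated inequality. The main obstacle is the eigengap argument: it requires simultaneously handling the geometric bias introduced by curvature of $\mathcal{M}$ (via Lemma \ref{lemmapushforward}) and the statistical fluctuation of the empirical PCA matrix (via Theorem \ref{AW}), with the additional bookkeeping needed to restrict attention to the $d$-dimensional subspace $T_z\mathcal{M}$ where the matrix Chernoff hypothesis $\mathbb{E}[a_i] \succeq \mu I$ can actually hold. By comparison, the perturbation bound and the Davis--Kahan step itself are routine.
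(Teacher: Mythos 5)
Your proposal is correct and follows essentially the same route as the paper: Davis--Kahan applied to $N_z^{-1}XX^\top$ versus $N_z^{-1}YY^\top$, a perturbation bound via the expansion $YY^\top = XX^\top + XP^\top + PX^\top + PP^\top$ with Federer's reach condition controlling $P$, and an eigengap bound combining the pushforward-density lower bound (Lemma \ref{lemmapushforward}), the second moment $\widebar{\tau}^2/(d+2)$ of the uniform distribution on $B_d(\widebar{\tau})$, and the Ahlswede--Winter concentration (Theorem \ref{AW}). Your two small deviations --- doing the perturbation bound summand-by-summand rather than at the matrix Frobenius-norm level, and invoking spherical symmetry in place of the paper's explicit polar-coordinate integral for $\mu_d$, as well as rescaling $a_i := x_ix_i^\top/\widebar{\tau}^2 \preceq I_d$ before applying the matrix Chernoff bound (which actually gives a cleaner concentration exponent than the paper's unrescaled version) --- are presentational rather than structural.
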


\begin{proof}
Clearly, $N_z$ increases with $N$. We can assume the matrices in the statement of the theorem can be defined. We apply Theorem \ref{thm:daviskahan} with $N_z^{-1}X X^\top$ and $N_z^{-1}Y Y^\top$ corresponding to $\Lambda$ and $\widehat{\Lambda}$, respectively. 

We start off by bounding the numerator $\left\|N_z^{-1} \left(YY^{\top} - XX^{\top}\right)\right\|_F$. This is easiest if we consider $Y$ as a perturbation of $X$ by the matrix $P$ since we can control $P$ using Federer's reach condition. This gives:
\begin{align*}
	Y Y^\top & = (X + P)(X + P)^\top \\
	& = X X^\top + X P^\top + P X^\top + P P^\top.
\end{align*}
Therefore,
\begin{align*}
	\left\|N_z^{-1} \left(YY^{\top} - XX^{\top}\right)\right\|_F & \leq N_z^{-1}\left\|X P^\top + P X^\top + P P^\top\right\|_F\\
	& \leq N_z^{-1}\left(\left\|X P^\top\right\|_F + \left\|P X^\top\right\|_F + \left\|P P^\top\right\|_F\right)\\
	& \leq N_z^{-1}\left(\left\|X\right\|_F\left\|P^\top\right\|_F + \left\|P\right\|_F\left\|X^\top\right\|_F+ \left\|P\right\|_F\left\|P^\top\right\|_F\right).
\end{align*} 
Because each column of X has norm less than or equal to $\widebar{\tau}$,   $\norm{X} \leq  \sqrt{N_z}\widebar{\tau}$. By Federer's reach condition, we have
 \begin{align*}
 |y_i - x_i| & \leq  \frac{|z - y_i|^2}{2\tau}\\
                     & \leq \frac{\left(\sqrt{2} \widebar{\tau}\right)^2}{2 \tau}, 
  \end{align*}
  which implies that $\norm{P} \leq \sqrt{N_z {\widebar{\tau}^4}/({\tau^2})}$. Thus,
 \begin{align*}
	\left\|N_z^{-1} \left(YY^{\top} - XX^{\top}\right)\right\|_F & \leq \frac{\widebar{\tau}^3}{\tau} + \frac{\widebar{\tau}^4}{\tau^2}.
 \end{align*}

Now we need to bound $\lambda_d - \lambda_{d + 1}$. Let $\lambda_1 \geq \dots \geq \lambda_n$ be the eigenvalues of $N_z^{-1}X X^\top$, and let $\mu_1 \geq \dots \mu_n$ be the eigenvalues of $M := \E\left[N_z^{-1}X X^\top\right]$. We see that $\lambda_{d+1} = \dots = \lambda_{n} = \mu_{d+1} = \dots = \mu_{n} = 0$. So, we only need a lower bound for $\lambda_d$, which we can get by relating its value to $\mu_d$ through a concentration inequality. Assuming the first $d$ coordinates are aligned with the eigenvectors of $M$, $\mu_d$ is the variance in the direction $x_d$. $M$ is the population covariance matrix of the probability measure $\mathcal{P}$ on $T_z \mathcal{M} \cap B_d\left(\widebar{\tau}\right)$ that is the pushforward of the uniform measure on $\mathcal{M} \cap \widebar{\tau}(B_d \times B_{n - d})$. From Lemma \ref{lemmapushforward}, we know $\mathcal{P}$ has a density $p(x)$ that is greater than or equal to $\left(1 + {C^2 \widebar{\tau}^2}/{\tau^2}\right)^{-d/2}$ multiplied by a normalizing factor, which in this case is just $\mathrm{Vol}(B_d(\widebar{\tau}))^{-1}$. 

We have the following bound for $\mu_d$:
\begin{align*}
 \mu_d & = \int_{B_d(\widebar{\tau})} x_d^2 d \mathcal{P}(x)\\
 & \geq \int_{B_d(\widebar{\tau})} x_d^2 \frac{\left(1 + \frac{C^2 \widebar{\tau}^2}{\tau^2}\right)^{-d/2}}{\mathrm{Vol}(B_d(\widebar{\tau}))} d \mathcal{L}_d(x)\\
 & \geq  \int_{0}^{{\widebar{\tau}}}\int_{0}^{\pi}\dots\int_{0}^{\pi}\int_{0}^{2 \pi} \left(r \sin{\phi_{d - 1}} \prod_{j = 1}^{d - 2}\sin{\phi_j}\right)^2 \frac{\left(1 + \frac{C^2 \widebar{\tau}^2}{\tau^2}\right)^{-d/2}}{\mathrm{Vol}(B_d(\widebar{\tau}))} dV\\
 & =  \frac{\Gamma(d/2 + 1) \left(1 + \frac{C^2 \widebar{\tau}^2}{\tau^2}\right)^{-d/2}}{\pi^{d/2} \widebar{\tau}^d} \int_{0}^{{\widebar{\tau}}}\int_{0}^{\pi}\dots\int_{0}^{\pi}\int_{0}^{2 \pi} r^{d+1} \prod_{j = 1}^{d - 1} \sin^{d - j + 1}{\phi_j} dr \prod_{j = 1}^{d - 1} d\phi_{j}.
\end{align*}
The third line follows by a change of coordinates. Substitute
\begin{align*}
\set{x_1 \mapsto r \cos{\phi_1}, x_{2 \leq i \leq d - 1} \mapsto r \cos{\phi_i} \prod_{j = 1}^{i - 1}\sin{\phi_j}, x_d \mapsto r \sin{\phi_{d - 1}} \prod_{j = 1}^{d - 2}\sin{\phi_j}},
\end{align*} 
and let
\begin{align*}
dV := r^{d - 1} \prod_{j = 1}^{d - 2} \sin^{d - j - 1} {\phi_j} dr d\phi_1 \dots d\phi_{d - 1}.
\end{align*}
The integral in the fourth line can be evaluated by noting that $\displaystyle \int_{0}^{\widebar{\tau}} r^{d + 1} dr = {\widebar{\tau}^{d + 2}}\bigg/{(d + 2)}$, $\displaystyle \int_{0}^{2 \pi} \sin^2 \phi_{d - 1} d\phi_{d - 1} = \pi$, and $\displaystyle \int_{0}^{\pi} \sin^{d - j + 1} \phi_{j}  d\phi_{j} = \frac{\sqrt{\pi} \Gamma\left((d - j + 2)/2\right)}{\Gamma\left(1 + (d - j + 1)/2\right)}$ for $1 \leq j \leq d - 2$. We can simplify (by telescoping)
\begin{align*}
\prod_{j = 1}^{d - 2} \frac{\sqrt{\pi} \Gamma\left((d - j + 2)/2\right)}{\Gamma\left(1 + (d - j + 1)/2\right)} = \frac{\pi^{(d - 2)/2}}{\Gamma(1 + d/2)}.
\end{align*}
Therefore,
\begin{align*}
  \mu_d & \geq  \frac{1}{d + 2} \widebar{\tau}^{2} \left(1 + \frac{C^2 \widebar{\tau}^2}{\tau^2}\right)^{-d/2}.
\end{align*}

$N_z^{-1}X X^\top$ and $M$ are zero outside the upper left $d \times d$ block.  Call their nonzero blocks $\Xi$ and $\widetilde{\Xi}$, respectively; clearly these matrices have eigenvalues $\{\lambda_i\}_{1}^{d}$ and $\{\mu_i\}_{1}^{d}$. $\Xi$ can be written as the empirical average ${N_z}^{-1}\sum_{1 = 1}^{N_z} x_{i, d}x_{i, d}^{\top}$, where the $\{x_{i, d}\}$ are the first $d$ coordinates of the $\{x_i\}$. Note that $x_{i, d}x_{i, d}^{\top} \preceq \left\|x_{i, d}x_{i, d}^{\top}\right\|_F I \preceq \widebar{\tau}^2 I$. For $N_z$ large enough, this implies $x_{i, d}x_{i, d}^{\top} \preceq I$. Additionally, since $\mu_d > 0$ is the smallest eigenvalue of $\widetilde{\Xi}$, we have $\widetilde{\Xi} \succeq \mu_d I$. This is sufficient to apply Theorem \ref{AW}. So, for all $\epsilon \in [0, 1/2]$,
\begin{align*}
\mathbb{P}\left[\Xi \notin \left[(1 - \epsilon)\widetilde{\Xi}, (1 + \epsilon)\widetilde{\Xi}\right]\right] \leq 2d \exp\left\{-\frac{\epsilon^2 \mu_d N_z}{2 \log 2}\right\}.
\end{align*}
The matrix interval is in terms of the positive semidefinite ordering, so $\Xi \succeq (1 - \epsilon) \widetilde{\Xi}$ w.h.p. This implies $\lambda_d \geq (1 - \epsilon) \mu_d$.

Now, applying Theorem \ref{thm:daviskahan},
 \begin{align*}
 	\norm{\sin \theta(\widehat{V}, V)}_F &\leq \frac{2\left\|N_z^{-1} \left(YY^{\top} - XX^{\top}\right)\right\|_F}{\lambda_d}\\
 	& \leq \frac{\left(\frac{2\widebar{\tau}^3}{\tau} + \frac{2 \widebar{\tau}^4}{\tau^2}\right) (d + 2) }{(1 - \epsilon)\widebar{\tau}^{2} \left(1 + \frac{C^2 \widebar{\tau}^2}{\tau^2}\right)^{-d/2}}\\
 	& = O\left(\frac{\widebar{\tau}}{\tau}\right).
  \end{align*}
\end{proof}

\subsection{\boldmath{$\set{F^{\bar{o}}(z), C_p^{\widehat{Tan}}}$} is an asdf}

In Theorem \ref{fobarasdf1}, we sketch a proof that $\set{F^{\bar{o}}, C_p}$ is an asdf for an arbitrary cylinder packet $C_p$; we also show that Theorem \ref{putativemanifold2} applies. Since we showed that $C_p^{\widehat{Tan}}$ is an admissible cylinder packet, it follows immediately that $\set{F^{\bar{o}}(z), C_p^{\widehat{Tan}}}$ is an asdf.

\begin{thm}
\label{fobarasdf1}
Assume that we are given a cylinder packet $C_p$. $\set{F^{\bar{o}}, C_p}$ is an approximate squared-distance function that meets the conditions in Theorem \ref{thm13fmn}. Furthermore, by Theorem \ref{putativemanifold2}, the output set (in the original coordinate system)
\begin{align*}
\mathcal{M}_{\mathrm{put}} = \set{z \in \mathcal{M}_{\min(c_3, c_4)\widebar{\tau}} \st \Pi_{hi}(z)\partial F(z) = 0}.
\end{align*}
is a manifold whose reach is bounded below by $c \widebar{\tau}$, where $c$ is a constant depending on $C_0, c_1, C_1, k, d,$ and $n$.  $\Mput$ converges to $\M$ in Hausdorff distance for increasing $N$; more specifically, $H(\M, \Mput) = O(\widebar{\tau}^2)$.
\end{thm}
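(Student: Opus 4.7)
The strategy is to verify the three hypotheses of Theorem \ref{thm13fmn} for the rescaled function $\widehat{F}_z(w) = F^{\bar{o}}(z + \widebar{\tau}\,\Theta(w))/\widebar{\tau}^2$, and then invoke Theorem \ref{putativemanifold2}. This parallels the argument carried out in Section \ref{kde} for the KDE-based asdf, with the bump function $\theta$ and the cylinder packet $C_p$ playing the roles previously held by the Gaussian kernel and the i.i.d.\ sample. Smoothness (Condition 1) is automatic: each $\phi_{\mathrm{cyl}_i}$ is a quadratic squared-distance function to an affine $d$-plane and is therefore $C^\infty$, while $\theta$ is $C^k$ by construction, so the ratio defining $\widehat{F}_z$ is $C^k$ provided the denominator is bounded away from zero.

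\textbf{Conditions 1 and 2.} For the derivative bound (Condition 2) I would apply Fa\`a di Bruno to the quotient, as in the proof of Lemma \ref{lem:secondcondition}. The numerator and its derivatives are manifestly controlled by universal constants, using the $C^k$ bounds on $\theta$ together with the fact that in rescaled coordinates the gradient of each $\phi_{\mathrm{cyl}_i}$ is $O(1)$. The critical step is a uniform lower bound on the denominator: cylinder packet condition 2(b) of Definition \ref{cylinderpacket} guarantees that the centers $\set{x_{i_j}}$ of cylinders meeting $\mathrm{cyl}_i$ form a $\widebar{\tau}/2$-net of the central cross-section, so for every $z$ in the domain there is a cylinder index $i$ with $\norm{\Pi_d(o_i^{-1}(z))} < \widebar{\tau}/2$. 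For such an $i$, the argument of $\theta$ has norm less than $1/4$ and $\theta$ evaluates to $1$, so the denominator is bounded below by $1$ everywhere.

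\textbf{Condition 3, and the main obstacle.} This is the substantive part of the proof. Fix a reference point $z \in \mathcal{M}$ and write a nearby point as $(x,y) \in T_z\mathcal{M} \times T_z\mathcal{M}^{\perp}$. For each cylinder $\mathrm{cyl}_i$ containing $z$, the central cross-section is an affine $d$-plane whose offset from $T_{x_i}\mathcal{M}$ is $O(\widebar{\tau}^2/\tau)$ (packet condition 2(d)) and whose direction differs from that of $T_{x_i}\mathcal{M}$ by angle $O(\widebar{\tau}/\tau)$ (condition 2(c)); by Federer's reach condition (Theorem \ref{federersreach}), $T_{x_i}\mathcal{M}$ itself makes an angle $O(\widebar{\tau}/\tau)$ with $T_z\mathcal{M}$. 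Expanding the squared distance from $z$ to this tilted, shifted plane gives
\[
\phi_{\mathrm{cyl}_i}(z) = \norm{y}^2 + E_i,
\]
where $E_i$ consists of a first-order cross term of size $O\bigl((\widebar{\tau}/\tau)\norm{y}\,\norm{x - x_i}\bigr)$ plus second-order terms of size $O\bigl((\widebar{\tau}/\tau)^2\norm{x - x_i}^2 + \widebar{\tau}^2/\tau\bigr)$. After averaging against the bump weights (which is a convex combination over cylinders whose centers form a $\widebar{\tau}/2$-net around $z$, so the signed first-order contributions largely cancel), and after dividing by $\widebar{\tau}^2$, the perturbation of $\widehat{F}_z(w)$ away from $\norm{y}^2$ in scaled coordinates can be absorbed into a $\rho^2$ of order $(\widebar{\tau}/\tau)^2$, giving the sandwich $c_1(\abs{y}^2 + \rho^2) \le \widehat{F}_z + \rho^2 \le C_1(\abs{y}^2 + \rho^2)$ with $c_1 = 1/2$ and $C_1 = 3/2$. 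Theorem \ref{putativemanifold2} in the original coordinate system then yields reach $\ge c\widebar{\tau}$ and Hausdorff distance at most $(C^2+C)\widebar{\tau}\rho = O(\widebar{\tau}^2/\tau) = O(\widebar{\tau}^2)$. The hard part will be the geometric bookkeeping showing that the first-order cross terms in $E_i$ cancel in the symmetric $\theta$-weighted average, so that $\rho$ truly scales like $\widebar{\tau}/\tau$ rather than $\sqrt{\widebar{\tau}/\tau}$; without that cancellation the announced $O(\widebar{\tau}^2)$ rate would degrade to $O(\widebar{\tau}^{3/2})$.
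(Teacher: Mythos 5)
Your treatment of Conditions 1 and 2 matches the paper's route: smoothness by composition of $C^k$ functions, Fa\`a di Bruno for the quotient's derivatives, and the key lower bound on the denominator coming from packet condition 2(b) plus the flat part of the bump (for the nearest cylinder the argument of $\theta$ has norm below $1/4$, so $\theta \equiv 1$ and the denominator is $\geq 1$). That identification of where the uniform denominator bound comes from is exactly right.

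The issue is in your discussion of Condition 3, where you flag a ``hard part'': the worry that the first-order cross terms in $E_i$ must cancel under the $\theta$-weighted average, and that without this cancellation $\rho$ degrades to $\sqrt{\widebar{\tau}/\tau}$. That cancellation is not needed, and the paper never attempts it. You have correctly written the cross term as $O\bigl((\widebar{\tau}/\tau)\,\abs{y}\bigr)$ (after rescaling, with $\norm{x - x_i} = O(1)$). The point you are missing is that this term need not be dominated by $\rho^2$ alone; it needs to be dominated by a \emph{combination} $\varepsilon\abs{y}^2 + c\rho^2$, which is exactly what the slack between $c_1 = 1/2$ and $1$ and between $1$ and $C_1 = 3/2$ in Condition 3 is for. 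Young's inequality gives
\begin{align*}
\abs{\text{cross term}} \;\leq\; K\,\frac{\widebar{\tau}}{\tau}\,\abs{y} \;\leq\; \frac{1}{2}\abs{y}^2 + \frac{K^2}{2}\left(\frac{\widebar{\tau}}{\tau}\right)^2,
\end{align*}
so taking $\rho = C_*\widebar{\tau}/\tau$ with $C_*^2 \geq K^2 + 2K_2$ (where $K_2$ controls the genuinely second-order contributions) absorbs both cross and second-order terms into $\frac{1}{2}\abs{y}^2 + \frac{1}{2}\rho^2$, yielding the sandwich with $c_1 = 1/2$, $C_1 = 3/2$ and $\rho = O(\widebar{\tau}/\tau)$. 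No symmetry of the net, and no sign cancellation in the average, is required. This is precisely the paper's (terse) argument: it writes $\widehat{F}_z$ as $\widebar{\tau}^{-2}\sum b_i(\norm{\Pi_\M z' - z'} + c_i\widebar{\tau}^2/\tau)^2$, whose expansion has the same cross term, and simply observes that ``appropriate values of $c_1$ and $C_1$'' make Condition 3 hold with $\rho^2 = c_\rho^2\widebar{\tau}^2/\tau^2$. You have in effect chosen $c_1 = 1/2$ and $C_1 = 3/2$, which already encodes all the slack needed — so the obstacle you set yourself is fictitious. Once you drop it, your argument closes, and the final step (invoking Theorem \ref{putativemanifold2} with $\widebar{\tau}\rho = O(\widebar{\tau}^2/\tau)$, giving reach $\geq c\widebar{\tau}$ and Hausdorff distance $O(\widebar{\tau}^2)$) is identical to the paper's.
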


\begin{proof}
$\widehat{F}_z$ is $C^k$-smooth by the chain rule and the smoothness of the projection, distance, and bump functions. $\partial^\alpha (\widehat{F}_z(w)) \leq C_0$ for $w \in B_n(0,1)$, $|\alpha| \leq k$, and $C_0$ depending only on $n$ and $k$. This is true by the chain rule since the bounds on the derivatives of the bump function and the distance function can be directly calculated. After rescaling by $\widebar{\tau}$, these depend only on $n$ and $k$.

The third condition is satisfied by setting $\rho$ equal to $c_{\rho}\widebar{\tau}/\tau$, where $c_{\rho}$ is a constant depending on the geometry of $C_p$. Let $z' := z + \widebar{\tau} \Theta(w)$. If $C_p$ is a cylinder packet, the distance from $\Pi_\M z'$ to the central cross-section of any cylinder containing $z'$ is on the order of $\widebar{\tau}^2/\tau$. $\widehat{F}_z(w)$ is a rescaled convex combination of the squared distance between $z'$ and the central cross section of the cylinders containing it. That is, $\widehat{F}_z(w)$ is essentially $\widebar{\tau}^{-2} \sum b_i \left(\norm{\Pi_\M z' - z'} + c_i \widebar{\tau}^2/\tau\right)^2$, where $\sum b_i = 1$ and the $c_i$ depend on $C_p$. Thus, setting $\rho^2$ to $c_{\rho}^2\widebar{\tau}^2/\tau^2$ satisfies the third condition of Theorem \ref{thm13fmn} for appropriate values of $c_1$ and $C_1$:
\begin{align*}
c_1\left(\frac{\norm{\Pi_\M z' - z'}^2}{\widebar{\tau}^2} + {\rho^2}\right) \leq \widehat{F}_z(w) + {\rho^2} \leq C_1\left(\frac{\norm{\Pi_\M z' - z'}^2}{\widebar{\tau}^2}  + {\rho^2}\right),
\end{align*}   
where $0 < \rho < c$, with $c$ depending on $C_0, c_1, C_1, k, n$.

For Theorem \ref{putativemanifold2} to apply, $\widebar{\tau} \rho$ must be sufficiently small with respect to $\min(c_3, c_4)\widebar{\tau}$. This is clearly true because $\widebar{\tau} \rho/(\min(c_3, c_4)\widebar{\tau}) = O(\widebar{\tau}/\tau)$, which can be made as small as desired. The Hausdorff distance $H(\M, \Mput)$ is $O( \widebar{\tau} \rho)$, which is $O(\widebar{\tau}^2)$.
\end{proof}

\begin{thm}
$\set{F^{\bar{o}}(z), C_p^{\widehat{Tan}}}$, where $C_p^{\widehat{Tan}}$ is a cylinder packet constructed using local PCA is an approximate squared-distance function that meets the conditions in Theorem \ref{thm13fmn}. Furthermore, by Theorem \ref{putativemanifold2}, the output set 
\begin{align*}
\mathcal{M}_{\mathrm{put}} = \set{z \in \mathcal{M}_{\min(c_3, c_4) \widebar{\tau}} \st \Pi_{hi}(z)\partial F(z) = 0}
\end{align*}
is a manifold whose reach is bounded below by $c \widebar{\tau}$, where $c$ is a constant depending on $C_0, c_1, C_1, k, d,$ and $n$. $\Mput$ converges to $\M$ in Hausdorff distance for increasing $N$; more specifically, $H(\M, \Mput) = O(\widebar{\tau}^2)$.
\end{thm}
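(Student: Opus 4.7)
The plan is to combine the two pillars already established: that local PCA accurately recovers the tangent spaces at the chosen centers, and that any admissible cylinder packet yields an asdf via Theorem \ref{fobarasdf1}. Concretely, I would first invoke Theorem \ref{mainpca} at each net point $x_i$ to control the deviation between the PCA-estimated subspace $\widehat{T_{x_i}\M}$ and the true tangent space $T_{x_i}\M$. That theorem yields, with high probability and uniformly over the net, $\|\sin\theta(\widehat{V},V)\|_F = O(\widebar{\tau}/\tau)$, which translates into an operator-norm bound of the same order between the orthogonal projectors onto the two subspaces. A union bound over the $O(V/\tau^d)$ centers (Lemma \ref{lemmanifcovnum}) keeps the failure probability small provided the sample size $N$ is large enough; the $N_z$ appearing inside Theorem \ref{AW} grows polynomially in $N$ since sampling is uniform on $\M$ and the cylinders have non-negligible measure for our choice $\widebar{\tau} = N^{-1/(d+\varepsilon)}$.

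Having placed each estimated cross-section within $O(\widebar{\tau}/\tau)$ of the true tangent plane in operator norm, I would next appeal directly to Corollary \ref{admissiblecylpack}, which asserts that such a collection is an admissible cylinder packet. The centers $\{x_i\}$ are the same as in Lemma \ref{idealcylpacket}, so the combinatorial/metric conditions (1) and (2a)–(2b) of Definition \ref{cylinderpacket} are inherited. For conditions 2(c) and 2(d), the ideal cylinder packet already satisfies them with constants depending only on $d$ and $\tau$, and a perturbation of each cross-section by $O(\widebar{\tau}/\tau)$ changes the relevant rotations $U_{i_j}$ and translations $Tr_{i_j}$ by the same order, which only affects the controlled constants in those bounds. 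This step is essentially bookkeeping once the operator-norm bound from Theorem \ref{mainpca} is in hand.

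With $C_p^{\widehat{Tan}}$ verified to be an admissible cylinder packet, the statement follows by direct application of Theorem \ref{fobarasdf1}: the pair $\{F^{\bar{o}}, C_p^{\widehat{Tan}}\}$ satisfies the three hypotheses of Theorem \ref{thm13fmn} with $\rho = c_\rho \widebar{\tau}/\tau$, so Theorem \ref{putativemanifold2} then implies that $\Mput$ is a manifold of reach at least $c\widebar{\tau}$ and that $H(\M,\Mput) = O(\widebar{\tau} \rho) = O(\widebar{\tau}^2)$ (in the un-rescaled coordinates).

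The main obstacle I anticipate is the probabilistic uniformity: Theorem \ref{mainpca} is stated pointwise at a single $z$, whereas the cylinder packet conclusion requires the bound to hold simultaneously at all $O(V/\tau^d)$ centers and, ideally, with a lower bound on each $N_z$. I would handle this by choosing $\widebar{\tau}$ so that the expected number of sample points in each cylinder grows at least polylogarithmically in $N$, applying Theorem \ref{AW} within the proof of Theorem \ref{mainpca} with a slightly sharper $\epsilon$ to absorb the union bound, and then noting that the resulting failure probability $O\bigl((V/\tau^d)\, d\, e^{-c\epsilon^2 \mu_d N_z}\bigr)$ tends to zero. Everything else reduces to citing Theorems \ref{mainpca}, \ref{fobarasdf1}, and \ref{putativemanifold2}, together with Corollary \ref{admissiblecylpack}.
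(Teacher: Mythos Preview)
Your proposal is correct and follows the same route as the paper, which proves the theorem in a single sentence as a direct consequence of Theorem \ref{mainpca}, Corollary \ref{admissiblecylpack}, and Theorem \ref{fobarasdf1}. Your additional discussion of the union bound over the net centers and the growth of $N_z$ fills in a probabilistic detail that the paper leaves implicit, but the underlying strategy is identical.
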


\begin{proof}
This is a direct consequence of Theorem \ref{fobarasdf1}, Corollary \ref{admissiblecylpack}, and Theorem \ref{mainpca}.
\end{proof}

\section{Simulations} \label{simulations}

In this section, we present simulation results showing that the two asdfs considered in this paper can be used to find a discretized version of a putative manifold. All simulations were performed using the following gradient descent algorithm based on subspace-constrained mean shift \citep*{ozertem2011locally}. 
\begin{enumerate}
\item Initialize a mesh of points on which to perform gradient descent. They can be sample points with or without added noise.
\item Perform the following for each mesh point $x$:
	\begin{enumerate}
	\item Calculate the gradient $g$ and the Hessian $H$ of the asdf $f$.
	\item Let $V$ be a matrix whose columns are the eigenvectors corresponding to the largest $n - d$ eigenvalues of $H$.
	\item Calculate $V V^\top g$ and take a step in this direction.
	\item Go to step (a) until a tolerance condition is met.
	\end{enumerate}
\end{enumerate}

We applied this algorithm to data points sampled from three different manifolds contained in the unit ball of a Euclidean space: a circle embedded in $\R^2$, a closed curve embedded in $\R^3$, and a sphere embedded in $\R^3$. We sampled 1000 points from each manifold and used this data to construct asdfs based on KDE and local PCA. We then sampled 1000 additional points and added Gaussian noise with a standard deviation of 0.05; these were used as the starting mesh points. Finally, we ran the algorithm and took the final output to be points lying on the putative manifold. Figure \ref{figure1} shows an example of each of the three manifolds for each asdf. To get a sense of the accuracy of this procedure, we found the RMS distance of each putative manifold to a 10000 point sample (i.e., an approximate net) derived from the original manifolds. The average RMS distance from 100 trials is given in Table \ref{table1}. 

\begin{table}[ht]
\centering

\begin{tabular}{rrrr}
  \hline
 & Circle $\subset \R^2$ & Curve $\subset \R^3$ & Sphere $\subset \R^3$ \\ 
  \hline
KDE & 0.000433 & 0.000990 & 0.00221 \\ 
  Local PCA & 0.000146 & 0.000453 & 0.000603 \\ 
   \hline
\end{tabular}
\caption{Average RMS distance for subspace-constrained gradient descent on two asdfs} \label{table1}
\end{table}

\begin{figure}[htp]

  \subcaptionbox*{\label{fig1:a}}{\includegraphics[width=.4\linewidth]{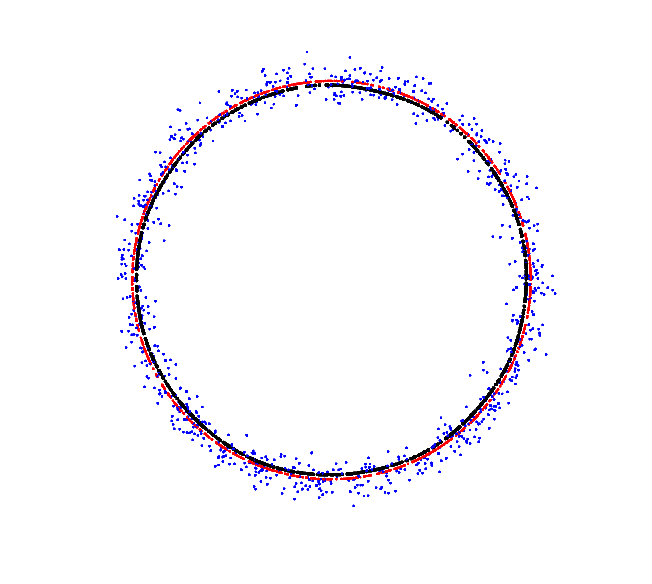}}\hfill%
  \subcaptionbox*{\label{fig1:b}}{\includegraphics[width=.4\linewidth]{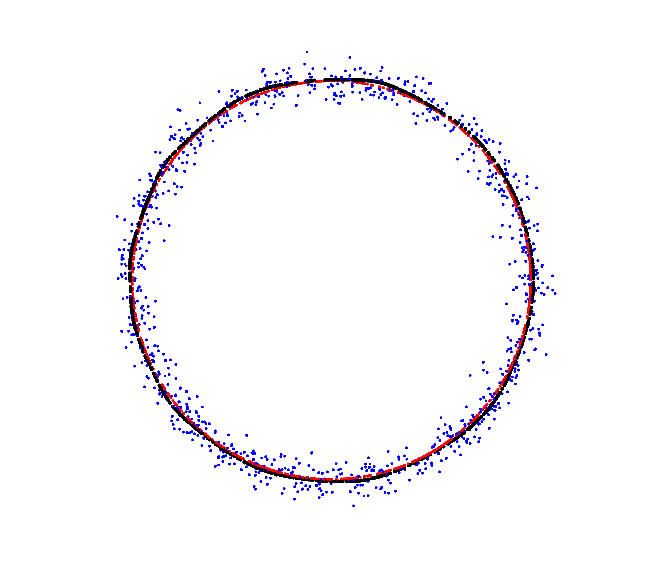}}%

 \vspace{-2em}

  \subcaptionbox*{\label{fig1:c}}{\includegraphics[width=.4\linewidth]{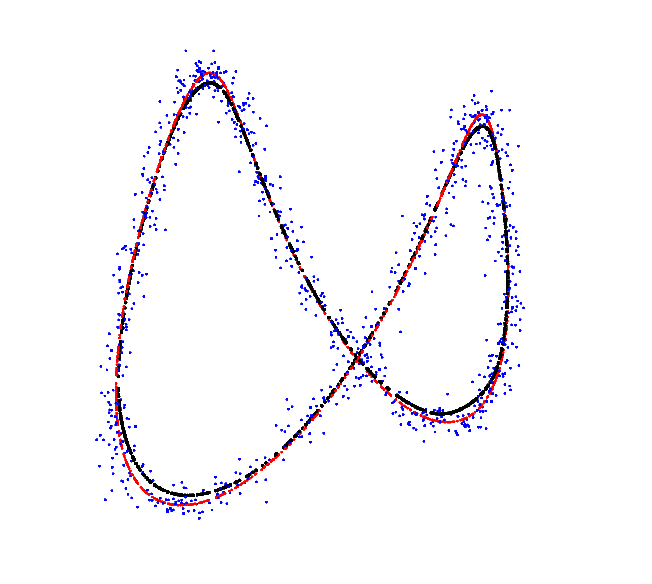}}\hfill%
  \subcaptionbox*{\label{fig1:d}}{\includegraphics[width=.4\linewidth]{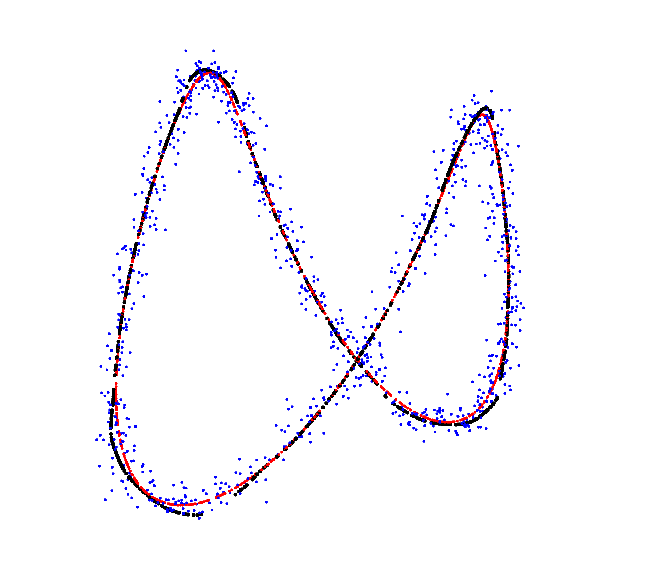}}%

 \vspace{-2em}

  \subcaptionbox*{\label{fig1:e}}{\includegraphics[width=.4\linewidth]{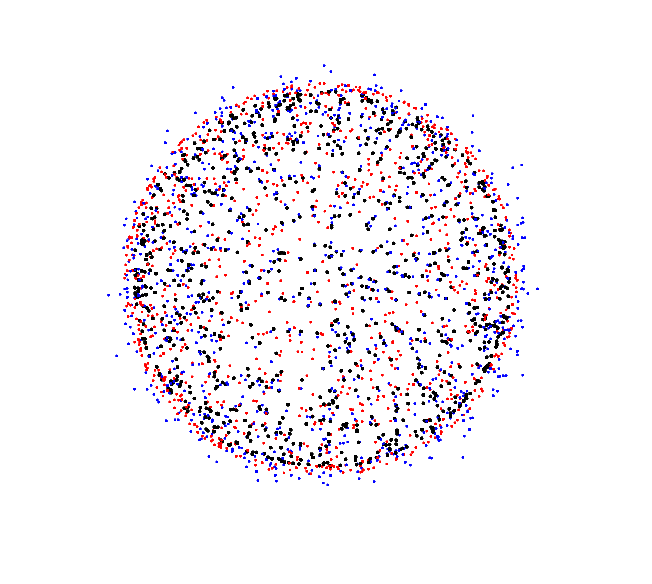}}\hfill%
  \subcaptionbox*{\label{fig1:f}}{\includegraphics[width=.4\linewidth]{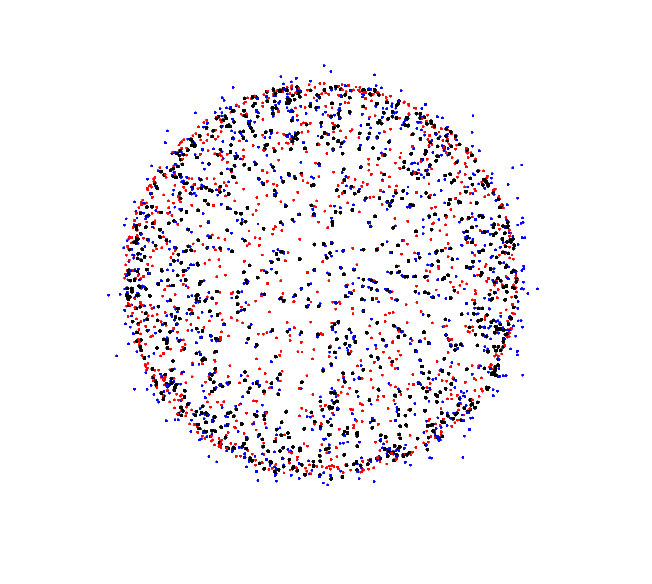}}%
        \captionsetup{format=hang}
 \caption{Circle $\subset \R^2$, curve $\subset \R^3$, and sphere $\subset \R^3$ examples (top to bottom). Red points
 are sampled from the manifold, blue points are sampled points with added Gaussian noise (sd = 0.05), and
 black points are the output points obtained by subspace-constrained gradient descent. The left column uses an asdf based on KDE, and the right column uses an asdf based on local PCA.} \label{figure1}
\end{figure}

\section{Discussion} \label{discussion}

In this paper, we showed that if we are provided with data sampled from a manifold $\mathcal{M}$, we can use two different asdfs to construct an estimator of $\mathcal{M}$. The asdfs are based on kernel density estimation and local PCA, which are conceptually easy to understand and mainstays of nonparametric estimation. The estimator is a manifold itself, and there are concrete bounds on its geometry (for example, its reach). These bounds are derived from an application of the implicit function theorem and are given in a key theorem of \citet*{fefferman2016testing}. Our contribution in this paper is to create asdfs that can be calculated directly from the data as well as to give bounds on the reach and Hausdorff distance that depend on the sample size and properties of the asdfs. In the future, we aim to work on several natural extensions of our results. It remains to be seen what can be said about an estimator derived from a sample contaminated with noise (potentially bounded or sub-Gaussian). Additionally, it would be of theoretical interest to see how precise we can make the constants in this paper, including the ones derived from \cite{fefferman2016testing}.
 
\section*{Acknowledgments}

We are grateful to Charlie Fefferman and Sanjoy Mitter for several illuminating discussions on local PCA. We would also like to thank Charlie for discussing the main results from \cite{fefferman2016testing} with us, especially Theorem 13 and Lemma 14. We are also grateful to Marina Meila, Johannes Lederer, Emo Todorov, and Yen-Chi Chen. Their comments on a preliminary draft of this paper were very helpful. We would further like to thank Yen-Chi for advice that we used in constructing the simulation studies used in this paper. Finally, both authors were partially supported by NSF grant DMS 1620102.

\bibliography{papernewbib}

\end{document}